\documentclass[11pt]{article}
\usepackage[margin=1in]{geometry}
\usepackage{amsmath,amssymb,amsthm,mathrsfs}
\usepackage{mathtools}
\usepackage{graphicx}
\usepackage[authoryear]{natbib}
\usepackage{enumitem}
\usepackage{setspace}
\usepackage{subfigure}
\usepackage{bbm}
\usepackage{bm}
\usepackage{booktabs}
\usepackage{caption}
\captionsetup{font={scriptsize}}
\RequirePackage[colorlinks,citecolor=blue,urlcolor=blue]{hyperref}
\setstretch{1.10}

\providecommand{\abs}[1]{\lvert#1\rvert}

\newtheorem{theorem}{Theorem}[section]
\newtheorem{lemma}{Lemma}[section]
\newtheorem{proposition}{Proposition}[section]
\newtheorem{corollary}{Corollary}

\newtheorem{example}{Example}
\newtheorem{remark}{Remark}[section]
\theoremstyle{definition}
\numberwithin{equation}{section}
\newtheorem{definition}{Definition}
\newtheorem{assumption}{Assumption}

\begin{document}
\noindent
\parskip=8pt
\baselineskip=16.8pt

\title{Impulse Control with Discontinuous Setup Costs: Discounted Cost Criterion\thanks{This work was supported in part by the National Natural Science Foundation of China under grant 11771432 and 11401566.}}

\author{Fen Xu\thanks{Institute of Applied Mathematics, Academy of Mathematics and Systems Science, Chinese
  Academy of Sciences, Beijing, 100190, China; School of Mathematical Sciences, University of Chinese Academy of Sciences, Beijing 100049, China  (xufen@amss.ac.cn).},\ \
 Dacheng Yao\thanks{Corresponding author. Institute of Applied Mathematics, Academy of Mathematics and Systems Science, Chinese
  Academy of Sciences, Beijing, 100190, China; Key Laboratory of Management, Decision and Information Systems, Chinese
  Academy of Sciences, Beijing, 100190, China
  (dachengyao@amss.ac.cn).},\ \  and
Hanqin Zhang\thanks{Department of Analytics $\&$ Operations, National University of Singapore, 119245, Singapore
(bizzhq@nus.edu.sg)}}

\maketitle
\begin{abstract}
This paper studies a continuous-review backlogged inventory model considered by \cite{HelmesStockbridgeZhu2015} but with discontinuous quantity-dependent setup cost for each order.
In particular, the setup cost is characterized by a two-step function and a higher cost would be charged once the order quantity exceeds a threshold $Q$. Unlike the optimality of $(s,S)$-type policy obtained by \cite{HelmesStockbridgeZhu2015} for continuous setup cost with the discounted cost criterion,
we find that, in our model, although some $(s,S)$-type policy is indeed optimal in some cases, the $(s,S)$-type policy can not always be optimal.
In particular, we show that there exist cases in which an $(s,S)$ policy is optimal for some initial levels but it is strictly worse than a generalized $(s,\{S(x):x\leqslant s\})$ policy for the other initial levels. Under $(s,\{S(x):x\leqslant s\})$ policy, it orders nothing for $x>s$ and orders up to level $S(x)$ for $x\leqslant s$, where $S(x)$ is a non-constant function of $x$. We further prove the optimality of such $(s,\{S(x):x\leqslant s\})$ policy in a large subset of admissible policies for those initial levels. Moreover, the optimality is obtained through establishing a more general lower bound theorem
which will also be applicable in solving some other optimization problems by the common lower bound approach.

\vspace{.25in}
\noindent
\textbf{Keywords:} Inventory, impulse control, quantity-dependent setup cost, $(s,S)$ policy, $(s,\{S(x):x\leq s\})$ policy

\noindent \textbf{2010 Mathematics Subject Classification:} 90B05, 93E20, 60J70, 49N25
\end{abstract}

\section{Introduction}
\label{sec:intro}
In this paper, we study the optimal ordering policy with the discounted cost criterion for a single-item inventory system in the presence of quantity-dependent setup cost and proportional cost as well as holding/backorder cost.
The inventory processes in the absence of control are modeled as solutions to a stochastic differential equation
\begin{equation}
\label{eq:X}
\mathrm{d} X(t)=-\mu\, \mathrm{d}t+\sigma\, \mathrm{d} B(t),\quad X(0)=x,
\end{equation}
where both $\mu$ and $\sigma$ are strictly positive constants.

The system manager replenishes the inventory from an outside supplier with unlimited items.
The presence of positive setup cost incurred by each order yields that the problem is an impulse control one and an ordering policy $\phi$ can be represented as a sequence of pairs $\{(\tau_n,\xi_n):n=0,1,2,\cdots\}$, in which $\tau_n$ denotes the $n$th ordering time and $\xi_n$ denotes the corresponding order quantity.
By convention, we set $\tau_0=0$ and let $\xi_0\geqslant0$ be the ordering amount at time zero ($\xi_0=0$ if no order is placed).
The inventory level process under any given impulse control policy therefore satisfies
\begin{equation}
\label{eq:Z}
Z(t)=x-\mu t +\sigma B(t)+\sum_{n=0}^{N(t)}\xi_n,
\end{equation}
where $N(t)=\max\{n\geqslant0:\tau_n\leqslant t\}$ denotes the cumulative ordering times in $(0,t]$.

Each order with quantity $\xi$ incurs a setup cost and a proportional cost $k\xi$. In particular, the setup cost is given by a two-step function
\begin{equation}
\label{eq:setup}
K(\xi)=K_1\mathbbm{1}_{\{0<\xi\leqslant Q\}}+K_2 \mathbbm{1}_{\{\xi>Q\}},
\end{equation}
where $K_2>K_1>0$ and $\mathbbm{1}_A$ is an indicator function with $\mathbbm{1}_A=1$ if $A$ is true and $\mathbbm{1}_A=0$ otherwise.
That means a high setup cost $K_2$ would be charged whenever the order quantity exceeds $Q$ and a low setup cost $K_1$ is incurred otherwise.
At the same time, the system continuously incurs a holding/backorder cost $g(z)$ when the inventory level is $z$, where $g(z)$ is a convex function.
Let $\beta>0$ be a discount factor.
Under an impulse control policy $\phi=\{(\tau_n,\xi_n):n=0,1,2,\cdots\}$ and an initial inventory level $x$, the expected present cost of holding/backorder and ordering is
\begin{align}
\label{eq:DC-cost}
\mathsf{DC}(x,\phi)=\mathbb{E}_x\Big[\int_0^{\infty} e^{-\beta t} g(Z(t))\,\mathrm{d}t+\sum_{n=0}^{\infty}e^{-\beta \tau_n}\big(K(\xi_n)+k\xi_n\big)\Big],
\end{align}
where $\mathbb{E}_x[\cdot]$ is the expectation conditioning on the initial inventory level $Z(0-)=X(0)=x$.
The objective is to find a policy $\phi^*$ to minimize \eqref{eq:DC-cost} over all ordering policies.

In the classical inventory models, a constant setup cost is usually assumed when an order is placed, and then an $(s, S)$-type policy turns out to be optimal; see e.g., \cite{Scarf1960} and \cite{Veinott1966} for periodic-review models, and \cite{Bather1966} and \cite{BensoussanLiuSethi2005} for continuous-review models. In many practical application, however, the setup costs are more complex and often depend on the order quantity. For example, when the setup costs arise from the labor costs for offloading task, depending on the size of order quantity, different amount of crews are assigned to the offloading and incur different setup costs (cf. \cite{Bigham1986} and  \cite{Caliskan-DemiragChenYang2012}).

The setup cost \eqref{eq:setup} was first studied in a periodic-review inventory model by \cite{Lippman1969}, which considered a subadditive ordering cost function including \eqref{eq:setup} as a special case and partially characterized the optimal ordering policy. Later, \cite{ChaoZipkin2008} considered setup cost \eqref{eq:setup} with $K_1=0$ and \cite{Caliskan-DemiragChenYang2012} considered \eqref{eq:setup} in a general case. Both of them studied finite-horizon periodic-review inventory models and
provided partial characterization of optimal ordering policies. The partial characterization indicates that the optimal ordering policies for periodic-review models with setup cost \eqref{eq:setup} would be very complicated.

Parallel to periodic-review inventory models, general ordering cost, including quantity-dependent setup cost, was recently studied in continuous-review inventory models.
In particular, \cite{HelmesStockbridgeZhu2015,HelmesStockbridgeZhu2017,HelmesStockbridgeZhu2018} studied continuous ordering cost functions
and proved the optimality of $(s,S)$-type policies for the inventory models with general diffusion process, under long-run average and discounted cost criterions respectively. Motivated by some real inventory problems, at the same time, discontinuous ordering cost began to attract more attention from both the optimal control community and the operations management community.
 \cite{HeYaoZhang2017} studied a general quantity-dependent setup cost and \cite{YaoChaoWu2017} studied a general piece-wise ordering cost, both in Brownian motion models. Furthermore, a very general ordering cost function was considered by  \cite{PereraJanakiramanNiu2017} in a deterministic EOQ model, and by \cite{PereraJanakiramanNiu2018} in an inventory model with renewal process demand.
Under the long-run average cost criterion, they all proved the optimality of $(s,S)$-type policies.
Thus, so far the optimality of $(s,S)$-type policy has been obtained under the long-run average cost criterion with continuous/discontinuous ordering cost function and under the discounted cost criterion but only with the continuous ordering cost function. {\it So a natural question is  whether the discontinuous ordering cost still admits an $(s,S)$-type policy to be optimal for the discounted cost criterion.}

When the initial inventory level is strictly limited to be nonnegative,  \cite{Jia2016} proved that an $(s,S)$-type policy is optimal.
Generally speaking, the total cost depends on the initial level and the initial level plays a crucial role in determining the optimal policies for the discounted cost models (compared with the average cost models); see e.g., \cite{HelmesStockbridgeZhu2015}. Unlike the nonnegative initial level case considered by \cite{Jia2016}, and the continuous ordering cost case investigated by  \cite{HelmesStockbridgeZhu2015}, this paper, however, shows that the optimal policies with arbitrary initial level and discontinuous ordering cost are very complicated and in some cases, any $(s,S)$-type policy is not always optimal.

To identify the optimal policy, a new approach is developed as follows:  First we characterize two ``good"  $(s,S)$-type policies, embodied by $(s_1,S_1)$ policy and $(s_2,S_2)$ policy, respectively. The $(s_1,S_1)$ ($(s_2,S_2)$) policy is obtained by minimizing the inventory problems \eqref{eq:DC-cost} limited in $(s,S)$-type policies but with order quantity constraint $\xi\leqslant Q$ ($\xi\geqslant Q$) and constant setup cost $K_1$ ($K_2$).
We then show that, in our original problem, if $(s_2,S_2)$ policy is better than $(s_1,S_1)$ policy for $x\geqslant \max\{s_1,s_2\}$, $(s_2,S_2)$ policy would be optimal for all initial inventory level $x\in \mathbb{R}$. Otherwise, $(s_1,S_1)$ policy is optimal only for initial inventory levels $x\in[S_1-Q,\infty)$ but is strictly worse than a generalized $(s_1,\{S^*(x):x\leqslant s_1\})$ policy for some initial levels in $(-\infty,S_1-Q)$. Under $(s_1,\{S^*(x):x\leqslant s_1\})$ policy, it orders nothing for initial level $x>s_1$ and orders up to level $S^*(x)$ for initial level $x\leqslant s_1$, where $S^*(x)$ equals $S_1$ for $S_1-Q<x\leqslant s_1$,  equals $x+Q$ for $\underline{s}\leqslant x\leqslant S_1-Q$, and equals $\bar{S}$ for $x<\underline{s}$.  The parameters $\underline{s}$ and $\bar{S}$ are determined by certain equations and satisfy $s_1-Q\leqslant \underline{s}\leqslant S_1-Q$ and $\bar{S}\geqslant S_1$.
We further show that this generalized policy is optimal in a large subset of policies when certain conditions are satisfied.

More specifically, in the first step of our approach, to characterize the $(s_i,S_i)$ policies in the constrained Brownian control problems, we solve the KKT conditions of two independent constrained nonlinear optimization problems, rather than solving the Brownian control problems with order quantity constraints directly such as in literature; see e.g., \cite{OrmeciDaiVandeVate2008} and  \cite{Jia2016}.
Using this deterministic optimization method, the connection between order quantity constraints and smooth-pasting conditions can be clearly established.

To prove the optimality of the selected policy, we generalize the lower bound theorem (also called verification theorem) in literature (cf.,  \cite{DaiYao2013b},  \cite{HarrisonSellkeTaylor1983}, and \cite{OrmeciDaiVandeVate2008}), which requires the value/cost function should be of $\mathcal{C}^1$ at all points, such that the results still hold even when the value/cost function is not of $\mathcal{C}^1$  at finite points.
The $\mathcal{C}^1$ condition is also required in the measure approach (cf.  \cite{HelmesStockbridgeZhu2017}) and the quasi-variational inequalities (QVI) approach (cf.  \cite{BensoussanLiuSethi2005} and  \cite{HelmesStockbridgeZhu2018}).
Thus, the generalized lower bound theorem established in this paper will also be applicable to other optimal control problems whose value/cost functions are not of $\mathcal{C}^1$.

The rest of this paper is organized as follows. In \S \ref{sec:model-preliminary}, we formulate our model
and give some preliminary results about two candidate $(s,S)$-type policies. In  \S \ref{sec:optimal}, we show our main results, i.e., Theorems \ref{thm1}, \ref{thm2}, \ref{thm3}, and \ref{thm4}. In \S \ref{sec:LBT}, we provide the generalized lower bound theorem, and using it,  we prove our main results in \S \ref{sec:proof}. In \S \ref{sec:condition-Q}, we
discuss a condition in Theorem \ref{thm4} by numerical studies and further show that this condition always holds when $Q$ is large.
Finally, we conclude this paper in \S \ref{sec:conclusion}, and provide some supplementary proofs in Appendix \ref{sec:appendix}.

\section{Model formulation and preliminaries}
\label{sec:model-preliminary}

\subsection{Model formulation}
We assume that $B=\{B(t):t\geqslant0\}$ in \eqref{eq:X} is a standard Brownian motion defined on a filtered probability space $(\Omega,\mathcal{F},\mathbb{F},\mathbb{P})$ with filtration $\mathbb{F}=\{\mathcal{F}(t):t\geqslant0\}$.
The generator of process $X$ in \eqref{eq:X} is
\begin{equation}
\label{eq:Gamma}
\Gamma f (x) = \frac{1}{2}\sigma^2f''(x)-\mu f'(x).
\end{equation}

An impulse control policy $\phi=\{(\tau_n,\xi_n):n=0,1,2,\cdots\}$ is said to be \emph{admissible} if $\tau_n$ is an $\mathbb{F}$-stopping time, $\xi_n$ is $\mathcal{F}_{\tau_n}$-measurable, and
\[
\mathbb{E}_x\Big[\sum_{n=0}^{N(t)}\xi_n\Big]<\infty\quad \text{for any $t\geqslant0$},
\]
which ensure the controlled process $Z$ in \eqref{eq:Z} is a semimartingale.
Let $\mathcal{P}$ be the set of all admissible policies.  The objective is to find an admissible policy $\phi^*$ such that
\begin{equation}
\label{eq:phi-star}
\mathsf{DC}(x,\phi^*)=\inf_{\phi\in\mathcal{P}} \mathsf{DC}(x,\phi).
\end{equation}

In this paper, the holding/shortage cost function $g(\cdot)$ satisfies the following assumption.
\begin{assumption}\label{ass:h}
\begin{itemize}
\item[(A1)]$g(\cdot)$ is convex with $g(0) = 0$;
\item[(A2)]$g(\cdot)$ is twice continuously differentiable except at 0;
\item[(A3)]$g'(x)<0$ for $x < 0$ and  $g'(x)>0$ for $x> 0$;
\item[(A4)] $\lim_{x\rightarrow -\infty}[g'(x)+\beta k] < -\beta K_1/Q$; 
\item[(A5)]$g(x)$ is polynomially bounded, i.e., there exist a positive integer $n$ and two positive numbers $a,b$ such that $g(x)\leqslant a+b\abs{x}^n$ for all $x\in \mathbb{R}$.
\end{itemize}
\end{assumption}

\begin{remark}
\label{rem:h}
($a$) We explain Assumption (A4) as follows.
Note that the convexity of $g$ implies that $\lim_{x\rightarrow -\infty}(-g'(x))/\beta$ is an upper bound of the present value of the backorder cost per unit from now to infinity, and $K_1/Q+k$ is  a lower bound of the ordering cost per unit with order quantity $\xi\in(0,Q]$. If $\lim_{x\rightarrow -\infty}(-g'(x))/\beta \leqslant K_1/Q+k$, it will never be optimal to place any order with $\xi\in(0,Q]$ and then
we only need consider the policy with $\xi\in(Q,\infty)$. In order to facilitate the analysis and avoid this trivial case, we assume $\lim_{x\rightarrow -\infty}(-g'(x))/\beta > K_1/Q+k$, i.e., $\lim_{x\rightarrow -\infty}[g'(x)+\beta k] < -\beta K_1/Q$.
\\
($b$) We can get that $g'(x)$ is also polynomially bounded. To see this, noting that $g(x)$ is convex, we have that for any $x\neq 0$,
\[
g(x+1)\geqslant g(x)+g'(x)\quad\text{and}\quad g(x-1)\geqslant g(x)-g'(x),
\]
which, together with the nonnegativity of $g$ (which can be obtained by (A1) and (A3)), imply
\[
-g(x-1)\leqslant g'(x)\leqslant g(x+1),
\]
Therefore, $g'(x)$ is also polynomially bounded.
\\
($c$) Both piecewise linear cost $g(x)=hx^+-px^-$ with  $-p+\beta k<-\beta K_1/Q$ and quadratic cost $g(x)=\alpha x^2$ with $\alpha>0$ satisfy (A1)-(A5).
\qed
\end{remark}

Further, the setup cost function $K(\cdot)$ in \eqref{eq:setup} satisfies the following assumption.
\begin{assumption}\label{ass:K}
$K_1< K_2\leqslant 2K_1$.
\end{assumption}

Note that $K_1<K_2$ means that a higher setup cost would be incurred if the order quantity exceeds $Q$ (cf.  \cite{Caliskan-DemiragChenYang2012} and \cite{ChaoZipkin2008}). We explain the assumption $K_2\leqslant 2K_1$ as follows. If $2K_1<K_2$, the cost of placing two orders with the same amount $Q'$, satisfying $0<Q'\leqslant Q$ and $2Q'>Q$, is $2K_1+2kQ'$, which is strictly lower than $K_2+2kQ'$, the cost of placing one order with amount $2Q'$. Thus, a policy that places multiple orders at the same time may be optimal. In this paper, we do not consider the case placing multiple orders at a single time point, thus we assume $2K_1\geqslant K_2$. In fact, the setup cost function $K(\cdot)$ satisfying Assumption \ref{ass:K} is a subadditive function\footnote{A function $f$ is subadditive in domain $D$ if $f(x+y)\leqslant f(x)+f(y)$ for any $x,y\in D$.}. This assumption has been given in  \cite{Lippman1969}.

\subsection{Preliminaries}
\label{sec:sS}

In this subsection, we study two independent models,  embodied by $\mathcal{M}_i$, $i=1,2$, which
have the same setting as our original model except the following:
\begin{itemize}
\item[]\noindent Model $\mathcal{M}_1$: order quantity is constrained in $[0,Q]$ with setup cost $K_1$;
\item[]\noindent Model $\mathcal{M}_2$: order quantity is constrained in $[Q,\infty)$ with setup cost $K_2$.
\end{itemize}
Then we obtain a ``good" $(s,S)$ policy for each model, and these two $(s,S)$ policies would be used to characterize our optimal policy.

First, if the setup cost in our original model is a constant $K_1$ or $K_2$  for any order quantity, our problem becomes a classical inventory model, that admits a $(s,S)$ policy to be optimal. This has been studied in \cite{benkherouf2007} and \cite{Sulem1986} (and \cite{HelmesStockbridgeZhu2015} for a more general demand model).
For Model $\mathcal{M}_i$, let $v_i^{(s,S)}(x)$ denote the expected discounted cost for initial inventory level $x\in\mathbb{R}$ under a given $(s,S)$ policy.
Under given $(s,S)$ policy, if the current inventory level is larger than $s$, i.e., $x>s$, no order is placed before the inventory level process hits $s$, thus
\begin{align*}
v_i^{(s,S)}(x)&=\mathbb{E}_x\Big[\int_0^{\tau(s)} e^{-\beta t} g(Z(t))\,\mathrm{d}t +e^{-\beta \tau(s)} v_i^{(s,S)}(Z(\tau(s)))\Big]\quad \text{for $x>s$},
\end{align*}
where $\tau(s)$ is the first hitting time of process $Z$ at level $s$. Thus,  $v_i^{(s,S)}(x)$ satisfies the following ordinary differential equation and boundary condition
(cf. Page 127 of \cite{Sulem1986} and Page 170 of  \cite{WuChao2014})
\begin{align}
&\Gamma v_i^{(s,S)}(x)-\beta v_i^{(s,S)}(x)+g(x)=0,\quad \text{for $x>s$,}\quad \text{and}\label{eq:ode-1}\\
& \lim_{x\to\infty}e^{-\alpha x}v_i^{(s,S)}(x)=0,\quad \text{$\forall \alpha>0$},\label{eq:limit-condition}
\end{align}
where $\Gamma$ is defined in \eqref{eq:Gamma}.
Further, once the inventory level hits $s$, it will jump to $S$ by placing an order with quantity $S-s$, incurring an ordering cost $K_i+k\cdot(S-s)$, that means
\begin{align}
\label{eq:boundary-condition}
v_i^{(s,S)}(s)&=v_i^{(s,S)}(S)+K_i+k\cdot(S-s).
\end{align}
Then the expected discounted cost $v_i^{(s,S)}(x)$  can be solved by \eqref{eq:ode-1}-\eqref{eq:boundary-condition}, that is,
\begin{align}
v_i^{(s,S)}(x)&=
\frac{2}{\sigma^2}\frac{1}{\lambda_1+\lambda_2}\Big[\Lambda_1(x) +\Lambda_2(x)-\frac{1}{\lambda_2^2}A_i(s,S) e^{-\lambda_2x}\Big]\quad\text{for $x>s$, and }\label{eq:solution-vi}\\
v_i^{(s,S)}(x)&=v_i^{(s,S)}(S)+K_i+k\cdot(S-x) \quad \text{for $x\leqslant s$},\label{eq:v=v+K}
\end{align}
where
\[
\Lambda_1(x)=\int_x^{\infty} e^{\lambda_1(x-y)}g(y)\,\mathrm{d}y,\quad
\Lambda_2(x)=\int_0^x e^{-\lambda_2(x-y)}g(y)\,\mathrm{d}y,
\]
and
\begin{align}
\label{eq:A(s,S)}
A_i(s,S)=\frac{\lambda_2^2}{e^{-\lambda_2 S}-e^{-\lambda_2 s}}
\Big[\sum_{j=1}^2\big(\Lambda_j(S)-\Lambda_j(s)\big)
+\frac{\sigma^2(\lambda_1+\lambda_2)}{2}\big(K_i+k\cdot(S-s)\big)\Big],
\end{align}
with $\lambda_1=\big(\mu+\sqrt{\mu^2+2\beta \sigma^2}\big)/\sigma^2>0$ and $\lambda_2=\big(-\mu+\sqrt{\mu^2+2\beta \sigma^2}\big)/\sigma^2>0$.

The cost function $v_i^{(s,S)}(x)$ in \eqref{eq:solution-vi} depends on policy parameters $s$ and $S$ only through $A_i(s,S)$
and is strictly decreasing in $A_i(s,S)$. Thus, to minimize the cost of Model $\mathcal{M}_i$ within $(s,S)$-type policies, we consider the following
two constrained optimization problems:
\begin{equation*}
\mathcal{OP}_1:\ A_1^*:= \max_{0<S-s\leqslant Q} A_1(s,S)\quad \text{and}\quad \mathcal{OP}_2:\ A_2^*:= \max_{S-s\geqslant Q} A_2(s,S).
\end{equation*}
Consider $\mathcal{OP}_1$, define the Lagrange function $\mathcal{L}_1(s,S,\eta_1)=-A_1(s,S)+\eta_1 (S-s-Q)$ with the Lagrangian multiplier $\eta_1$\footnote{Since $A_1(s,S)\to-\infty$ as $S-s\to 0$, the local maximizer can not be $s=S$, then we did not put the constraint $S-s>0$ into the Lagrange function and just put it into the KKT conditions}. Then the KKT conditions are
\begin{align}
\frac{\partial A_1(s,S)}{\partial s}+\eta_1&=0,\label{eq:KKTcondition-1}\\
\frac{\partial A_1(s,S)}{\partial S}-\eta_1&=0,\label{eq:KKTcondition-2}\\
\eta_1((S-s)-Q)&=0,\label{eq:KKTcondition-3}\\
(S-s)-Q&\leqslant 0,\label{eq:KKTcondition-4}\\
s-S&<0,\label{eq:KKTcondition-5}\\
\eta_1&\geqslant0.\label{eq:KKTcondition-6}
\end{align}
Since the constraint functions are linear functions (i.e., affine functions), the regularity conditions are satisfied. Thus, from the classical nonlinear optimization theory (cf. \cite{Bertsekas2016}), any local maximizer must satisfy the above KKT conditions.
Similarly, the Lagrange function of  $\mathcal{OP}_2$ is $\mathcal{L}_2(s,S,\eta_2)=-A_2(s,S)+\eta_2 (Q-(S-s))$
and the KKT conditions are
\begin{align}
&\frac{\partial A_2(s,S)}{\partial s}-\eta_2=0,\quad \frac{\partial A_2(s,S)}{\partial S}+\eta_2=0,\label{eq:KKT2-1}\\
&\eta_2(Q-(S-s))=0,\quad Q-(S-s)\leqslant 0, \quad \text{and}\quad \eta_2\geqslant0.\label{eq:KKT2-2}
\end{align}
Define
\begin{align}
v_{A_i}(x)&=\frac{2}{\sigma^2}\frac{1}{\lambda_1+\lambda_2}\Big[\Lambda_1(x) +\Lambda_2(x)-\frac{1}{\lambda_2^2}A_i e^{-\lambda_2x}
\Big]\quad \text{ for $x\in\mathbb{R}$ and $A_i\in\mathbb{R}$,  and }\label{eq:v-Ai}\\
v_i(x)&=v_{A_i^*}(x).\nonumber
\end{align}
Let
\begin{align}
\underline{A}&\triangleq\lambda_2\int_{-\infty}^0e^{\lambda_2y}g'(y)\,\mathrm{d}y=g'(0-)-\int^{0}_{-\infty}e^{\lambda_2y}g''(y)\,\mathrm{d}y \quad\text{and}\label{eq:underline-A}\\
\bar{A}&\triangleq\lambda_1\int_{0}^{\infty}e^{-\lambda_1 y}g'(y)\,\mathrm{d}y=g'(0+)+\int_{0}^{\infty}e^{-\lambda_1 y}g''(y)\,\mathrm{d}y,\label{eq:bar-A}
\end{align}
where we used integration by parts. Note that $\underline{A}<0<\bar{A}$ from Assumption \ref{ass:h} (A1) and (A3).
First, we have the following lemma, which is crucial to prove our main results.
The proof is postponed to  Appendix \ref{app:proof-lem-sS-A}.
\begin{lemma}
\label{lem:sS-A}
$(a)$ $\underline{A}<A_i^*<\bar{A}$, $i=1,2$.\\
$(b)$ For any given $A_i$ with $A_i\in(\underline{A},\bar{A})$, function $v_{A_i}'(x)$ is strictly quasi-convex, having a unique minimizer $x_{A_i}^*<0$.\\
$(c)$ There exists a unique finite solution $(s_i,S_i)$ with $s_i<x_i^*<S_i$ $($where $x_i^*:=x_{A_i^*}^*$$)$ of the KKT conditions \eqref{eq:KKTcondition-1}-\eqref{eq:KKTcondition-6} or \eqref{eq:KKT2-1}-\eqref{eq:KKT2-2} such that $A_i(s_i,S_i)\in (\underline{A},\bar{A})$. Further,
\begin{equation}
\label{eq:A-star}
A_i^*=A_i(s_i,S_i)\quad\text{and}\quad v_i(x)=v_{A_i(s_i,S_i)}(x).
\end{equation}
$(d)$ $v_1'(s_1) =v_1'(S_1)=-k$ if $S_1-s_1<Q$ and $v_1'(s_1)=v_1'(S_1)\leqslant -k$ if $S_1-s_1=Q$.\\
$(e)$ $v_2'(s_2)=v_2'(S_2)=-k$ if $S_2-s_2>Q$ and $v_2'(s_2)=v_2'(S_2)\geqslant -k$ if $S_2-s_2=Q$.\\
$(f)$ $g'(x)+\beta k<0$ for $x\leqslant x_1^*$ and  when $S_2-s_2>Q$, $g'(x)+\beta k<0$ for $x\leqslant x_2^*$.
\end{lemma}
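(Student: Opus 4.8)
The plan is to reduce all six claims to properties of the one‑parameter family $v_A$ of \eqref{eq:v-Ai} and of its first two derivatives, which — differentiating the ODE that defines $v_A$ — solve $\Gamma f-\beta f+g=0$, $\Gamma f-\beta f+g'=0$ and $\Gamma f-\beta f+g''=0$ respectively (the last on $\mathbb{R}\setminus\{0\}$, with $g''$ read as the nonnegative measure $\mathrm{d}g'$). From \eqref{eq:solution-vi} and \eqref{eq:A(s,S)}, integration by parts, and the identities $\lambda_1\lambda_2=2\beta/\sigma^2$, $\lambda_1-\lambda_2=2\mu/\sigma^2$, I would first record three elementary facts: (i) $v_A'(x)=\tfrac{2}{\sigma^2(\lambda_1+\lambda_2)}\big[\widetilde\Lambda_1(x)+\widetilde\Lambda_2(x)+\tfrac{A}{\lambda_2}e^{-\lambda_2 x}\big]$, where $\widetilde\Lambda_1,\widetilde\Lambda_2$ are the analogues of $\Lambda_1,\Lambda_2$ built from $g'$; (ii) $v_A''(0+)=\tfrac{2}{\sigma^2(\lambda_1+\lambda_2)}(\bar A-A)$ and $\lim_{x\to-\infty}e^{\lambda_2 x}v_A'(x)=\tfrac{2}{\sigma^2(\lambda_1+\lambda_2)\lambda_2}(A-\underline A)$ — so $v_{\underline A}$ equals the no‑order cost $V_0(x):=\mathbb{E}_x\!\int_0^\infty e^{-\beta t}g(X(t))\,\mathrm{d}t$, which is convex in $x$, and $v_A'=V_0'+\gamma(A)e^{-\lambda_2\,\cdot}$ with $\gamma(A):=\tfrac{2(A-\underline A)}{\sigma^2(\lambda_1+\lambda_2)\lambda_2}$; (iii) with $D(s,S):=\tfrac{2}{\sigma^2(\lambda_1+\lambda_2)\lambda_2^2}\big(e^{-\lambda_2 s}-e^{-\lambda_2 S}\big)>0$ one has $\partial_s A_i(s,S)=\big(v_{A_i(s,S)}'(s)+k\big)/D(s,S)$ and $\partial_S A_i(s,S)=-\big(v_{A_i(s,S)}'(S)+k\big)/D(s,S)$, so that the KKT conditions become smooth‑pasting relations.

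\emph{Part (b).} For $A>\underline A$ we have $\gamma(A)>0$ and $e^{\lambda_2 x}v_A''(x)=e^{\lambda_2 x}V_0''(x)-\lambda_2\gamma(A)$, so (b) reduces to the (unconditional) claim that $x\mapsto e^{\lambda_2 x}V_0''(x)$ is nondecreasing. I would prove this using the $\beta$‑resolvent kernel of $X$, $G_\beta(x,y)=\tfrac{2}{\sigma^2(\lambda_1+\lambda_2)}e^{\lambda_1(x-y)}$ for $x\le y$ and $=\tfrac{2}{\sigma^2(\lambda_1+\lambda_2)}e^{-\lambda_2(x-y)}$ for $x\ge y$: since $\Gamma$ has constant coefficients, differentiation commutes with the resolvent $R_\beta$, so $V_0''=R_\beta(g'')$ and
\[
e^{\lambda_2 x}V_0''(x)=\frac{2}{\sigma^2(\lambda_1+\lambda_2)}\Big[e^{(\lambda_1+\lambda_2)x}\!\int_x^\infty e^{-\lambda_1 y}\,\mathrm{d}g'(y)+\int_{-\infty}^x e^{\lambda_2 y}\,\mathrm{d}g'(y)\Big],
\]
whose $x$‑derivative, after the two boundary terms cancel, is $\tfrac{2}{\sigma^2}e^{(\lambda_1+\lambda_2)x}\int_x^\infty e^{-\lambda_1 y}\,\mathrm{d}g'(y)\ge0$, strictly positive unless $g$ is affine on $(x,\infty)$. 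Hence $e^{\lambda_2 x}v_A''(x)$ is nondecreasing, equals $-\lambda_2\gamma(A)<0$ as $x\to-\infty$, and by (ii) equals $v_A''(0+)=\tfrac{2}{\sigma^2(\lambda_1+\lambda_2)}(\bar A-A)>0$ at $0+$ when $A<\bar A$; so $v_A''$ changes sign exactly once, from $-$ to $+$, at a point $x_A^*<0$, whence $v_A'$ is strictly decreasing on $(-\infty,x_A^*]$ and strictly increasing on $[x_A^*,\infty)$ — strictly quasi‑convex with unique minimiser $x_A^*<0$. (The one delicate point is to rule out a plateau of $v_A'$ through its minimum: on a maximal affine piece of $g$, $v_A''$ solves $\Gamma u-\beta u=0$ and is thus a nonvanishing exponential combination once nonzero at a point, so it cannot vanish identically on an interval meeting $(-\infty,x_A^*]$, where $v_A''<0$, while $v_A''(0+)>0$ keeps any right‑hand plateau strictly above $\min v_A'$.)

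\emph{Parts (a), (c), (d), (e), (f).} By (iii), the KKT system \eqref{eq:KKTcondition-1}--\eqref{eq:KKTcondition-6} for $\mathcal{OP}_1$ gives $v_{A_1(s,S)}'(s)+k=-\eta_1D(s,S)=v_{A_1(s,S)}'(S)+k$, so $v_1'(s_1)=v_1'(S_1)=-k-\eta_1D\le-k$, with equality forced (by complementary slackness) when $S_1-s_1<Q$; this is (d), and the mirror computation for \eqref{eq:KKT2-1}--\eqref{eq:KKT2-2} gives $v_2'(s_2)=v_2'(S_2)=-k+\eta_2D\ge-k$ with equality when $S_2-s_2>Q$, which is (e). For (a) and (c), I use part (b): for each $A\in(\underline A,\bar A)$ the function $v_A'$ is strictly quasi‑convex, $m(A):=\min_x v_A'$ is continuous and strictly increasing in $A$ (an envelope computation gives $m'(A)=\gamma'(A)e^{-\lambda_2 x_A^*}>0$), and by (ii) with (A4) $m(A)\to\big(\lim_{x\to-\infty}g'(x)\big)/\beta<-(k+K_1/Q)<-k$ as $A\downarrow\underline A$; hence for every $A$ below $A^\dagger:=\sup\{A:m(A)\le-k\}$ the equation $v_A'(x)=-k$ has exactly two roots $s(A)<x_A^*<S(A)$. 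Imposing the self‑consistency $A=A_1(s(A),S(A))$ — equivalently $v_A(s(A))-v_A(S(A))=K_1+k(S(A)-s(A))$ — reduces to one equation in the scalar $A$ that a monotonicity‑and‑continuity argument (using that $v_A$ is strictly decreasing in $A$ and that $s(A),S(A)$ vary continuously) solves uniquely at some $A_1^*\in(\underline A,\bar A)$; if the resulting gap exceeds $Q$, one instead solves the boundary problem $v_A'(s)=v_A'(s+Q)$, $A=A_1(s,s+Q)$, again uniquely by strict quasi‑convexity. That this KKT point maximises $A_1$ over the feasible region follows from coercivity — $A_1(s,S)\to-\infty$ as $S-s\downarrow0$ (the bracket in \eqref{eq:A(s,S)} tends to $\tfrac{\sigma^2(\lambda_1+\lambda_2)}{2}K_1>0$ while its prefactor blows up with negative sign), $\limsup A_1\le\underline A<A_1^*$ as $s\to-\infty$, and $A_1\to-\infty$ as $s\to+\infty$ — so the maximiser lies in the interior and is the unique KKT point; this also yields $\underline A<A_1^*<\bar A$ (the lower bound from the existence of a feasible $(s,s+Q)$ with $A_1>\underline A$, guaranteed by (A4); the upper bound because $A_1^*\ge\bar A$ together with (ii) would make $v_{A_1^*}'$ have no finite minimiser below $0$, contradicting $s_1<x_1^*<S_1$), proving (a), and \eqref{eq:A-star} holds since $v_1=v_{A_1^*}=v_{A_1(s_1,S_1)}$. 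The same argument with the single constraint reversed and $K_1\to K_2$ handles $\mathcal{OP}_2$. Finally (f): at $x_i^*$, $v_i''(x_i^*)=(v_i')'(x_i^*)=0$, so $\Gamma v_i'-\beta v_i'+g'=0$ gives $\tfrac12\sigma^2 v_i'''(x_i^*)=\beta v_i'(x_i^*)-g'(x_i^*)$; since $x_i^*$ is the strict minimiser of $v_i'$, $v_i'''(x_i^*)\ge0$, and since $s_i<x_i^*$ with $v_i'$ strictly decreasing on $(-\infty,x_i^*]$, $v_i'(x_i^*)<v_i'(s_i)\le-k$ (and $v_2'(x_2^*)<v_2'(s_2)=-k$ in the case $S_2-s_2>Q$ of (e)); therefore $g'(x_i^*)\le\beta v_i'(x_i^*)<-\beta k$, and since $g'$ is nondecreasing, $g'(x)+\beta k<0$ for all $x\le x_i^*$.

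\emph{Main obstacle.} The crux is part (b): showing that $v_A'$ is strictly quasi‑convex, i.e., that $v_A''$ has a single sign change. A maximum‑principle argument for $\Gamma u-\beta u+g''=0$ with $g''\ge0$ already forbids a strictly negative interior local minimum of $v_A''$, but it does not by itself prevent $v_A'$ from acquiring a spurious local maximum to the right of $x_A^*$; the monotonicity of $e^{\lambda_2 x}V_0''(x)$, obtained from the explicit resolvent kernel as above, is exactly what closes that gap. The residual difficulties — handling the non‑smoothness of $g$ at $0$, where $g''$ is merely a nonnegative measure, and excluding degenerate plateaux of $v_A'$ — are routine but require care.
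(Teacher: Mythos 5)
Your architecture is sound and in places cleaner than the paper's: for part (b) the paper computes $v_{A_i}''$ and $v_{A_i}'''$ explicitly and runs a case analysis on whether $A_i\gtrless g'(0-)$, whereas your observation that $v_A'=V_0'+\gamma(A)e^{-\lambda_2\cdot}$ with $V_0=v_{\underline A}$ the no-order cost, together with the monotonicity of $x\mapsto e^{\lambda_2 x}V_0''(x)$ from the resolvent kernel, gives the single sign change of $v_A''$ in one stroke (and your plateau worry evaporates: the derivative $\tfrac{2}{\sigma^2}e^{(\lambda_1+\lambda_2)x}\int_x^\infty e^{-\lambda_1y}\,\mathrm{d}g'(y)$ is strictly positive for $x<0$ because $g'$ changes sign at $0$, so $e^{\lambda_2x}v_A''$ is strictly increasing there). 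Parts (d), (e), (f) match the paper's arguments in substance; your differentiated-ODE derivation of $g'(x_i^*)\leqslant\beta v_i'(x_i^*)<-\beta k$ is equivalent to the paper's explicit evaluation of $v_i'(x_i^*)$ using $v_i''(x_i^*)=0$. For (c), your scalar fixed-point argument in $A$ (monotone root of $F(A)=-\int_{s(A)}^{S(A)}[v_A'+k]$) replaces, but is equivalent to, the paper's coercivity-plus-uniqueness-of-smooth-pasting argument.

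There are, however, two genuine gaps, both in part (a). First, the lower bound $\underline A<A_i^*$ — which is where the paper spends most of its effort (the two-case analysis of $b_i(s,S)$ according to whether $g'(-\infty)$ is finite) — is dismissed as ``guaranteed by (A4).'' In your framework it reduces to exhibiting a feasible $(s,S)$ with $V_0(s)-V_0(S)>K_i+k(S-s)$, which for $i=1$ follows from $V_0'(y)\to g'(-\infty)/\beta<-k-K_1/Q$ and $S=s+Q$, $s\ll0$; you should write this out. More seriously, for $i=2$ ``the same argument with $K_1\to K_2$'' does not work: the feasible set requires $S-s\geqslant Q$, and (A4) only gives a per-unit saving exceeding $k+K_1/Q$, not $k+K_2/Q$. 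One must take a jump of length $LQ$ with $LK_1\geqslant K_2$ (the paper uses $S=s+3Q$) and invoke Assumption \ref{ass:K} ($K_2\leqslant2K_1$), which your proof never uses but which is indispensable here. Second, your stated reason for $A_1^*<\bar A$ — that $A_1^*\geqslant\bar A$ ``would make $v_{A_1^*}'$ have no finite minimiser below $0$, contradicting $s_1<x_1^*<S_1$'' — is circular, since $x_1^*<0$ is only established for $A\in(\underline A,\bar A)$. The correct argument is available one step away in your own setup: since $v_A'\geqslant v_{\bar A}'$ pointwise for $A\geqslant\bar A$ and $\min_x v_{\bar A}'(x)=v_{\bar A}'(0)=\bar A/\beta>0>-k$ (using $v_{\bar A}''\leqslant0$ on $(-\infty,0]$, $\geqslant0$ on $[0,\infty)$, and $\tfrac{2}{\sigma^2\lambda_1\lambda_2}=\tfrac1\beta$), whereas any KKT point forces $v_A'(s)=v_A'(S)\leqslant-k$, no KKT point — in particular no maximizer — can have $A\geqslant\bar A$. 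The paper instead proves $A_i(s,S)<\bar A$ for \emph{all} $s<S$ via the decomposition $A_i=\bar A+(\text{negative})\cdot[\,\cdots+r(s,S)]$ with $r\geqslant0$; either route works, but yours needs the $m(\bar A)$ computation made explicit.
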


\begin{remark}
\label{rem:vi}
$(a)$ It follows from Lemma \ref{lem:sS-A} $(b)$-$(c)$ that $v_i'$ is strictly quasi-convex, i.e.,
$v_i''(x)<0$ for $x<x_i^*$ and $v_i''(x)>0$ for $x>x_i^*$.\\
$(b)$ When $A_1^*>A_2^*$, we can get $s_1>s_2$. This is because, when $A_1^*> A_2^*$,  it follows from
\begin{equation}
\label{eq:vi-derivate}
v_i'(x)=\frac{2}{\sigma^2}\frac{1}{\lambda_1+\lambda_2}\Big[\lambda_1\Lambda_1(x) -\lambda_2\Lambda_2(x)+\frac{1}{\lambda_2}A_i^* e^{-\lambda_2x}
\Big]
\end{equation}
that $v_1'(s_2)>v_2'(s_2)$, which together with Lemma \ref{lem:sS-A} $(d)$-$(e)$, yields that
$v_1'(s_2)>v_2'(s_2)\geqslant-k\geqslant v_1'(s_1)$. Then, the quasi-convexity of $v_i'$ $($Remark \ref{rem:vi} $(a)$$)$ immediately implies $s_1>s_2$.
\end{remark}
From \eqref{eq:solution-vi}-\eqref{eq:v=v+K}, \eqref{eq:v-Ai}, and \eqref{eq:A-star},
we know that under $(s_i,S_i)$ policy, the associated cost \eqref{eq:DC-cost} for our original problem is
\begin{equation}
\label{eq:DC}
\mathsf{DC}(x,(s_i,S_i))=
\begin{cases}
v_i(x) & \text{for $x>s_i$},\\
v_i(S_i)+K(S_i-x)+k\cdot(S_i-x) & \text{for $x\leqslant s_i$}.
\end{cases}
\end{equation}

In Lemma \ref{lem:sS-A} ($d$)-($e$), $v_i'(s_i) =v_i'(S_i)=-k$ holds when the constraints are not tight. This is consistent with the literature without any order constraints; see \cite{benkherouf2007}. In fact, $v_i'(s_i) =v_i'(S_i)=-k$, called smooth-pasting conditions, are used to characterize the optimal policy parameters $(s_i,S_i)$ for the model without order constraints. However, Lemma \ref{lem:sS-A} ($d$)-($e$) also show that the smooth-pasting conditions no longer hold when the constraints are tight.

In the following section, we will show the performance of these two policies, which are indeed optimal in some cases.

\section{Main results}
\label{sec:optimal}

In this section, with Lemma \ref{lem:sS-A} in hand, we present our main results. We show that if $A_1^*\leqslant A_2^*$, $(s_2,S_2)$ policy is optimal for our problem \eqref{eq:phi-star} (Theorem \ref{thm1}), and that if $A_1^*>A_2^*$, $(s_1,S_1)$ policy is optimal only when the initial inventory level $x$ is in $[S_1-Q,\infty)$ (Theorem \ref{thm2}), but it is strictly worse than a generalized $(s_1,\{S^*(x):x\leqslant s_1\})$ policy for some $x\in(-\infty,S_1-Q)$ (Theorem \ref{thm3}).
Finally, we, in Theorem \ref{thm4}, show that if $A_1^*>A_2^*$, such generalized policy is optimal in a subset of admissible policies for $x\in(-\infty,S_1-Q)$.

First, it follows from \eqref{eq:v-Ai} and \eqref{eq:A-star}, that if $A_1^*\leqslant A_2^*$,
\begin{equation}
\label{eq:v1>v2}
v_1(x)\geqslant v_2(x)\quad \text{for $x\in\mathbb{R}$}.
\end{equation}
Furthermore, \eqref{eq:DC} has shown that for the initial inventory level $x\in[s_i,\infty)$, $v_i(x)$ denotes the discounted cost  under $(s_i,S_i)$ policy. Thus, \eqref{eq:v1>v2} implies that for any initial level $x\in[s_1\vee s_2,\infty)$, the discounted cost under $(s_2,S_2)$ policy is lower than that under  $(s_1,S_1)$ policy. The following Theorem \ref{thm1} will show that when $A_1^*\leqslant A_2^*$,  $(s_2,S_2)$ policy indeed is optimal for any $x\in\mathbb{R}$. The proof of this theorem is provided in Section \ref{sec:proof-thm1}.

\begin{theorem}
\label{thm1}
If $A_1^*\leqslant A_2^*$, $(s_2,S_2)$ policy is optimal in  $\mathcal{P}$ for any initial level $x\in\mathbb{R}$, and the optimal cost is
\begin{equation}
\label{eq:V2}
\mathsf{DC}(x,(s_2,S_2))=V_2(x):=
\begin{cases}
v_2(x), & \text{for $x\geqslant s_2$},\\
v_2(S_2)+K_2+k\cdot(S_2-x), & \text{for $x<s_2$}.
\end{cases}
\end{equation}
\end{theorem}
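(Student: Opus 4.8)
\emph{Proof sketch (proposal).} The plan is to apply the lower bound theorem of \S\ref{sec:LBT} with the candidate cost function $V_2$ from \eqref{eq:V2}. It then suffices to check: (i) $V_2\in\mathcal{C}^1(\R)$, is piecewise $\mathcal{C}^2$, polynomially bounded, and nonnegative; (ii) $\Gamma V_2(x)-\beta V_2(x)+g(x)\geqslant 0$ for all $x\neq 0$, with equality on $[s_2,\infty)$; (iii) the jump inequality $V_2(x)\leqslant V_2(y)+K(y-x)+k(y-x)$ holds for all $y>x$; and (iv) $(s_2,S_2)$ attains $V_2$, i.e.\ $\mathsf{DC}(x,(s_2,S_2))=V_2(x)$. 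Given (i)--(iv), the lower bound theorem yields $\mathsf{DC}(x,\phi)\geqslant V_2(x)$ for every $\phi\in\mathcal P$ and every $x\in\R$, while (iv) shows this bound is attained, which is precisely the claim.

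The decisive preliminary observation is that $A_1^*\leqslant A_2^*$ forces $S_2-s_2>Q$ \emph{strictly}. Indeed, from \eqref{eq:A(s,S)} one computes, for every $s<S$,
\[
A_1(s,S)-A_2(s,S)=\frac{\lambda_2^2}{e^{-\lambda_2 S}-e^{-\lambda_2 s}}\cdot\frac{\sigma^2(\lambda_1+\lambda_2)}{2}\,(K_1-K_2)>0,
\]
since $e^{-\lambda_2 S}-e^{-\lambda_2 s}<0$ and $K_1<K_2$; were $S_2-s_2=Q$, the pair $(s_2,S_2)$ would be feasible for $\mathcal{OP}_1$, giving $A_1^*\geqslant A_1(s_2,S_2)>A_2(s_2,S_2)=A_2^*$, a contradiction. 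Hence Lemma \ref{lem:sS-A}(e) gives the smooth-pasting identities $v_2'(s_2)=v_2'(S_2)=-k$, so $V_2$ has matching one-sided derivatives at $s_2$; combined with the continuity of $v_2'=v_{A_2^*}'$ at $0$ and the fact that $v_2$ solves \eqref{eq:ode-1}, this gives (i) and the equality part of (ii). For the inequality in (ii) with $x<s_2$, $V_2$ is affine with slope $-k$, so $\Gamma V_2(x)-\beta V_2(x)+g(x)=\mu k-\beta V_2(x)+g(x)=:\varphi(x)$; by Lemma \ref{lem:sS-A}(f) (valid since $S_2-s_2>Q$) and $s_2<x_2^*$ one has $\varphi'(x)=g'(x)+\beta k<0$ on $(-\infty,s_2)$, whence $\varphi(x)\geqslant\varphi(s_2-)$, and \eqref{eq:ode-1} at $s_2$ together with $v_2'(s_2)=-k$ gives $\varphi(s_2-)=-\tfrac12\sigma^2 v_2''(s_2)>0$ because $v_2''(s_2)<0$ (Remark \ref{rem:vi}(a)). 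Finally (iv) is read off \eqref{eq:DC}: for $x<s_2$ the order amount $S_2-x>Q$ incurs exactly the setup cost $K_2$, matching \eqref{eq:V2}, and nonnegativity of $V_2$ in (i) is then clear.

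The core of the proof is the jump inequality (iii). Setting $h(x):=V_2(x)+kx$, (iii) is equivalent to $h(x)-h(y)\leqslant K(y-x)$ for all $y>x$. From \eqref{eq:V2} and the identities $v_2'(s_2)=v_2'(S_2)=-k$, the function $h$ is constant equal to $v_2(S_2)+K_2+kS_2$ on $(-\infty,s_2]$, strictly decreasing on $[s_2,S_2]$ to the value $v_2(S_2)+kS_2$, and strictly increasing on $[S_2,\infty)$ (using strict quasi-convexity of $v_2'+k$, which vanishes at $s_2$ and $S_2$, from Lemma \ref{lem:sS-A}(b),(e)). When $y-x>Q$ the required bound $h(x)-h(y)\leqslant K_2$ is immediate from this shape. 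When $0<y-x\leqslant Q$, the shape of $h$ (and monotonicity of $v_2'+k$ on $(s_2,S_2)$) reduces the worst case to $y=x+Q$ with $x\in[s_2,S_2-Q]$, and the needed bound $h(x)-h(x+Q)\leqslant K_1$ is then equivalent to
\[
v_2(x)-v_2(x+Q)\leqslant K_1+kQ\qquad\text{for all }x\in\R.
\]
To obtain this I would invoke $A_1^*\leqslant A_2^*$ a second time: by \eqref{eq:vi-derivate}, $v_1'(x)-v_2'(x)=\tfrac{2}{\sigma^2\lambda_2(\lambda_1+\lambda_2)}(A_1^*-A_2^*)e^{-\lambda_2 x}\leqslant 0$, hence $v_2'\geqslant v_1'$ pointwise and so $v_2(x)-v_2(x+Q)=-\int_x^{x+Q}v_2'\leqslant-\int_x^{x+Q}v_1'=v_1(x)-v_1(x+Q)$. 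It then remains to show $\max_{x\in\R}\bigl(v_1(x)-v_1(x+Q)\bigr)\leqslant K_1+kQ$. Because $v_1'$ is strictly quasi-convex (Lemma \ref{lem:sS-A}(b)), $x\mapsto v_1(x)-v_1(x+Q)$ has a unique critical point $\tilde x$, its global maximum, characterized by $v_1'(\tilde x)=v_1'(\tilde x+Q)$ with $\tilde x<x_1^*<\tilde x+Q$. If $S_1-s_1=Q$ then $\tilde x=s_1$ and the maximum equals $v_1(s_1)-v_1(S_1)=K_1+kQ$ by \eqref{eq:boundary-condition}. If $S_1-s_1<Q$ then $v_1'(s_1)=v_1'(S_1)=-k$ (Lemma \ref{lem:sS-A}(d)), and comparing widths of the sub-level intervals of $v_1'$ gives $\tilde x<s_1<S_1<\tilde x+Q$ with $v_1'>-k$ on $(\tilde x,s_1)\cup(S_1,\tilde x+Q)$; splitting $-\int_{\tilde x}^{\tilde x+Q}v_1'$ at $s_1$ and $S_1$ and using \eqref{eq:boundary-condition} on the middle piece then gives $v_1(\tilde x)-v_1(\tilde x+Q)<k(s_1-\tilde x)+K_1+k(S_1-s_1)+k(\tilde x+Q-S_1)=K_1+kQ$. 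This establishes the displayed inequality, hence (iii).

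I expect this last step to be the main obstacle. The ``natural'' swing of $V_2$ over the band $[s_2,S_2]$ is $K_2$, not $K_1$, so no direct estimate on $V_2$ alone can produce the bound $K_1$ required for orders of size $\leqslant Q$; the pointwise comparison $v_2'\geqslant v_1'$ — available precisely because $A_1^*\leqslant A_2^*$ — is what transfers the estimate onto the $\mathcal{M}_1$-optimal function $v_1$, whose swing over any interval of length $Q$ is at most $K_1+kQ$ by the very construction of $(s_1,S_1)$. Once (i)--(iv) are in place, the lower bound theorem of \S\ref{sec:LBT} closes the proof; the remaining routine points are the polynomial-growth and integrability hypotheses of that theorem, which follow from Assumption \ref{ass:h}(A5), admissibility, and $V_2\geqslant 0$.
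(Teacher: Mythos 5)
Your proposal is correct and follows essentially the same route as the paper: verify the generalized lower bound theorem for $V_2$, establish $S_2-s_2>Q$ so that smooth pasting gives $V_2\in\mathcal{C}^1$, and close the small-order jump inequality by passing from $v_2$ to $v_1$ via $v_2'\geqslant v_1'$ (a consequence of $A_1^*\leqslant A_2^*$) and bounding the swing of $v_1$ over length-$Q$ intervals by $K_1+kQ$. The only organizational difference is in that last step: the paper first proves (Lemma \ref{lem:A2}) that $A_1^*\leqslant A_2^*$ forces $S_1-s_1=Q$ and then uses a rearrangement of the integral $\int_{x_1}^{x_2}[v_1'+k]$ against $\int_{s_1}^{S_1}[v_1'+k]$, whereas your critical-point argument for $\max_x\bigl(v_1(x)-v_1(x+Q)\bigr)$ handles both $S_1-s_1=Q$ and $S_1-s_1<Q$ directly and thus dispenses with that half of the lemma.
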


By $K_1<K_2$, the optimal cost for system $\mathcal{M}_1$ will be less than the one for system $\mathcal{M}_2$ if the optimal ordering quantity $S_1-s_1$ is not  bounded by $Q$ in system $\mathcal{M}_1$. Hence we can find that $A_1^*\leqslant A_2^*$ only happens when $S_1-s_1=Q$.
Thus, Theorem \ref{thm1} tells us when it happens, $(s_2,S_2)$ policy is better than $(s_1,S_1)$ policy for any initial inventory level $x\in\mathbb{R}$. This ensures the optimality of $(s_2,S_2)$ policy.

Consider the other case when $A_1^*>A_2^*$, we have
\[
v_1(x)< v_2(x)\quad \text{for $x\in\mathbb{R}$.}
\]
Note that $s_1>s_2$ (see Remark \ref{rem:vi} ($b$)), thus for any initial level $x\in[s_1,\infty)$, the cost under $(s_1,S_1)$ policy is lower than that under $(s_2,S_2)$ policy.
It seems that $(s_1,S_1)$ policy is a ``good" policy, especially when the initial inventory level is in $[s_1,\infty)$.
In fact, the following theorem shows that $(s_1,S_1)$ policy is optimal for any $x\in[S_1-Q,\infty)$.
The proof is given in Section \ref{sec:proof-thm2}.

\begin{theorem}
\label{thm2}
If $A_1^*>A_2^*$, for any initial level $x\in[S_1-Q,\infty)$, $(s_1,S_1)$ policy is optimal in $\mathcal{P}$, and the optimal cost is
\begin{equation*}
\mathsf{DC}(x,(s_1,S_1))=
\begin{cases}
v_1(x), & \text{for $x\geqslant s_1$},\\
v_1(S_1)+K_1+k\cdot(S_1-x), & \text{for $S_1-Q\leqslant x<s_1$}.
\end{cases}
\end{equation*}
\end{theorem}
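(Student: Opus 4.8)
The plan is to prove Theorem \ref{thm2} via the generalized lower bound theorem (verification theorem) announced in \S\ref{sec:LBT}. The strategy is the classical one: exhibit a candidate value function $w(x)$, show it is a lower bound for $\mathsf{DC}(x,\phi)$ over all $\phi\in\mathcal P$, and show the candidate $(s_1,S_1)$ policy attains it on $[S_1-Q,\infty)$. The natural candidate is
\begin{align*}
w(x)=
\begin{cases}
v_1(x), & x\geqslant s_1,\\
v_1(S_1)+K_1+k\cdot(S_1-x), & x<s_1,
\end{cases}
\end{align*}
i.e.\ $w=\mathsf{DC}(\cdot,(s_1,S_1))$ on $[s_1,\infty)$ extended below $s_1$ by the order-up-to-$S_1$ relation. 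By \eqref{eq:DC}, $w(x)=\mathsf{DC}(x,(s_1,S_1))$ for $x\geqslant S_1-Q$ (there the single order from $x$ up to $S_1$ has size $S_1-x\leqslant Q$, so it incurs exactly $K_1$), so attainment on $[S_1-Q,\infty)$ is immediate once $w$ is shown to be a lower bound; this also forces $w$ to be the infimum on that set.

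The lower-bound verification requires checking the hypotheses of the generalized verification theorem of \S\ref{sec:LBT} for this $w$. I expect these to be, in the notation standard for impulse control with setup cost $K(\cdot)+k\,\xi$: (i) a \emph{quasi-variational-type inequality} on the generator, $\Gamma w(x)-\beta w(x)+g(x)\geqslant 0$ for all $x$ (with equality on the continuation region $\{x>s_1\}$, which holds by \eqref{eq:ode-1}); (ii) the \emph{obstacle inequality} $w(x)\leqslant w(y)+K(y-x)+k\cdot(y-x)$ for all $y>x$, i.e.\ it never helps to jump; (iii) the growth/integrability condition matching Assumption \ref{ass:h}(A5) and \eqref{eq:limit-condition} so that the local martingale terms vanish in expectation and Fatou/dominated convergence applies; and (iv) the relaxed smoothness: $w\in\mathcal C^1$ except at finitely many points, which here is just the single kink at $x=s_1$ — and by Lemma \ref{lem:sS-A}(d), $v_1'(s_1)=v_1'(S_1)\leqslant -k$, so the left derivative $w'(s_1-)=-k$ and the right derivative $w'(s_1+)=v_1'(s_1)\leqslant -k$ satisfy $w'(s_1-)\geqslant w'(s_1+)$, i.e.\ $w$ has a concave kink there, which is exactly the sign the verification theorem tolerates.

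The bulk of the work is verifying (i) and (ii). For (i), on $x>s_1$ it is \eqref{eq:ode-1}; for $x<s_1$, $w$ is affine with slope $-k$, so $\Gamma w(x)-\beta w(x)+g(x)=\mu k-\beta w(x)+g(x)$, and using $w(x)=v_1(S_1)+K_1+k(S_1-x)$ one must show this is $\geqslant 0$ for all $x<s_1$. This should reduce, after differentiating in $x$, to monotonicity/sign facts about $g'(x)+\beta k$ on $(-\infty,s_1)$; here Lemma \ref{lem:sS-A}(f) ($g'(x)+\beta k<0$ for $x\leqslant x_1^*$) and the convexity of $g$ together with Remark \ref{rem:vi}(a) and the defining smooth-pasting/KKT relations for $(s_1,S_1)$ (which pin down the constant $v_1(S_1)+K_1+k S_1$ as $\Gamma w/\beta$-consistent at $x=s_1$) should close it, after checking the value at $x=s_1$, where by continuity of $w$ and the ODE the expression equals $0$ in the limit. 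For (ii), the worst case is ordering from some $x\geqslant s_1$; one must show $v_1(x)\leqslant w(y)+K(y-x)+k(y-x)$ for all $y>x$. Splitting on whether the jump size $y-x$ is $\leqslant Q$ or $>Q$ (hence $K=K_1$ or $K_2$), and on whether $y\lessgtr s_1$, this becomes a finite set of one-dimensional inequalities in $v_1$; the binding one should be $y-x\leqslant Q$ landing in the continuation region, which reduces to $v_1(x)-v_1(y)\leqslant K_1+k(y-x)$, provable from the quasi-convexity of $v_1'$ (Remark \ref{rem:vi}(a)), the bound $v_1'(s_1)\leqslant -k$ (Lemma \ref{lem:sS-A}(d)), and the definition of $S_1$ as the maximizer in $\mathcal{OP}_1$ — essentially because $A_1(s_1,S_1)$ being maximal is equivalent to the "no beneficial alternative order" statement. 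The case $K=K_2$ is then easier since $K_2>K_1$.

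The main obstacle I anticipate is (ii) with $K=K_1$: translating the optimality of $(s_1,S_1)$ in problem $\mathcal{OP}_1$ (a two-variable KKT optimum, possibly with the constraint $S-s=Q$ \emph{tight}, per Lemma \ref{lem:sS-A}(d)) into the full one-parameter family of inequalities $v_1(x)\le v_1(y)+K_1+k(y-x)$ for all admissible jumps. When the constraint is slack this is the familiar consequence of smooth-pasting plus quasi-convexity of $v_1'$; when it is tight ($S_1-s_1=Q$, $v_1'(S_1)\leqslant -k$) one must additionally use that no jump of size exactly $Q$ starting above $s_1$ can do better, which is where the maximality of $A_1^*$ over $\{0<S-s\le Q\}$ is used in its full strength rather than just through the first-order conditions. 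I would isolate this as a short lemma ("for all $x\le s_1$ and $y$ with $0<y-x\le Q$, $v_1(x_{\text{cont}})-v_1(y)\le K_1+k(y-x)$") and prove it by comparing $A_1(x\wedge s_1, y)$ with $A_1(s_1,S_1)=A_1^*$, exploiting that $v_{A}$ is decreasing in $A$ and the explicit form \eqref{eq:A(s,S)}. Everything else — the growth estimates, vanishing of martingale terms, the single-kink bookkeeping — is routine given Assumption \ref{ass:h} and the generalized lower bound theorem, and I would defer those verifications to \S\ref{sec:proof} once the QVI and obstacle inequalities are in hand.
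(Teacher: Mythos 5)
Your overall architecture (verification via the generalized lower bound theorem, then attainment by $(s_1,S_1)$ on $[S_1-Q,\infty)$) matches the paper's, but your candidate function is the wrong one in the case where the constraint is tight ($S_1-s_1=Q$, so $v_1'(s_1)=v_1'(S_1)<-k$ is possible by Lemma \ref{lem:sS-A}($d$)), and the error traces to a reversed sign convention for the kink. Your $w$ pastes the affine slope-$(-k)$ extension at $s_1$, producing $w'(s_1-)=-k$ and $w'(s_1+)=v_1'(s_1)$, hence $w'(s_1+)-w'(s_1-)=v_1'(s_1)+k\leqslant 0$, possibly strictly. You call this concave kink ``exactly the sign the verification theorem tolerates,'' but it is the opposite: in Proposition \ref{prop:lowerbound} the set $\mathcal{S}_f$ collects precisely the points with $f'(z_i+)-f'(z_i-)<0$, and at such points the local-time term in the It\^o--Tanaka expansion contributes with the \emph{wrong} sign, so the lower bound is only obtained over the restricted class $\mathcal{P}^f$ of policies with vanishing expected discounted local time there. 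Corollary \ref{cor:lowerbound} (the version needed to conclude optimality in all of $\mathcal{P}$, as Theorem \ref{thm2} asserts) requires $f'(z_i+)-f'(z_i-)\geqslant 0$. So in the tight case your argument would only prove optimality of $(s_1,S_1)$ in a proper subclass of $\mathcal{P}$. A second, related problem: your verification of the generator inequality on $\{x<s_1\}$ relies on the value at the pasting point being nonnegative ``in the limit by the ODE,'' but that computation gives $\mu k-\beta w(s_1)+g(s_1)=-\tfrac{\sigma^2}{2}v_1''(s_1)+\mu\bigl(v_1'(s_1)+k\bigr)$, whose second term is strictly negative when $v_1'(s_1)<-k$; positivity is no longer automatic.

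The paper avoids both issues by choosing a different candidate: it introduces $\bar{s}_1\leqslant s_1$ and $\bar{S}_1\geqslant S_1$ defined by the exact smooth-pasting relations $v_1'(\bar{s}_1)=v_1'(\bar{S}_1)=-k$ (these exist by \eqref{eq:v'infty}--\eqref{eq:lim-v1'} and quasi-convexity of $v_1'$), and sets $V_1(x)=v_1(x)$ for $x\geqslant\bar{s}_1$, $V_1(x)=v_1(\bar{s}_1)+k(\bar{s}_1-x)$ for $x<\bar{s}_1$. This $V_1$ is $\mathcal{C}^1$ everywhere, so Corollary \ref{cor:lowerbound} yields $\mathsf{DC}(x,\phi)\geqslant V_1(x)$ for \emph{all} $\phi\in\mathcal{P}$, and the exact pasting $v_1'(\bar s_1)=-k$ makes the generator inequality on the affine region go through cleanly. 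The price is that $V_1$ no longer coincides with $\mathsf{DC}(\cdot,(s_1,S_1))$ everywhere, so the paper must separately verify that $V_1(x)=\mathsf{DC}(x,(s_1,S_1))$ exactly for $x\in[S_1-Q,\infty)$ --- which is all Theorem \ref{thm2} needs. Your treatment of the jump (obstacle) inequality is on the right track and close to the paper's (reduction to $\int[v_1'+k]\geqslant -K_1$ or $-K_2$ via quasi-convexity; the tight case indeed requires comparison with $(s_2,S_2)$ rather than just the first-order conditions), but the proposal as written does not prove the theorem in the tight-constraint case.
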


One would speculate $(s_1,S_1)$ policy to be the optimal policy when $x<S_1-Q$ naturally, which is, however, not the truth here.
For example, consider the initial inventory level $x_1=S_1-Q$ and $x_2=S_1-Q-\varepsilon$ for small $\varepsilon>0$.
Under $(s_1,S_1)$ policy, we have
\[
K(S_1-x_1)=K_1\quad\text{and}\quad K(S_1-x_2)=K_2,
\]
i.e., setup cost changes from $K_1$ to $K_2$ abruptly while initial inventory level $x$ varies slightly from $S_1-Q$ to $S_1-Q-\varepsilon$. Motivated by this, we may construct a policy which still places an order with quantity $Q$ even for initial level $x_2=S_1-Q-\varepsilon$, incurring setup cost $K_1$, and then the order-up-to inventory level becomes $S_1-\varepsilon$.
This action does not belong to $(s_1,S_1)$ policy but seems to perform better in some sense. To construct this kind of policy, we first introduce two parameters in the following lemma.

\begin{lemma}
\label{lem:bar-S}
When $A_1^*>A_2^*$, \\
$(a)$ there exists a unique solution, denoted by $\bar{S}$, in $[S_1,\infty)$ such that $v_1'(x)=-k$; and further\\
$(b)$ there exists a unique solution, denoted by $\underline{s}$, in $[s_1-Q, S_1-Q]$ such that
\begin{equation}
\label{eq:underline-s}
\mathcal{H}(x)=0,
\end{equation}
where
\begin{equation*}
\mathcal{H}(x)=\big(v_1(x+Q)+K_1+kQ\big)-\big(v_1(\bar{S})+K_2+k\cdot(\bar{S}-x)\big).
\end{equation*}
In particular, if $v_1'(s_1)=v_1'(S_1)=-k$, we have
\begin{equation}
\label{eq:underline-s<}
\underline{s}<S_1-Q.
\end{equation}
\end{lemma}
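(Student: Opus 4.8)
For part $(a)$, the plan is to exploit the strict quasi-convexity of $v_1'$ established in Lemma \ref{lem:sS-A}$(b)$ and Remark \ref{rem:vi}$(a)$. Since $v_1'$ is strictly quasi-convex with unique minimizer $x_1^*<0<S_1$, on the interval $[S_1,\infty)$ the function $v_1'$ is strictly increasing (we are to the right of the minimizer), so it can hit the level $-k$ at most once. To get existence I need $v_1'(S_1)\le -k$ and $v_1'(x)\to$ something $>-k$ (in fact $\to +\infty$) as $x\to\infty$. The lower bound $v_1'(S_1)\le-k$ is exactly Lemma \ref{lem:sS-A}$(d)$ in either of its two cases (equality when $S_1-s_1<Q$, and $\le -k$ when $S_1-s_1=Q$). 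For the behavior at $+\infty$, I would use the explicit formula \eqref{eq:vi-derivate} for $v_i'$: as $x\to\infty$ the term $\lambda_1\Lambda_1(x)$ grows like $g(x)/\lambda_1\cdot$(const) via L'Hôpital / a standard Laplace-type estimate, while $-\lambda_2\Lambda_2(x)$ is dominated by it and $A_1^*e^{-\lambda_2 x}/\lambda_2\to 0$; combined with $g'$ being positive and polynomially bounded and $g$ being coercive upward (from (A1),(A3)), one gets $v_1'(x)\to+\infty$. Hence by the intermediate value theorem there is a unique $\bar S\in[S_1,\infty)$ with $v_1'(\bar S)=-k$; when $S_1-s_1<Q$ we have $v_1'(S_1)=-k$ so $\bar S=S_1$, and when $S_1-s_1=Q$, strictness gives $\bar S>S_1$.

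For part $(b)$, the idea is again a monotonicity-plus-IVT argument, now applied to the function $\mathcal H$ on the interval $[s_1-Q,\,S_1-Q]$. Differentiating, $\mathcal H'(x)=v_1'(x+Q)+k$. On this interval $x+Q$ ranges over $[s_1,\,S_1]$, which (since $x_1^*<s_1$) lies entirely to the right of the minimizer $x_1^*$ of $v_1'$; there $v_1'$ is strictly increasing, and by Lemma \ref{lem:sS-A}$(d)$ we have $v_1'(s_1)=v_1'(S_1)\le -k$, hence actually $v_1'(x+Q)\le -k$ throughout, i.e. $\mathcal H'(x)\le 0$ on $[s_1-Q,S_1-Q]$, with strict inequality on the interior except possibly at the endpoints. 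So $\mathcal H$ is (weakly, and on the interior strictly) decreasing, giving uniqueness of the root. For existence I must check the sign of $\mathcal H$ at the two endpoints: I expect $\mathcal H(s_1-Q)\ge 0$ and $\mathcal H(S_1-Q)\le 0$. At $x=S_1-Q$ we have $\mathcal H(S_1-Q)=\big(v_1(S_1)+K_1+kQ\big)-\big(v_1(\bar S)+K_2+k(\bar S-S_1+Q)\big)=\big(v_1(S_1)-v_1(\bar S)-k(\bar S-S_1)\big)-(K_2-K_1)$; the bracket is $\le 0$ because, integrating $v_1'(t)\le -k$ from $S_1$ to $\bar S$ (using part $(a)$ and monotonicity), $v_1(\bar S)-v_1(S_1)\le -k(\bar S-S_1)$, and then subtracting the positive quantity $K_2-K_1$ keeps it $\le 0$. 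At $x=s_1-Q$ I would compare against the $A_2$-optimal policy: using $A_1^*>A_2^*$ and the characterizations in Lemma \ref{lem:sS-A}, $\bar S$ being the point where $v_1'=-k$ should make $v_1(\bar S)+k\bar S$ the minimal "shifted" value, and the subadditivity assumption $K_2\le 2K_1$ enters to control $K_1+kQ$ versus $K_2$; concretely one wants $v_1(s_1)+K_1+kQ\ge v_1(\bar S)+K_2+k(\bar S-s_1+Q)$, i.e. $v_1(s_1)-v_1(\bar S)-k(\bar S-s_1)\ge K_2-K_1$, which should follow from the definitions of $s_1,S_1$ as solutions of the KKT system together with $A_1^*>A_2^*$. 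Finally, for \eqref{eq:underline-s<}: if $v_1'(s_1)=v_1'(S_1)=-k$ (the non-tight case, so $\bar S=S_1$), then $\mathcal H(S_1-Q)=-(K_2-K_1)<0$ strictly, and since $\mathcal H$ is continuous with $\mathcal H(\underline s)=0$ and $\mathcal H$ weakly decreasing, the root $\underline s$ must lie strictly to the left of $S_1-Q$.

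The main obstacle I anticipate is pinning down the sign of $\mathcal H$ at the left endpoint $x=s_1-Q$, i.e. establishing $v_1(s_1)-v_1(\bar S)-k(\bar S-s_1)\ge K_2-K_1$. Unlike the right-endpoint computation, this is not just an integration of the inequality $v_1'\le-k$; it requires genuinely using the hypothesis $A_1^*>A_2^*$ (which, via \eqref{eq:v-Ai} and Remark \ref{rem:vi}$(b)$, controls how $v_1$ compares to $v_2$ and where $s_1,s_2$ sit) together with the KKT optimality of $(s_1,S_1)$ for $\mathcal{OP}_1$ and Assumption \ref{ass:K} ($K_2\le 2K_1$). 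I would try to rephrase the desired inequality in terms of $A_1(s_1-Q,\bar S)$ or a related quantity and show it is bounded by $A_1^*$ or by $A_2^*$ using the variational characterization, reducing it to the fact that the pair $(s_1-Q,\bar S)$ is a feasible (but not optimal) point of one of the constrained problems; the details of that reduction, including whether one compares to $\mathcal{OP}_1$ or to $\mathcal{OP}_2$, is where the real work lies. The monotonicity/uniqueness parts and part $(a)$ are routine once the explicit formulas \eqref{eq:vi-derivate} and Lemma \ref{lem:sS-A} are invoked.
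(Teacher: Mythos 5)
Part $(a)$ of your proposal matches the paper's argument and is fine. In part $(b)$, however, the step you treat as routine is where the real gap is, and the step you flag as the "main obstacle" is actually the easy one. For the right endpoint you claim the bracket $v_1(S_1)-v_1(\bar S)-k(\bar S-S_1)$ is $\leqslant 0$ by integrating $v_1'\leqslant -k$; the inequality goes the other way. Since $v_1'(y)\leqslant -k$ on $[S_1,\bar S]$, integration gives $v_1(\bar S)-v_1(S_1)\leqslant -k(\bar S-S_1)$, hence $v_1(S_1)-v_1(\bar S)-k(\bar S-S_1)\geqslant 0$ — strictly positive in the case $v_1'(S_1)<-k$ where $\bar S>S_1$. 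So $\mathcal H(S_1-Q)=\bigl(v_1(S_1)-v_1(\bar S)-k(\bar S-S_1)\bigr)-(K_2-K_1)$ is a difference of two nonnegative quantities and your argument does not show it is $\leqslant 0$. (When $v_1'(S_1)=-k$ one has $\bar S=S_1$, the bracket vanishes, and $\mathcal H(S_1-Q)=K_1-K_2<0$, so that case is fine.) The paper closes this gap by observing that $v_1'(S_1)<-k$ forces $S_1-s_1=Q$, so $S_1-Q=s_1$, and then $\mathcal H(s_1)$ is exactly $\mathsf{DC}(s_1,(s_1,S_1))$ minus the cost of the admissible policy that jumps from $s_1$ to $\bar S$ at time $0$ and thereafter follows $(s_1,S_1)$; Theorem \ref{thm2} (proved independently of this lemma) gives $\mathcal H(S_1-Q)\leqslant 0$. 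Some input of this kind is genuinely needed — the inequality is not a pure calculus fact about $v_1$.

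By contrast, the left endpoint does not require the variational reduction or the hypothesis $A_1^*>A_2^*$ that you anticipate. Using the boundary identity $v_1(s_1)=v_1(S_1)+K_1+k(S_1-s_1)$ one gets $\mathcal H(s_1-Q)=\bigl(v_1(S_1)-v_1(\bar S)-k(\bar S-S_1)\bigr)+2K_1-K_2$, and both terms are nonnegative (the first by the correctly-oriented integration above, the second by Assumption \ref{ass:K}). The uniqueness/monotonicity argument and the deduction of \eqref{eq:underline-s<} in your proposal are correct.
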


Using these two parameters, we define a generalized  policy, named $(s_1,\{S^*(x):x\leqslant s_1\})$  policy.

\begin{definition}
\label{def-policy}
A policy is called $(s_1,\{S^*(x):x\leqslant s_1\})$ policy if  it orders nothing for $x>s_1$, and orders up to level $S^*(x)$ for $x\leqslant s_1$, where
\begin{align*}
\label{eq:S-star}
S^*(x): =
\begin{cases}
S_1 & \text{for $x\in(S_1-Q,s_1]$},\\
x+Q &\text{for $x\in[\underline{s},S_1-Q]$},\\
\bar{S} &\text{for $x\in(-\infty,\underline{s})$},
\end{cases}
\end{align*}
with $\underline{s}$ and $\bar{S}$ being defined in Lemma {\rm\ref{lem:bar-S}}.
\end{definition}

Under $(s_1,\{S^*(x):x\leqslant s_1\})$ policy,
\begin{equation}
\label{eq:valfun}
\mathsf{DC}(x,(s_1,\{S^*(x):x\leqslant s_1\}))=
  \begin{cases}
  v_{1}(x), &\text{for $x \in (s_1,\infty)$},\\
  v_{1}(S_1)+K_{1}+k(S_1-x), &\text{for $x\in (S_1-Q,s_1]$},\\
  v_{1}(x+Q)+K_{1}+kQ, &\text{for $x\in [\underline{s},S_1-Q]$},\\
  v_{1}(\bar{S})+ K_{2}+k(\bar{S}-x), &\text{for $x\in(-\infty,\underline{s})$}.
\end{cases}
\end{equation}
We next compare the actions under both $(s_1,S_1)$ policy and $(s_1,\{S^*(x):x\leqslant s_1\})$ policy.
First, for $x\in(s_1,\infty)$, both policies order nothing and if $x\in[S_1-Q,s_1]$, both order items up to level $S_1$.
Furthermore, for $x\in[\underline{s},S_1-Q)$, $(s_1,S_1)$ policy orders items up to level $S_1$ while $(s_1,\{S^*(x):x\leqslant s_1\})$  policy orders items up to level $x+Q$, which is strictly smaller than $S_1$. Finally, we check the case when $x\in(-\infty,\underline{s})$ by two cases: $v_1'(s_1)=v_1'(S_1)=-k$ and $v_1'(s_1)=v_1'(S_1)<-k$ (from Lemma \ref{lem:sS-A} ($d$), $v_1'(s_1)=v_1'(S_1)\leqslant-k$ always holds).

If $v_1'(s_1)=v_1'(S_1)=-k$, it follows from the definition of $\bar{S}$ in Lemma \ref{lem:bar-S} (a) that
\begin{equation}
\label{eq:S=S}
\bar{S}=S_1,
\end{equation}
then both policies take the same action and order items up to level $S_1$.

If $v_1'(s_1)=v_1'(S_1)<-k$, for $x\in(-\infty,\underline{s})$, since $v_1'(S_1)<-k=v_1'(\bar{S})$, we have
\[
\bar{S}> S_1,
\]
and then $(s_1,S_1)$ policy orders items up to level $S_1$ while $(s_1,\{S^*(x):x\leqslant s_1\})$ policy orders items up to level $\bar{S}$.

Based on the above comparison between these two policies, we can see that $(s_1,\{S^*(x):x\leqslant s_1\})$ policy improves $(s_1,S_1)$ policy in two scenarios and thus is better than $(s_1,S_1)$ policy.
First, when $x$ is lower than $S_1-Q$ but not too low, i.e., $x\in[\underline{s},S_1-Q)$, unlike $(s_1,S_1)$ policy,
$(s_1,\{S^*(x):x\leqslant s_1\})$ policy still orders quantity $Q$ with setup cost $K_1$, avoiding abrupt jump from $K_1$ to $K_2$ when $x$ changes from $S_1-Q$ to $S_1-Q-\varepsilon$. Second, when $x$ is too low, i.e., $x\in(-\infty,\underline{s})$, $(s_1,S_1)$ policy orders up to $S_1$
while $(s_1,\{S^*(x):x\leqslant s_1\})$ policy orders up to $\bar{S}$, and both incur setup cost $K_2$.
However, $\bar{S}$ satisfies $v_1'(\bar{S})=-k$ and thus it minimizes $v_1(y)+K_2+k\cdot(y-x)$,
but it is not necessarily for $S_1$.
In summary, we have the following theorem, whose proof is in Section \ref{sec:proof-thm3}.
\begin{theorem}
\label{thm3}
If $A_1^*>A_2^*$, we have the following result.\\
$(a)$ If $v_1'(s_1)=v_1'(S_1)=-k$, for any initial level $x\in(\underline{s},S_1-Q)$, $(s_1,S_1)$ policy is strictly worse than $(s_1,\{S^*(x):x\leqslant s_1\})$ policy.\\
$(b)$ If $v_1'(s_1)=v_1'(S_1)<-k$, for any initial level $x\in(-\infty, S_1-Q)\setminus \{\underline{s}\}$, $(s_1,S_1)$ policy is strictly worse than $(s_1,\{S^*(x):x\leqslant s_1\})$ policy.
\end{theorem}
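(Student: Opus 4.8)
The plan is to compute, for each relevant initial level $x$, the difference
\[
D(x):=\mathsf{DC}\big(x,(s_1,\{S^*(x):x\leqslant s_1\})\big)-\mathsf{DC}\big(x,(s_1,S_1)\big)
\]
by reading the two cost functions off \eqref{eq:valfun} and \eqref{eq:DC}, and to show $D(x)<0$. Everything reduces to the sign of the function $\mathcal{H}$ of Lemma \ref{lem:bar-S} and of the integral $\int_{S_1}^{\bar S}(v_1'(y)+k)\,\mathrm{d}y$. The one structural fact driving the argument comes from the strict quasi-convexity of $v_1'$ with interior minimizer $x_1^*\in(s_1,S_1)$ (Remark \ref{rem:vi}($a$), Lemma \ref{lem:sS-A}($c$)) combined with Lemma \ref{lem:sS-A}($d$): since $v_1'(s_1)=v_1'(S_1)$ and $x_1^*$ lies strictly between, we get $v_1'(y)<v_1'(s_1)\leqslant -k$ for every $y\in(s_1,S_1)$, and moreover $v_1'$ is strictly increasing on $(x_1^*,\infty)\supseteq(S_1,\infty)$.

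\emph{Part ($a$).} Here $v_1'(s_1)=v_1'(S_1)=-k$, so $S_1-s_1<Q$ (Lemma \ref{lem:sS-A}($d$)), $\bar S=S_1$ (by \eqref{eq:S=S}), and $\underline s<S_1-Q<s_1$ (by \eqref{eq:underline-s<}). For $x\in(\underline s,S_1-Q)$ we have $S_1-x>Q$, so \eqref{eq:DC} gives $\mathsf{DC}(x,(s_1,S_1))=v_1(S_1)+K_2+k(S_1-x)$, while \eqref{eq:valfun} gives $\mathsf{DC}(x,(s_1,\{S^*\}))=v_1(x+Q)+K_1+kQ$; since $\bar S=S_1$ these combine to $D(x)=\mathcal{H}(x)$. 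Now $\mathcal{H}(\underline s)=0$ by \eqref{eq:underline-s}, and $\mathcal{H}'(x)=v_1'(x+Q)+k$. For $x\in(\underline s,S_1-Q)$ one has $x+Q\in(\underline s+Q,S_1)\subseteq(s_1,S_1)$, using $\underline s\geqslant s_1-Q$ from Lemma \ref{lem:bar-S}($b$), so $\mathcal{H}'(x)<0$ by the fact above. Hence $\mathcal{H}$ is strictly decreasing on $(\underline s,S_1-Q)$ and $D(x)=\mathcal{H}(x)<\mathcal{H}(\underline s)=0$.

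\emph{Part ($b$).} Here $v_1'(s_1)=v_1'(S_1)<-k$, so $S_1-s_1=Q$ (Lemma \ref{lem:sS-A}($d$)), hence $S_1-Q=s_1$, and $\bar S>S_1$ (since $v_1'(\bar S)=-k>v_1'(S_1)$, Lemma \ref{lem:bar-S}($a$)). Write $(-\infty,s_1)\setminus\{\underline s\}=(-\infty,\underline s)\cup(\underline s,s_1)$. For $x\in(-\infty,\underline s)$, both \eqref{eq:DC} and \eqref{eq:valfun} carry setup cost $K_2$, and a short computation gives $D(x)=v_1(\bar S)-v_1(S_1)+k(\bar S-S_1)=\int_{S_1}^{\bar S}(v_1'(y)+k)\,\mathrm{d}y$, whose integrand is strictly negative on $(S_1,\bar S)$ because $v_1'$ increases from $v_1'(S_1)<-k$ up to $v_1'(\bar S)=-k$; so $D(x)<0$. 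For $x\in(\underline s,s_1)=(\underline s,S_1-Q)$, the same reading of \eqref{eq:DC}, \eqref{eq:valfun} gives $D(x)=\mathcal{H}(x)+\int_{S_1}^{\bar S}(v_1'(y)+k)\,\mathrm{d}y$; the integral is $<0$ as just shown, and $\mathcal{H}(x)<0$ by the argument of part ($a$) (again $\mathcal{H}(\underline s)=0$ and $\mathcal{H}'(x)=v_1'(x+Q)+k<0$ since $x+Q\in(s_1,S_1)$ and now $v_1'<v_1'(s_1)<-k$ there), so $D(x)<0$.

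I do not expect a serious obstacle; once the lemmas are granted the proof is essentially the sign bookkeeping above. The points needing care are: (i) invoking Lemma \ref{lem:sS-A}($d$) to decide whether $S_1-s_1<Q$ or $S_1-s_1=Q$, which is what determines whether $\bar S=S_1$ and which branch of \eqref{eq:DC} is active; (ii) the endpoint arithmetic showing the two policies coincide at $x=S_1-Q$ (there the order quantity is exactly $Q$, so the setup cost is $K_1$ by \eqref{eq:setup}) and at $x=\underline s$ (where $\mathcal{H}(\underline s)=0$), which is precisely why those points are excluded from the strict comparison; and (iii) keeping $x+Q$ strictly inside $(s_1,S_1)$, which rests on $\underline s\in[s_1-Q,S_1-Q]$ from Lemma \ref{lem:bar-S}($b$). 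None of these involves any genuinely hard estimate.
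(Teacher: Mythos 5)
Your proposal is correct and follows essentially the same route as the paper: both reduce the comparison to the sign of $\mathcal{H}$ (the paper works with $\Phi_2=-\mathcal{H}$), using $\mathcal{H}(\underline{s})=0$ together with $\mathcal{H}'(x)=v_1'(x+Q)+k<0$ on $(\underline{s},S_1-Q)$, and in part ($b$) the extra term $\int_{S_1}^{\bar S}(v_1'(y)+k)\,\mathrm{d}y<0$ coming from $\bar S>S_1$. The only quibble is your inference in part ($a$) that $v_1'(s_1)=-k$ forces $S_1-s_1<Q$ (Lemma \ref{lem:sS-A}($d$) does not exclude equality holding with $S_1-s_1=Q$), but nothing in your argument actually uses this, so it is harmless; your explicit decomposition $D=\mathcal{H}+\int_{S_1}^{\bar S}(v_1'+k)$ on $(\underline{s},S_1-Q)$ in part ($b$) is, if anything, more careful than the paper's ``similar to'' shortcut there.
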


\begin{remark}
\label{rem:thm3}
In both cases, although $(s_1,S_1)$ policy and $(s_1,\{S^*(x):x\leqslant s_1\})$ policy take different actions at $x=\underline{s}$, the definition of $\underline{s}$ in \eqref{eq:underline-s} implies that the discounted costs under these two policies are the same.
\end{remark}

The above theorem tells us that if $A_1^*>A_2^*$, there must exist some $x\in(-\infty,S_1-Q)$ such that $(s_1,S_1)$ policy is not optimal. At these initial levels,  Theorem \ref{thm3} also has shown that $(s_1,\{S^*(x):x\leqslant s_1\})$ policy performs better than  $(s_1,S_1)$ policy. So whether $(s_1,\{S^*(x):x\leqslant s_1\})$  is an optimal policy when $(s_1,S_1)$ is not optimal (i.e., when $x<S_1-Q$)?  In the following theorem, we show that
under some conditions, $(s_1,\{S^*(x):x\leqslant s_1\})$ policy is optimal in a large subset of admissible policies.
The proof of this theorem can be found in Section \ref{sec:proof-thm4}.
Before presenting the theorem, we introduce the concept of the local time at some point $a$  for semimartingales.
For the semimartingale $Z$ given by (\ref{eq:Z}), since $\abs{x-a}$ is convex, one has
\begin{align}
\label{eq:A}
\abs{Z(t)-a}=\abs{Z(0)-a}+\int_{0+}^t \operatorname{sign}(Z(s-)-a)\,\mathrm{d}Z(s)+A_t^a,
\end{align}
where $\operatorname{sign}(x)$ is the sign function with $\operatorname{sign}(x)=\mathbbm{1}_{\{z>0 \}}-\mathbbm{1}_{\{z\leqslant 0\}}$, and $A_t^{a}$ is an adapted, right continuous, increasing process; see Theorem 66 of \S IV in \cite{Protter2005}.

\begin{definition}[\cite{Protter2005}]
\label{def:localtime}
Let $Z$ be the semimartingale defined by {\rm (\ref{eq:Z})}, and let $A^a$ be defined in \eqref{eq:A}. The local time at $a$ of $Z$, denoted by $L_t^a(Z)$, is defined to be the process given by
\begin{align*}
L_{t}^{a}(Z)=A_t^{a}-\sum_{0<s \leq t}\Big(\abs{Z(s)-a}-\abs{Z(s-)-a}-\Delta Z(s)(\mathbbm{1}_{\{Z(s-)>a \}}-\mathbbm{1}_{\{Z(s-)\leqslant a\}})\Big).
\end{align*}
\end{definition}

\begin{theorem}
\label{thm4}
If $A_1^*>A_2^*$ and $\Xi(\underline{s}):=\mu k+g(\underline{s})-\beta (v_1(\bar{S})+K_2+k\cdot(\bar{S}-\underline{s}))\geqslant0$, then $(s_1,\{S^*(x):x\leqslant s_1\})$ policy defined in Definition {\rm\ref{def-policy}} is optimal  in $\mathcal{P}_{\{\underline{s}\}}$ for any initial level $x\in(-\infty,S_1-Q)$,
where
\begin{equation}
\label{eq:P}
\mathcal{P}_{\{\underline{s}\}}=\Big\{\phi\in\mathcal{P}: \mathbb{E}_x\Big[\int_0^{\infty} e^{-\beta t}\,\mathrm{d}L_t^{\underline{s}}(Z_\phi)\Big]=0\Big\}
\end{equation}
with that $Z_\phi$ is the inventory level process under policy $\phi$.
Further, the discounted cost for $x\in(-\infty,S_1-Q)$ under $(s_1,\{S^*(x):x\leqslant s_1\})$ policy is given by
\eqref{eq:valfun}.
\end{theorem}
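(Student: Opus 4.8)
The plan is to apply the generalized lower bound theorem of \S\ref{sec:LBT} with the candidate cost function $w(x)$ given by the right-hand side of \eqref{eq:valfun}, restricted to the policy class $\mathcal{P}_{\{\underline{s}\}}$. First I would verify that $w$ is a legitimate test function: it is continuous on $\mathbb{R}$ (the pieces match at $s_1$, at $S_1-Q$, and at $\underline{s}$, the last by the very definition \eqref{eq:underline-s} of $\underline{s}$ via $\mathcal{H}(\underline{s})=0$), it is $\mathcal{C}^1$ except possibly at the finitely many junction points $\{s_1, S_1-Q, \underline{s}\}$ and at $0$ (where $g$ is not smooth, hence neither is $v_1$), and it is polynomially bounded (using Assumption (A5) and the explicit form \eqref{eq:v-Ai} of $v_{A_i}$). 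The point $\underline{s}$ is exactly the one excluded from the $\mathcal{C}^1$ requirement by working inside $\mathcal{P}_{\{\underline{s}\}}$, where the local-time term at $\underline{s}$ vanishes; this is the whole reason the restricted policy class appears.

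Next I would check the two inequalities required by the lower bound theorem. (i) The \emph{quasi-variational-type} inequality at jumps: for every $x$ and every admissible order size $\xi>0$, $w(x) \leqslant w(x+\xi) + K(\xi) + k\xi$. For $x > s_1$ this is the statement that not ordering beats ordering; for $x \leqslant s_1$ one must also show the chosen order (to $S_1$, to $x+Q$, or to $\bar S$ depending on the region) is optimal among all feasible jumps. Here the minimization splits according to whether the target order incurs $K_1$ or $K_2$: on the $K_1$ branch one minimizes $v_1(y)+K_1+k(y-x)$ over $y\in(x,x+Q]$, and since $v_1'(s_1)\leqslant -k$ with $v_1'$ quasi-convex (Remark \ref{rem:vi}(a)) and $v_1'(S_1)\leqslant -k$, the minimizer is at the right endpoint $y=S_1$ when $x>S_1-Q$ and at $y=x+Q$ when $x\leqslant S_1-Q$; on the $K_2$ branch one minimizes over $y\geqslant x+Q$ and the unconstrained minimizer is $\bar S$ (where $v_1'(\bar S)=-k$, Lemma \ref{lem:bar-S}(a)). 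Comparing the two branch minima is precisely what $\mathcal{H}$ and the threshold $\underline{s}$ encode, so this step reduces to the sign analysis of $\mathcal{H}$ already available from Lemma \ref{lem:bar-S}(b). (ii) The \emph{generator} inequality on the continuation/ordering regions: $\Gamma w(x) - \beta w(x) + g(x) \geqslant 0$ for all $x$ off the exceptional set. On $(s_1,\infty)$ this holds with equality by the ODE \eqref{eq:ode-1} defining $v_1$. On $(S_1-Q,s_1]$ and on $(-\infty,\underline{s})$, $w$ is affine in $x$, so $\Gamma w(x) = -\mu w'(x)$ and the inequality becomes an explicit linear-in-$g$ condition; the $\underline{s}$-branch inequality evaluated at $x=\underline{s}$ is exactly $\Xi(\underline{s})\geqslant 0$, and convexity of $g$ (Assumption (A3)) propagates it to all $x<\underline{s}$ and, for the middle region, a similar monotone argument using $g'(x)+\beta k<0$ near the relevant range (Lemma \ref{lem:sS-A}(f)) handles $(S_1-Q,s_1]$ and the translated-ODE region $[\underline{s},S_1-Q]$, where $w(x)=v_1(x+Q)+K_1+kQ$ and one uses $\Gamma v_1 - \beta v_1 + g = 0$ together with the translation.

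I would then conclude: the lower bound theorem gives $\mathsf{DC}(x,\phi)\geqslant w(x)$ for every $\phi\in\mathcal{P}_{\{\underline{s}\}}$, while a direct computation — or the observation that $(s_1,\{S^*(x):x\leqslant s_1\})$ itself lies in $\mathcal{P}_{\{\underline{s}\}}$ because the controlled process is reflected/stopped away from accumulating local time at $\underline{s}$ — shows the policy attains $w(x)$, i.e. \eqref{eq:valfun} holds and the policy is optimal in $\mathcal{P}_{\{\underline{s}\}}$. The main obstacle I anticipate is the generator inequality on the middle region $[\underline{s},S_1-Q]$ and verifying that the translated condition stays nonnegative throughout: unlike the two affine pieces, there $w$ carries the curvature of $v_1$, so one cannot reduce to a single scalar condition and must instead combine the shifted ODE identity with monotonicity of $g'+\beta k$ and the precise location of $\underline{s}$ relative to $x_1^*$ and $S_1-Q$; getting the inequalities to match up at both endpoints $\underline{s}$ and $S_1-Q$ (where $w$ is only $\mathcal{C}^0$, not $\mathcal{C}^1$, unless the corresponding one-sided derivatives are checked) is the delicate part, and the hypothesis $\Xi(\underline{s})\geqslant 0$ is exactly the extra input that makes it go through.
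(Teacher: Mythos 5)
Your proposal is correct and follows essentially the same route as the paper: take the candidate function $w=\bar V_1$ from \eqref{eq:valfun}, verify it is continuous and piecewise smooth with $\underline{s}$ as the unique point where $w'$ jumps downward (which is exactly why the class $\mathcal{P}_{\{\underline{s}\}}$ appears), check the generator inequality region by region with $\Xi(\underline{s})\geqslant 0$ entering on the branch $x<\underline{s}$ and the shifted ODE identity plus convexity of $g$ handling $[\underline{s},S_1-Q]$, check the intervention inequality \eqref{eq:lowerbound-2} via the quasi-convexity of $v_1'$ and the defining equation of $\underline{s}$, and conclude from Proposition \ref{prop:lowerbound} since the policy attains $w$. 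The paper carries out the intervention inequality as a ten-case comparison rather than your two-branch minimization, but the ingredients and the logic are the same.
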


The assumption $\Xi(\underline{s})\geqslant0$ is a technical one,
and we will see in the proof of Theorem \ref{thm4} that this assumption is used to ensure the cost function $\mathsf{DC}(x,(s_1,\{S^*(x):x\leqslant s_1\}))$ satisfies the condition \eqref{eq:lowerbound-1} in the lower bound theorem.
In fact, the condition $\Xi(\underline{s})\geqslant0$ holds in many cases, and in Section \ref{sec:condition-Q}
we will show that it always holds when the threshold $Q$ is large.

The set constraint \eqref{eq:P} is a mathematical condition for the admissible policies. We will see that a large class of admissible policies satisfy this condition. Let  $\mathcal{P}_{\{\underline{s}\}}'$ be a subset of policies that must place order up to a level higher than $\underline{s}$ once the inventory level $x$ is lower than or equal to $\underline{s}$. For any policy in $\mathcal{P}_{\{\underline{s}\}}'$, we see that the inventory level process $Z$ never pass $\underline{s}$ continuously, and thus \eqref{eq:P} holds, i.e., $\mathcal{P}_{\{\underline{s}\}}'\subset \mathcal{P}_{\{\underline{s}\}}$.

Recalling that the actions for $x\geqslant S_1-Q$ under both  $(s_1,\{S^*(x):x\leqslant s_1\})$ policy and $(s_1,S_1)$ policy are the same, thus we can integrate Theorems \ref{thm2} and \ref{thm4} to get the following result immediately.
\begin{corollary}
If $A_1^*>A_2^*$,  $(s_1,\{S^*(x):x\leqslant s_1\})$ policy is optimal in $\mathcal{P}$  for all $x\in[S_1-Q,\infty)$ while it is optimal in $\mathcal{P}_{\{\underline{s}\}}$ for $x\in(-\infty,S_1-Q)$ when $\Xi(\underline{s})\geqslant0$.
\end{corollary}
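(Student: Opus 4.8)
The plan is to invoke the generalized lower bound theorem of Section~\ref{sec:LBT} with the test function $w$ equal to the right-hand side of \eqref{eq:valfun}, and then to check that the policy of Definition~\ref{def-policy} attains $w$ and lies in $\mathcal{P}_{\{\underline{s}\}}$. First I would confirm \eqref{eq:valfun} directly: from any initial level the $(s_1,\{S^*(x):x\le s_1\})$ policy places at most one initial order bringing the level to $S^*(x)\ge s_1$, after which it coincides with the $(s_1,S_1)$ policy, whose cost-to-go on $(s_1,\infty)$ is $v_1$ by \eqref{eq:DC}; adding the initial setup-plus-proportional charge---which is $K_1$ on $[\underline{s},S_1-Q]$ since the order size there is exactly $Q$, and $K_2$ below $\underline{s}$ since there the order size strictly exceeds $Q$---gives \eqref{eq:valfun}. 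The same description shows that under this policy the controlled process stays strictly above $\underline{s}$ for all $t>0$ (it only touches $s_1>\underline{s}$, at the reorder epochs, which are a.s.\ jump times of $Z$), so $L_t^{\underline{s}}(Z)\equiv 0$ and the policy belongs to $\mathcal{P}_{\{\underline{s}\}}$; its admissibility is routine.

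The main work is verifying the hypotheses of the lower bound theorem for $w$. For regularity, $w$ is continuous---matching at $S_1-Q$ and $s_1$ is the boundary relation \eqref{eq:v=v+K}, and matching at $\underline{s}$ is precisely $\mathcal{H}(\underline{s})=0$ from Lemma~\ref{lem:bar-S}(b)---and, comparing one-sided derivatives via Lemma~\ref{lem:sS-A}(d) (and \eqref{eq:S=S} when $v_1'(s_1)=-k$), $w\in\mathcal{C}^1$ at every point except possibly $\underline{s}$, where $w'(\underline{s}^-)=-k$ while $w'(\underline{s}^+)=v_1'(\underline{s}+Q)\le -k$ because $\underline{s}+Q\in[s_1,S_1]$ and $v_1'\le v_1'(s_1)=v_1'(S_1)\le -k$ on that interval by the strict quasi-convexity of $v_1'$ (Lemma~\ref{lem:sS-A}(b)--(d)). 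This ``downward'' kink contributes to the It\^o--Tanaka expansion a nonpositive local-time term $\tfrac12\bigl(v_1'(\underline{s}+Q)+k\bigr)\,\mathbb{E}_x\!\bigl[\int_0^\infty e^{-\beta t}\,\mathrm{d}L_t^{\underline{s}}(Z)\bigr]$, which is exactly why the bound $w(x)\le\mathsf{DC}(x,\phi)$ can be asserted only after restricting to $\mathcal{P}_{\{\underline{s}\}}$, on which that term disappears; outside $\mathcal{P}_{\{\underline{s}\}}$ a policy that lingers at $\underline{s}$ need not be dominated by $w$.

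For the drift inequality $\Gamma w-\beta w+g\ge 0$ I would argue region by region. On $(s_1,\infty)$ it is the identity \eqref{eq:ode-1}; on $(S_1-Q,s_1]$ and $[\underline{s},S_1-Q]$ (the former empty when $S_1-s_1=Q$) the left side equals $\mu k+g(x)+\beta kx-\beta(v_1(S_1)+K_1+kS_1)$ and $g(x)-g(x+Q)-\beta(K_1+kQ)$ respectively, each monotone by convexity of $g$ and Lemma~\ref{lem:sS-A}(f), so it suffices to check the less favourable endpoint, where a short computation using \eqref{eq:ode-1}, $v_1'(s_1)=-k$, and $v_1''(s_1)<0<v_1''(S_1)$ (Remark~\ref{rem:vi}(a)) gives strict positivity; on $(-\infty,\underline{s})$ the left side is $\mu k+g(x)+\beta kx-\beta(v_1(\bar{S})+K_2+k\bar{S})$, which is convex and, since $\underline{s}\le S_1-Q\le s_1<x_1^*$ and by Lemma~\ref{lem:sS-A}(f), strictly decreasing on $(-\infty,\underline{s}]$, hence bounded below by its value at $\underline{s}$---which equals $\Xi(\underline{s})$, and this is the only place where the hypothesis $\Xi(\underline{s})\ge 0$ is used. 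For the intervention inequality $w(x)\le K(\xi)+k\xi+w(x+\xi)$, setting $f:=w+k\,\mathrm{id}$ one finds that $f$ is constant on $(-\infty,\underline{s}]$ and on $[S_1-Q,s_1]$, equals $(v_1+k\,\mathrm{id})(x+Q)+K_1$ on $[\underline{s},S_1-Q]$, and equals $v_1+k\,\mathrm{id}$ on $(s_1,\infty)$, with $v_1+k\,\mathrm{id}$ attaining its minimum at $\bar{S}$; a case analysis over these pieces---using $K_2\le 2K_1$ (Assumption~\ref{ass:K}), the boundary relation \eqref{eq:v=v+K}, $\mathcal{H}(\underline{s})=0$, and the quasi-convexity of $v_1'$---shows that the maximal decrease of $f$ over an interval of length at most $Q$ is $K_1$ and over an arbitrary interval is $K_2$, which is the required inequality. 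Finally $w$ is polynomially bounded because $v_1$ is (from \eqref{eq:v-Ai} and Assumption~\ref{ass:h}(A5)), supplying the transversality condition.

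With all hypotheses verified, the lower bound theorem yields $\mathsf{DC}(x,\phi)\ge w(x)$ for every $\phi\in\mathcal{P}_{\{\underline{s}\}}$ and every $x$; combined with the fact that the $(s_1,\{S^*(x):x\le s_1\})$ policy is in $\mathcal{P}_{\{\underline{s}\}}$ with cost exactly $w$, this proves its optimality in $\mathcal{P}_{\{\underline{s}\}}$, and \eqref{eq:valfun} is its cost for $x\in(-\infty,S_1-Q)$. I expect the main obstacle to be the drift inequality on the bridging regions $[\underline{s},S_1-Q]$, $(S_1-Q,s_1]$ and on $(-\infty,\underline{s})$: one must reconcile the endpoint evaluations with the one-sided smooth-pasting and boundary relations of Lemmas~\ref{lem:sS-A}(d)--(f) and~\ref{lem:bar-S}, and it is exactly there that $\Xi(\underline{s})\ge 0$ is indispensable; the intervention inequality, though elementary, is similarly delicate because $f$ is not monotone.
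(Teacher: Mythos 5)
Your proposal is, in substance, a proof of Theorem~\ref{thm4} only: you verify the hypotheses of Proposition~\ref{prop:lowerbound} for the test function $w=\mathsf{DC}(\cdot,(s_1,\{S^*(x):x\leqslant s_1\}))$ of \eqref{eq:valfun} and conclude $\mathsf{DC}(x,\phi)\geqslant w(x)$ for all $\phi\in\mathcal{P}_{\{\underline{s}\}}$. That part of the argument is essentially the paper's own proof of Theorem~\ref{thm4} (same region-by-region drift inequality with $\Xi(\underline{s})\geqslant0$ entering only on $(-\infty,\underline{s})$, same identification of the single downward kink at $\underline{s}$, and your reformulation of \eqref{eq:lowerbound-2} via $f=w+k\,\mathrm{id}$ is just a repackaging of the paper's case analysis). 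But the corollary makes a strictly stronger claim for $x\in[S_1-Q,\infty)$: optimality in the \emph{full} class $\mathcal{P}$, and without assuming $\Xi(\underline{s})\geqslant0$. Your argument cannot deliver this, for two reasons. First, because $w$ has a genuine downward kink at $\underline{s}$, the generalized lower bound theorem only gives $\mathsf{DC}(x,\phi)\geqslant w(x)$ on the proper subset $\mathcal{P}_{\{\underline{s}\}}$; a policy outside that subset is simply not controlled by your bound, even for initial levels $x\geqslant S_1-Q$. Second, your verification of \eqref{eq:lowerbound-1} on $(-\infty,\underline{s})$ uses $\Xi(\underline{s})\geqslant0$, so even the restricted bound is unavailable in the regime where the corollary's first clause is asserted unconditionally.

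The missing ingredient is Theorem~\ref{thm2} and the observation that for every initial level $x\geqslant S_1-Q$ the $(s_1,\{S^*(x):x\leqslant s_1\})$ policy and the $(s_1,S_1)$ policy take identical actions, hence have identical costs. Theorem~\ref{thm2} is proved with a \emph{different} test function, $V_1$ of \eqref{eq:V1-def}, which is $\mathcal{C}^1$ on all of $\mathbb{R}$ (no kink at $\underline{s}$), so Corollary~\ref{cor:lowerbound} yields $\mathsf{DC}(x,\phi)\geqslant V_1(x)$ for \emph{every} $\phi\in\mathcal{P}$; since $V_1$ coincides with the policy's cost exactly on $[S_1-Q,\infty)$ (property \eqref{eq:V1-con2}), this gives optimality in $\mathcal{P}$ there, with no hypothesis on $\Xi$. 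The corollary is then the concatenation of that statement with Theorem~\ref{thm4}. Your write-up needs to either cite Theorem~\ref{thm2} for the first clause or reproduce its argument with the kink-free majorant $V_1$; as it stands, the first clause of the corollary is unproved.
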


We will prove Theorem \ref{thm3} directly, and use two steps to prove Theorems \ref{thm1}, \ref{thm2}, and \ref{thm4} as follows.
First, we provide a comparison theorem under discounted cost version and then establish a generalized lower bound theorem (also called verification theorem); see Section \ref{sec:LBT}.
Second, we prove the optimality of the selected policies in these three theorems by checking associated cost functions satisfy the lower bound theorem; see Section \ref{sec:proof}.

\section{Generalized lower bound theorem}
\label{sec:LBT}

In this section, we establish a generalized lower bound theorem (Proposition \ref{prop:lowerbound}), which generalizes the lower bound theorem in literature (see e.g., \cite{DaiYao2013b},  \cite{HarrisonSellkeTaylor1983}, and \cite{OrmeciDaiVandeVate2008})  that requires continuously differentiable functions. Our lower bound theorem could apply to the functions that even do not have continuous first-order derivative at finite points.

To establish the lower bound theorem, we need a discounted cost version comparison theorem, which can be found in \cite{Jia2016} (see  an average cost version in \cite{HeYaoZhang2017}).

\begin{proposition}[Comparison Theorem]
\label{prop:comparison}
For any admissible policy $\phi\in\mathcal{P}$, there exists a sequence of admissible policies
\[
\phi_m\in\mathcal{P}^m=\{\phi\in\mathcal{P}: Z(t)\leqslant m \text{ at any ordering time $t$}\},\quad m=1,2,\cdots,
\]
such that $\lim_{m\to\infty} \mathsf{DC}(x,\phi_m)\leqslant  \mathsf{DC}(x,\phi)$.
\end{proposition}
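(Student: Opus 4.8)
The plan is to show that any admissible policy $\phi$ can be approximated from above, in discounted cost, by policies that never order from a level above $m$. The natural construction is a truncation: given $\phi = \{(\tau_n,\xi_n)\}$, define $\phi_m$ by suppressing any order that would occur at a moment when the current inventory level exceeds $m$, i.e. delete the pair $(\tau_n,\xi_n)$ whenever $Z_\phi(\tau_n-) > m$ (or, more carefully, whenever the would-be order-up-to level or pre-order level is above $m$), and keep all other orders unchanged. Because $\mu>0$, the uncontrolled drift pushes the process down, so after deleting high orders the process started at $x$ will, with probability one, eventually return below $m$ and the retained orders resume. The first thing I would do is make this construction precise, check that $\phi_m$ is still admissible (the stopping-time/measurability structure is inherited, and the order-sum integrability condition only improves since we delete orders), and verify $\phi_m \in \mathcal{P}^m$ by construction.

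Next I would compare $\mathsf{DC}(x,\phi_m)$ with $\mathsf{DC}(x,\phi)$. Deleting orders strictly reduces the ordering-cost term $\sum e^{-\beta\tau_n}(K(\xi_n)+k\xi_n)$, so the only danger is that the holding/backorder term $\E_x[\int_0^\infty e^{-\beta t} g(Z(t))\,dt]$ grows. Since the deleted orders were all upward jumps from levels above $m$, the truncated path $Z_{\phi_m}$ lies \emph{below} $Z_\phi$ on the relevant excursions; by convexity of $g$ and the fact that $g$ is increasing on $[0,\infty)$ (Assumption \ref{ass:h}(A3)), on the region $z>m$ (choosing $m\geqslant 0$) lowering $z$ lowers $g(z)$. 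The subtlety is that the two paths re-couple after each high excursion, so one should argue excursion-by-excursion (or via a coupling/pathwise domination over the random set of times where $Z_\phi$ is above $m$) that the holding-cost increase incurred on $\{Z_\phi > m\}$ is controlled. Using the polynomial bound (A5) on $g$ together with the polynomial moment bounds on $\sup_{s\leqslant t}|Z_\phi(s)|$ available for admissible $\phi$, and the exponential discount factor $e^{-\beta t}$, one obtains that the extra holding cost over the set where $Z_\phi>m$ is at most $\E_x[\int_0^\infty e^{-\beta t} g(Z_\phi(t)) \mathbbm{1}_{\{Z_\phi(t)>m\}}\,dt]$, which tends to $0$ as $m\to\infty$ by dominated convergence. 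Combining, $\mathsf{DC}(x,\phi_m) \leqslant \mathsf{DC}(x,\phi) + \varepsilon_m$ with $\varepsilon_m\to 0$, giving $\limsup_m \mathsf{DC}(x,\phi_m)\leqslant \mathsf{DC}(x,\phi)$.

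The main obstacle I anticipate is the pathwise bookkeeping of the truncation: after an order is deleted, the later order times $\tau_n$ of $\phi$ may now occur at levels that are themselves above $m$ (because an earlier upward jump was removed, the process is lower, but a \emph{subsequent} planned order-up-to target might still exceed $m$), so the construction must be applied recursively and one must confirm it terminates with a well-defined policy and that the comparison of paths $Z_{\phi_m}\leqslant Z_\phi$ (on the appropriate time set) genuinely holds after all deletions. A clean way to handle this is to define the deletion set as a stopping-time-measurable subset of the index set and to check the domination inductively on $n$. Since this is precisely the content of the cited comparison theorem in \cite{Jia2016} (discounted version) and \cite{HeYaoZhang2017} (average-cost version), I would, in the interest of space, carry out the construction and the path-domination lemma in detail and then invoke dominated convergence with the (A5)-based bound to close the estimate; the remaining steps are routine.
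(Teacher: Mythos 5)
The paper does not actually prove Proposition \ref{prop:comparison}: it states the result and cites \cite{Jia2016} for the discounted version (and \cite{HeYaoZhang2017} for the average-cost version), so there is no in-text argument to match your sketch against. Judged on its own terms, your sketch contains a genuine gap at its central step. Under the pure-deletion construction you describe --- keep the same random variables $(\tau_n,\xi_n)$ on the same probability space and drop those with $Z_\phi(\tau_n-)>m$ --- the two paths do \emph{not} ``re-couple after each high excursion.'' One has identically $Z_{\phi_m}(t)=Z_\phi(t)-D(t)$ with $D(t)=\sum_{\text{deleted }n:\,\tau_n\leqslant t}\xi_n$ nondecreasing, so every deletion shifts the truncated path down \emph{permanently}. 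The inequality $g(Z_{\phi_m}(t))\leqslant g(Z_\phi(t))$ then holds only while $Z_{\phi_m}(t)\geqslant 0$; once the truncated path is pushed below $0$, $g$ is decreasing there (Assumption \ref{ass:h}(A3)), so the lower path incurs \emph{larger} (backorder) cost, and this excess occurs on the set $\{Z_\phi\leqslant m\}$ as well. Consequently your proposed error bound $\mathbb{E}_x[\int_0^\infty e^{-\beta t}g(Z_\phi(t))\mathbbm{1}_{\{Z_\phi(t)>m\}}\,\mathrm{d}t]$ does not dominate the actual cost increase, and the dominated-convergence step does not close.

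Repairing this is exactly where the substance of the cited lemma lies: one must either insert compensating (catch-up) orders so that the truncated path genuinely rejoins the original after each suppressed excursion --- and then control the additional ordering cost, which is delicate because each such order carries a setup cost of at least $K_1>0$, so one cannot allow infinitely many of them without a discounting/counting argument --- or use a different modification of $\phi$ altogether. Your final paragraph correctly identifies the recursive bookkeeping as the hard part, but the claimed path domination $Z_{\phi_m}\leqslant Z_\phi$ together with ``cost only decreases where they differ'' is false as stated, so the argument as written would fail rather than merely be incomplete. Since the paper itself delegates this to \cite{Jia2016}, the defensible options are to cite that proof (as the paper does) or to carry out the compensated construction in full; the uncompensated deletion does not suffice.
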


By Comparison Theorem, it suffices to search an optimal policy in $\bar{\mathcal{P}}$, where
\[
\bar{\mathcal{P}}=\cup_{m=1}^{\infty}\mathcal{P}^m.
\]
Now, we are ready to show a generalized lower bound theorem, whose proof is shown at the end of this section.

\begin{proposition}[Generalized Lower Bound Theorem]
\label{prop:lowerbound}
 Let $f:\mathbb{R}\to\mathbb{R}$ be a continuous function with $f'$ and $f''$ continuous except at a finite set $\mathcal{S}=\{z_1,z_2,\ldots,z_{I}\}$, and the limits $f'(z_i\pm)\triangleq\lim_{x\rightarrow z_i\pm}f'(x)$,  $f''(z_i\pm)\triangleq\lim_{x\rightarrow z_i\pm}f''(x)$ for $z_i\in \mathcal{S}$ exist and are finite.  Assume that
\begin{equation}
\label{eq:lowerbound-1}
\Gamma f(x)-\beta f(x)+g(x)\geqslant 0 \quad \text{for any $x\in\mathbb{R}\setminus\mathcal{S}$}.
\end{equation}
Further assume that
\begin{equation}
\label{eq:lowerbound-2}
f(x_2)-f(x_1)\geqslant -K(x_2-x_1)-k\cdot(x_2-x_1) \quad \text{for any $x_1<x_2$},
\end{equation}
and that there exist positive constants $a_0$, $a_1$ and a positive integer $n$ such that
\begin{equation}
\label{eq:lowerbound-3}
\abs{f'(x)}<a_0+a_1 x^n,\quad \text{$\forall$ $x\in[0,\infty)\setminus \mathcal{S}$},
\end{equation}
and
\begin{equation}
\label{eq:lowerbound-4}
\abs{f'(x)}<a_0, \quad \text{$\forall$ $x\in(-\infty,0)\setminus \mathcal{S}$}.
\end{equation}
Then, $\mathsf{DC}(x,\phi)\geqslant f(x)$ for any initial inventory level $x\in\mathbb{R}$ and any admissible policy
\begin{align*}
\phi\in\mathcal{P}^f=\{\phi\in\mathcal{P}: \mathbb{E}_x\Big[\int_0^{\infty} e^{-\beta t} \,\mathrm{d} L_t^{z_i}(Z_\phi)\Big]=0, \forall z_i\in\mathcal{S}_f\},
\end{align*}
where $\mathcal{S}_f=\{z_i\in\mathcal{S}:f'(z_i+)-f'(z_i-)<0\}$, and $Z_\phi$ is the inventory level process with policy $\phi$.
\end{proposition}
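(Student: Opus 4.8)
The plan is to establish the inequality $\mathsf{DC}(x,\phi)\geqslant f(x)$ by applying an Itô--Tanaka--Meyer formula to the process $e^{-\beta t}f(Z_\phi(t))$, exploiting the hypotheses \eqref{eq:lowerbound-1}--\eqref{eq:lowerbound-4} to control the resulting terms, and then letting $t\to\infty$. By the Comparison Theorem (Proposition \ref{prop:comparison}) it suffices to prove the bound for $\phi\in\bar{\mathcal{P}}$, so I may assume $Z_\phi(t)$ is bounded above by some constant $m$ at all ordering times; this is what lets me use the polynomial growth bound \eqref{eq:lowerbound-3} on $[0,\infty)$ together with the boundedness \eqref{eq:lowerbound-4} on $(-\infty,0)$ and the polynomial growth of $g$ (Assumption \ref{ass:h}(A5)) to guarantee all the expectations below are finite and the martingale part has zero mean.

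First I would write, for fixed $t$,
\[
e^{-\beta t}f(Z_\phi(t))=f(x)+\int_{0+}^t e^{-\beta s}\big(\Gamma f-\beta f\big)(Z_\phi(s-))\,\mathrm{d}s+\int_{0+}^t e^{-\beta s}\sigma f'(Z_\phi(s-))\,\mathrm{d}B(s)+J_t+R_t,
\]
where $J_t$ collects the jump contributions $\sum_{0<s\leqslant t}e^{-\beta s}\big(f(Z_\phi(s))-f(Z_\phi(s-))\big)$ coming from the orders, and $R_t$ is the local-time correction term arising because $f'$ is only piecewise $\mathcal{C}^1$; by the generalized Itô formula for convex-type/piecewise-$\mathcal{C}^1$ functions (or equivalently by combining Itô's formula on the smooth pieces with the local-time occupation formula at the points $z_i$), $R_t=\tfrac12\sum_{z_i\in\mathcal{S}}\big(f'(z_i+)-f'(z_i-)\big)\int_{0+}^t e^{-\beta s}\,\mathrm{d}L_s^{z_i}(Z_\phi)$. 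Rearranging and using \eqref{eq:lowerbound-1} pointwise (the set $\mathcal S$ is Lebesgue-null, so it does not affect the $\mathrm{d}s$ integral) gives
\[
\int_{0+}^t e^{-\beta s}g(Z_\phi(s-))\,\mathrm{d}s \;\geqslant\; f(x)-e^{-\beta t}f(Z_\phi(t))-\int_{0+}^t e^{-\beta s}\sigma f'(Z_\phi(s-))\,\mathrm{d}B(s)-J_t-R_t.
\]

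Next I would handle the three remaining terms. For $J_t$: at an ordering time $\tau_n$ the process jumps from $Z_\phi(\tau_n-)$ up by $\xi_n$, and \eqref{eq:lowerbound-2} applied with $x_1=Z_\phi(\tau_n-)$, $x_2=Z_\phi(\tau_n-)+\xi_n$ gives $f(Z_\phi(\tau_n))-f(Z_\phi(\tau_n-))\geqslant -K(\xi_n)-k\xi_n$, so $-J_t\geqslant \sum_{n:\tau_n\leqslant t}e^{-\beta\tau_n}\big(K(\xi_n)+k\xi_n\big)\geqslant 0$ termwise (matching exactly the ordering-cost term in $\mathsf{DC}$). For the stochastic integral: the growth conditions \eqref{eq:lowerbound-3}--\eqref{eq:lowerbound-4} plus the fact that $\E_x[\sup_{s\leqslant t}Z_\phi(s)^{2n}]<\infty$ on $\bar{\mathcal P}$ (standard for Brownian motion with drift plus controlled, admissible jumps bounded above at order times) make it a true martingale, so its expectation is $0$. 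For $R_t$: by definition $\mathcal{S}_f$ is precisely the set of $z_i$ where the jump $f'(z_i+)-f'(z_i-)$ is negative, and for $\phi\in\mathcal{P}^f$ the local time at each such $z_i$ contributes zero in expectation; at the points where the jump is nonnegative the corresponding term is $\geqslant 0$ (since $L^{z_i}$ is increasing), hence $\E_x[-R_t]\geqslant 0$. Taking expectations, then letting $t\to\infty$ and using $e^{-\beta t}f(Z_\phi(t))\to 0$ in expectation (again via the polynomial bound on $|f|$, which follows by integrating \eqref{eq:lowerbound-3}--\eqref{eq:lowerbound-4}, together with the moment bound on $Z_\phi$) and monotone/dominated convergence on the remaining terms yields $\E_x\big[\int_0^\infty e^{-\beta s}g(Z_\phi(s))\,\mathrm{d}s+\sum_n e^{-\beta\tau_n}(K(\xi_n)+k\xi_n)\big]\geqslant f(x)$, i.e. $\mathsf{DC}(x,\phi)\geqslant f(x)$.

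The main obstacle is the careful treatment of $R_t$ and the sign bookkeeping: I need the generalized Itô/Itô--Tanaka formula in the right form for a function that is continuous, piecewise $\mathcal{C}^2$, with prescribed one-sided derivative limits at finitely many points and a semimartingale $Z$ that itself has jumps — so the local-time term must be the one appearing in Definition \ref{def:localtime}, and the jump term $J_t$ must use the left limits $Z_\phi(s-)$ consistently (note that at an ordering time the jump is upward, so $\operatorname{sign}$ and indicator conventions line up with $f$ being compared across an upward jump). A secondary technical point is justifying the true-martingale property and the limit $\E_x[e^{-\beta t}f(Z_\phi(t))]\to 0$ uniformly enough to pass to the limit; this is where restricting to $\bar{\mathcal P}$ via Proposition \ref{prop:comparison} and invoking (A5) and Remark \ref{rem:h}(b) does the work. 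Everything else is routine once these two points are pinned down.
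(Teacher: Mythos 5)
Your proposal follows essentially the same route as the paper's proof: restrict to $\bar{\mathcal{P}}$ via Proposition \ref{prop:comparison}, write $f$ as a difference of convex functions and apply the It\^o--Meyer formula so that the non-smoothness at $\mathcal{S}$ shows up as the local-time correction $\frac12\sum_i(f'(z_i+)-f'(z_i-))\int e^{-\beta s}\,\mathrm{d}L_s^{z_i}(Z)$, use \eqref{eq:lowerbound-1} on the drift, \eqref{eq:lowerbound-2} on the order jumps, \eqref{eq:lowerbound-3}--\eqref{eq:lowerbound-4} for the martingale property, and the definition of $\mathcal{P}^f$ for the sign bookkeeping of the local times. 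All of that matches the paper.

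The one thin spot is the terminal term. You assert $\mathbb{E}_x[e^{-\beta t}f(Z_\phi(t))]\to 0$ "via the polynomial bound on $|f|$ together with the moment bound on $Z_\phi$", where the moment bound you invoke is $\mathbb{E}_x[\sup_{s\leqslant t}Z_\phi(s)^{2n}]<\infty$. Finiteness of that quantity for each fixed $t$ does not yield decay of $e^{-\beta t}\mathbb{E}_x[|f(Z_\phi(t))|]$; you would need the moments to grow subexponentially in $t$. That is in fact true on $\bar{\mathcal{P}}$ (the positive part of $Z$ is controlled because orders are capped at level $m$, and the negative part is dominated by the uncontrolled path $x-\mu t+\sigma B(t)$ since all control jumps are upward), but you never say this, and it is precisely the point the paper treats differently: the cited Lemma 3 of \cite{HeYaoZhang2017} only gives $\frac1t\mathbb{E}_x[|f(Z(t))\mathbbm{1}_{[0,\infty)}(Z(t))|]\to 0$, i.e.\ control of the \emph{positive} region, and the paper then argues by dichotomy --- either $\liminf_t\mathbb{E}_x[e^{-\beta t}f(Z(t))]\leqslant 0$ and the bound follows, or the terminal term stays positive, which forces $\mathbb{E}_x[|Z(t)|]$ to grow like $e^{\beta t}$ and hence $\mathsf{DC}(x,\phi)=\infty$ via the linear lower bound on $g$, so the inequality holds trivially. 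Either supply the polynomial-in-$t$ growth of the moments explicitly or adopt the paper's dichotomy; as written, this final limit is asserted rather than proved.
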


If $f'$ is continuous in $\mathbb{R}$ or $f'(z_i+)-f'(z_i-)\geqslant0$ for any $z_i\in\mathcal{S}$, the set $\mathcal{S}_f$ would be empty, and then
\[
\mathcal{P}^f=\mathcal{P}.
\]
Thus, we have the following corollary.
\begin{corollary}
\label{cor:lowerbound}
In Proposition {\rm \ref{prop:lowerbound}}, if we further assume that $f'$ is continuous or $f'(z_i+)-f'(z_i-)\geqslant0$ for any $z_i\in\mathcal{S}$, then $\mathsf{DC}(x,\phi)\geqslant f(x)$ for any initial inventory level $x\in\mathbb{R}$ and any admissible policy $\phi\in\mathcal{P}$.
\end{corollary}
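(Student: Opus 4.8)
The plan is to run the classical verification (lower-bound) argument, but with the Meyer--It\^o/Tanaka change-of-variables formula in place of the ordinary It\^o formula, so that the kinks of $f$ at the points of $\mathcal S$ produce local-time terms which are then controlled exactly by the defining property of $\mathcal P^f$. First I would reduce the problem: by the Comparison Theorem (Proposition~\ref{prop:comparison}) it suffices to prove $\mathsf{DC}(x,\phi)\geqslant f(x)$ for $\phi\in\bar{\mathcal P}=\cup_m\mathcal P^m$, and we may assume $\mathsf{DC}(x,\phi)<\infty$ (otherwise there is nothing to prove); in particular the ordering-cost series is a.s.\ finite, so there are only finitely many orders on each bounded time interval. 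Fix such a $\phi$, write $Z=Z_\phi$, and use the convention $Z(0-)=x$ so that a possible order at $\tau_0=0$ is included among the jumps.

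Next I would set up the generalized It\^o expansion. Each hypothesis on $f$ forces $f$ to be, on every compact interval, a difference of convex functions: its distributional second derivative is the locally bounded density $f''(x)\,\mathrm{d}x$ on $\mathbb R\setminus\mathcal S$ plus atoms of mass $f'(z_i+)-f'(z_i-)$ at the $z_i$. Applying the change-of-variables formula for semimartingales (\S IV of \cite{Protter2005}) to $f(Z(\cdot))$, integrating against $e^{-\beta t}$, merging the Lebesgue part of the time integral with the continuous-quadratic-variation term (using $\langle Z^c\rangle_t=\sigma^2 t$ and that the continuous martingale part of $Z$ is $\sigma B$), and noting that $Z$ spends zero Lebesgue time in the finite set $\mathcal S$, I obtain, along a localizing sequence of stopping times,
\begin{align*}
e^{-\beta t}f(Z(t))={}&f(x)+\int_0^t e^{-\beta s}\big(\Gamma f-\beta f\big)(Z(s))\,\mathrm{d}s+\sigma\int_0^t e^{-\beta s}f'(Z(s-))\,\mathrm{d}B(s)\\
&{}+\frac12\sum_{i=1}^{I}\big(f'(z_i+)-f'(z_i-)\big)\int_0^t e^{-\beta s}\,\mathrm{d}L_s^{z_i}(Z)+\sum_{n:\,\tau_n\leqslant t}e^{-\beta\tau_n}\big(f(Z(\tau_n))-f(Z(\tau_n-))\big).
\end{align*}

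Then I would bound each term from below. Hypothesis \eqref{eq:lowerbound-1} gives $(\Gamma f-\beta f)(Z(s))\geqslant-g(Z(s))$ a.e.; hypothesis \eqref{eq:lowerbound-2} applied with $x_1=Z(\tau_n-)$, $x_2=Z(\tau_n)=Z(\tau_n-)+\xi_n$ (noting $\xi_n\geqslant0$) gives $f(Z(\tau_n))-f(Z(\tau_n-))\geqslant-K(\xi_n)-k\xi_n$; and for the local-time sum I split the index set, since for $z_i\notin\mathcal S_f$ the coefficient is $\geqslant0$ and $L^{z_i}(Z)$ is increasing, whereas for $z_i\in\mathcal S_f$ the defining property of $\mathcal P^f$ is precisely $\mathbb E_x[\int_0^\infty e^{-\beta s}\,\mathrm{d}L_s^{z_i}(Z)]=0$. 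Rearranging, localizing with $\theta_N=\inf\{t:|Z(t)|\geqslant N\}$ so that the stochastic integral is a genuine mean-zero martingale, taking $\mathbb E_x$, and letting $N\to\infty$ by monotone convergence (valid because $g\geqslant0$ by (A1),(A3) and the ordering costs are nonnegative), I reach
\begin{align*}
\mathbb E_x\Big[\int_0^t e^{-\beta s}g(Z(s))\,\mathrm{d}s+\!\!\sum_{n:\,\tau_n\leqslant t}\!\!e^{-\beta\tau_n}\big(K(\xi_n)+k\xi_n\big)\Big]\;\geqslant\;f(x)-\mathbb E_x\big[e^{-\beta t}f(Z(t))\big].
\end{align*}
Finally, letting $t\to\infty$: integrating \eqref{eq:lowerbound-3}--\eqref{eq:lowerbound-4} gives $|f(y)|\leqslant c_0+c_1|y|^{n+1}$, and since orders only raise $Z$ one has $Z(t)^-\leqslant|x|+\mu t+\sigma|B(t)|$, while for $\phi\in\mathcal P^m$ one has $Z(t)^+\leqslant\max(|x|,m)+\sigma\big(|B(t)|+\sup_{s\leqslant t}|B(s)|\big)$, so $\mathbb E_x[|Z(t)|^{n+1}]$ is at most polynomial in $t$ and $e^{-\beta t}\mathbb E_x[|f(Z(t))|]\to0$; monotone convergence on the left then yields $\mathsf{DC}(x,\phi)\geqslant f(x)$, and Corollary~\ref{cor:lowerbound} is the special case $\mathcal S_f=\emptyset$.

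The main obstacle is not the algebra but the two places where the generalization beyond the $\mathcal C^1$ setting bites: justifying the It\^o/Tanaka expansion at the kinks (identifying the local-time atoms and checking the relevant integrability so that the expansion and the localization are legitimate), and the bookkeeping that turns the ``concave-kink'' local-time terms (those indexed by $\mathcal S_f$) into an exact zero under $\phi\in\mathcal P^f$ while the ``convex-kink'' terms keep a favorable sign. Coupled with the polynomial-growth estimates needed both to make the stochastic integral a true martingale and to send $e^{-\beta t}\mathbb E_x[f(Z(t))]$ to zero, this is where the care must go; the remainder is a routine adaptation of the standard verification argument.
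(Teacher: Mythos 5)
Your proposal is correct, and it follows essentially the same route as the paper: the paper's own proof of Corollary~\ref{cor:lowerbound} is the one-line observation that the extra hypothesis forces $\mathcal{S}_f=\emptyset$, hence $\mathcal{P}^f=\mathcal{P}$, and Proposition~\ref{prop:lowerbound} applies; what you have written is, in effect, a re-derivation of Proposition~\ref{prop:lowerbound} itself (Meyer--It\^o/Tanaka expansion, local-time atoms at the kinks, sign/zero-expectation bookkeeping split according to $\mathcal{S}_f$, conditions \eqref{eq:lowerbound-1}--\eqref{eq:lowerbound-2} for the drift and jump terms) followed by the specialization $\mathcal{S}_f=\emptyset$. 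The one genuine divergence is the treatment of the terminal term: you prove directly that $e^{-\beta t}\mathbb{E}_x[|f(Z(t))|]\to 0$ by combining the polynomial bound on $f$ obtained from \eqref{eq:lowerbound-3}--\eqref{eq:lowerbound-4} with pathwise moment bounds on $Z(t)$ valid for $\phi\in\mathcal{P}^m$ (orders only raise $Z$, and post-order levels are capped by $m$), whereas the paper argues by dichotomy: either $\liminf_t\mathbb{E}_x[e^{-\beta t}f(Z(t))]\leqslant 0$ and one concludes at once, or the liminf is positive, in which case \eqref{eq:lim=0} forces $\mathbb{E}_x[|Z(t)|]$ to grow like $e^{\beta t}$ and hence $\mathsf{DC}(x,\phi)=\infty$. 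Your version is slightly more self-contained (it does not invoke \eqref{eq:lim=0}) at the cost of the explicit upper bound on $Z^+$ over $\mathcal{P}^m$; both are valid, and the remaining technical points you flag (justifying the localization and the uniform integrability needed to pass $N\to\infty$) are routine given those same moment bounds.
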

Since Theorems \ref{thm1} and Theorem \ref{thm2} show the optimality of selected policies in whole $\mathcal{P}$, their proofs need Corollary \ref{cor:lowerbound}; while Theorem \ref{thm4} shows the optimality in a subset, its proof needs Proposition \ref{prop:lowerbound}.
At the end of this section, we give the proof of Proposition \ref{prop:lowerbound}.

\begin{proof}[Proof of Proposition \ref{prop:lowerbound}]
By Proposition \ref{prop:comparison}, it suffices to consider the policies in $\bar{\mathcal{P}}$ with
$\bar{\mathcal{P}}=\cup_{m=1}^{\infty}\mathcal{P}^m$. When the inventory process $Z$ is given by \eqref{eq:Z} with $\phi\in\bar{\mathcal{P}}$, it follows from Lemma 3 of \cite{HeYaoZhang2017} that function $f$ with properties \eqref{eq:lowerbound-3}-\eqref{eq:lowerbound-4} satisfies
\begin{align}
&\mathbb{E}_x\big[\abs{f(Z(t))}\big]<\infty\quad \forall t\geqslant 0,\notag\\
& \mathbb{E}_x\Big[\int_0^t f'(Z(s))^2\,\mathrm{d}s\Big]<\infty\quad \forall t\geqslant 0,\quad\text{and}\label{eq:E<infty}\\
&\lim_{t\to\infty}\frac{1}{t}\mathbb{E}_x\big[\abs{f(Z(t))\cdot \mathbbm{1}_{[0,\infty)}(Z(t))}\big]=0.\label{eq:lim=0}
\end{align}
It follows from Problem 6.24 of \S3.6 in \cite{KaratzasShreve1991} that,
if function $f:\mathbb{R}\to\mathbb{R}$ is continuous and $f'$ and $f''$ exist and are continuous except at a finite set $\mathcal{S}=\{z_1,z_2,\ldots,z_{I}\}$, and the limits $\lim_{x\rightarrow z_i\pm}f'(x)$ and  $\lim_{x\rightarrow z_i\pm}f''(x)$, $i=1,\ldots, I$, exist and are finite,
then function $f$ can be written as the difference of two convex functions.
Since $Z$ is a semimartingale, by the It\^{o} formula (see e.g., Theorem 70 of Section IV in \cite{Protter2005} and Proposition 4.12 of  \cite{Harrison2013}), we have
\begin{align}
\label{eq:lci}
f(Z(t))=&f(Z(0))+\sum_{0<s\leqslant t}(f(Z(s))-f(Z(s-))-f'(Z(s-)\Delta Z(s))\\
&+\int_{0}^{t}f'(Z(s-))\,\mathrm{d}Z(s)+\frac{1}{2}\int_{-\infty}^{+\infty}L_{t}^{z}(Z)\nu(\mathrm{d}z),\notag
\end{align}
where $f'$ is the left derivative of $f$, 
and $\nu$ is the signed measure (when restricted to compacts) defined by
$\nu(\mathrm{d}z)=f''(z)\,\mathrm{d}z$ for $z\in\mathbb{R}\setminus \mathcal{S}$
and $\nu\{z\}=f'(z+)-f'(z-)$ for $z\in\mathcal{S}$.
Thus, it follows from Corollary 9.47 in \cite{HeWangYan1992} that the last term in \eqref{eq:lci} can be calculated as
\begin{align*}
\frac{1}{2}\int_{-\infty}^{+\infty}L_{t}^{z}(Z)\nu(\,\mathrm{d}z)
=&\frac{1}{2}\Big[\int_{-\infty}^{+\infty}L_{t}^{z}(Z)f''(z)\,\mathrm{d}z
+\sum_{i=1}^{I}L_{t}^{z_{i}}(Z)(f'(z_{i}+)-f'(z_{i}-))\Big]\\
=&\frac{1}{2}\Big[\int_{0}^{t}f''(Z(s))\,\mathrm{d}[Z,Z]_{s}^{c}
+\sum_{i=1}^{I}L_{t}^{z_{i}}(Z)(f'(z_{i}+)-f'(z_{i}-))\Big]\\
=&\frac{1}{2}\Big[\int_{0}^{t}\sigma^2f''(Z(s))\,\mathrm{d}s
+\sum_{i=1}^{I}L_{t}^{z_{i}}(Z)(f' (z_{i}+)-f'(z_{i}-))\Big],
\end{align*}
and then \eqref{eq:lci} becomes
\begin{align*}
f(Z(t))
=&f(Z(0))+\int_{0}^{t}\Gamma f(Z(s))\,\mathrm{d}s+\int_{0}^{t}\sigma f'(Z(s))\,\mathrm{d}B(s)\\
&+\sum_{0<s\leqslant t}(f(Z(s))-f(Z(s-))+\frac{1}{2}\sum_{i=1}^{I}L_{t}^{z_{i}}(Z)
(f'(z_{i}+)-f'(z_{i}-)).
\end{align*}
Applying the formula of integration by parts, we have
\begin{align}
\label{eq:lt}
e^{-\beta t}f(Z(t))
=&f(Z(0))+\int_{0}^{t}e^{-\beta s}\,\mathrm{d}f(Z(s))-\beta\int_{0}^{t}e^{-\beta s}f(Z(s))\,\mathrm{d}s\\
=&f(Z(0-))+\int_{0}^{t}e^{-\beta s}\big[\Gamma f(Z(s))-\beta f(Z(s))\big]\,\mathrm{d}s\notag\\
&+\int_{0}^{t}\sigma e^{-\beta s}f'(Z(s))\,\mathrm{d}B(s)+\sum_{0\leqslant s\leqslant t}e^{-\beta s}(f(Z(s))-f(Z(s-))\notag\\
&+\frac{1}{2}\sum_{i=1}^{I}(f'(z_{i}+)-f'(z_{i}-))\int_{0}^{t}e^{-\beta s}\,\mathrm{d}L_{s}^{z_{i}}(Z).\notag
\end{align}
By \eqref{eq:E<infty}, we have
\[
\mathbb{E}_x\Big[\int_0^t \big(e^{-\beta s}f'(Z(s))\big)^2\,\mathrm{d}s\Big]\leqslant \mathbb{E}_x\Big[\int_0^t f'(Z(s))^2\,\mathrm{d}s\Big]<\infty,\quad \forall t\geqslant 0,
\]
then it follows from Theorem 3.2.1 in \cite{Oksendal2003} that
$\mathbb{E}_{x}[\int_{0}^{t}e^{-\beta s}f'(Z(s))\,\mathrm{d}B(s)]=0$.
Thus, taking expectation of \eqref{eq:lt} and using inequalities \eqref{eq:lowerbound-1}-\eqref{eq:lowerbound-2}, we have
\begin{align}
\label{eq:inequality}
\mathbb{E}_{x}\big[e^{-\beta t}f(Z(t))\big]
\geqslant& f(x)-\mathbb{E}_{x}\Big[\int_{0}^{t}e^{-\beta s}g(Z(s))\,\mathrm{d}s+\sum_{i=0}^{N(t)}e^{-\beta \tau_{i}}(K(\xi_{i})+k\cdot\xi_{i})\Big]\\
&+\frac{1}{2}\sum_{i=1}^{I}(f'(z_{i}+)-f'(z_{i}-))\mathbb{E}_{x}\Big[\int_{0}^{t}e^{-\beta s}\,\mathrm{d}L_{s}^{z_{i}}(Z)\Big],\notag
\end{align}
where we used \eqref{eq:lowerbound-2} in the form of
\begin{align*}
f(Z(s))-f(Z(s-))&=0 \quad\text{if} \quad s\notin \{\tau_0,\tau_1,\cdots,\tau_{N(t)}\},\\
f(Z(s))-f(Z(s-))
&\geqslant-K(\xi_i)-k\cdot\xi_i \quad \text{if} \quad s=\tau_i\in \{\tau_0,\tau_1,\cdots,\tau_{N(t)}\}.
\end{align*}
Since $L_{t}^{z_{i}}(Z),i=1,\cdots,I$, is a continuous, increasing process, we have
$\mathbb{E}_{x}[\int_{0}^{t}e^{-\beta s}\,\mathrm{d}L_{s}^{z_{i}}(Z)]\geqslant 0$,
and then for any policy in $\mathcal{P}^f$,
\[
\frac{1}{2}\sum_{z_{i}\in \mathcal{S}}(f'(z_{i}+)-f'(z_{i}-))\mathbb{E}_{x}\Big[\int_{0}^{t}e^{-\beta s}\,\mathrm{d}L_{s}^{z_{i}}(Z)\Big]\geqslant 0,
\]
which, together with \eqref{eq:inequality}, implies
\[
\mathbb{E}_{x}[e^{-\beta t}f(Z(t))]\geqslant f(x)-\mathbb{E}_{x}\Big[\int_{0}^{t}e^{-\beta s}g(Z(s))\,\mathrm{d}s+\sum_{i=0}^{N(t)}e^{-\beta \tau_{i}}(K(\xi_{i})+k\cdot\xi_{i})\Big].
\]
Letting $t$ go to infinity, we get
\[
\liminf_{t\to \infty }\mathbb{E}_{x}[e^{-\beta t}f(Z(t))]+{\sf DC}(x,\phi)\geqslant f(x).
\]
If $\liminf_{t\to \infty }\mathbb{E}_{x}[e^{-\beta t}f(Z(t))]\leqslant 0$, we have ${\sf DC}(x,\phi)\geqslant f(x)$;
otherwise,
\[
\liminf_{t\to \infty }\mathbb{E}_{x}[e^{-\beta t}f(Z(t))]>c\quad \text{for a strictly positive constant $c$},
\]
thus by inequality \eqref{eq:lim=0}, we further have
$\liminf_{t\to \infty}\mathbb{E}_{x}[e^{-\beta t}|f(Z(t))\mathbbm{1}_{\{Z(t)< 0)\}}|]>c$.
Then, there exists a sufficiently large $t_c$ such that
\[
\mathbb{E}_{x}[e^{-\beta t}|f(Z(t))\mathbbm{1}_{\{Z(t)<0\}}|]\geqslant\frac{1}{2}c\quad\text{for $t>t_c$}.
\]
Furthermore, by \eqref{eq:lowerbound-4} and the continuity of $f$, there exists a constant $b_0$ such that $\abs{f(z)}<a_{0}\abs{z}+b_0$ for $z\in(-\infty,0)$, thus
\begin{equation}
\label{eq:EZ>}
\mathbb{E}_{x}[\abs{Z(t)}]\geqslant\frac{ce^{\beta t}-2b_{0}}{2a_{0}}\quad\text{for $t>t_c$}.
\end{equation}
Furthermore, it follows from Assumption \ref{ass:h} (A1) and (A3) that there exist constants $a_1>0$ and $b_1\in\mathbb{R}$ such that
\[
g(x)\geqslant a_1\abs{x}-b_1\quad \text{for $x\in\mathbb{R}$},
\]
which yields
\begin{align*}
\mathbb{E}_{x}\Big[\int_{0}^{\infty}e^{-\beta t}g(Z(t))\,\mathrm{d}t\Big]
&=\int_{0}^{\infty}e^{-\beta t}\mathbb{E}_{x}[g(Z(t))]\,\mathrm{d}t\\
&\geqslant \int_{t_c}^{\infty}e^{-\beta t}\mathbb{E}_{x}[g(Z(t))]\,\mathrm{d}t\\
&\geqslant a_1 \int_{t_c}^{\infty}e^{-\beta t}\mathbb{E}_{x}[\abs{Z(t)}]\,\mathrm{d}t-\frac{b_1}{\beta}e^{-\beta t_c}\\
&=\infty,
\end{align*}
where the first equality holds by Tonelli's Theorem, and the last equality follows from \eqref{eq:EZ>}.
Therefore,  ${\sf DC}(x,\phi)=\infty$, and then ${\sf DC}(x,\phi)\geqslant f(x)$ holds for any $\phi\in \mathcal{P}^f$.
\end{proof}

\section{Proof of main results}
\label{sec:proof}

In this section, we prove our main results, Theorems \ref{thm1}, \ref{thm2}, \ref{thm3}, and \ref{thm4}.
Specifically, in Section \ref{sec:proof-thm1}, we prove Theorem \ref{thm1} by checking that the cost function $V_2(x)$ under $(s_2,S_2)$ policy satisfies all conditions in Corollary \ref{cor:lowerbound}; In Section \ref{sec:proof-thm2}, we prove Theorem \ref{thm2} by constructing a function $V_1(x)$ such that it satisfies all conditions in Corollary \ref{cor:lowerbound} as well as equals the cost under $(s_1,S_1)$ policy for $x\in[S_1-Q,\infty)$; In Section \ref{sec:proof-thm3}, we prove Theorem \ref{thm3} by comparing the cost under $(s_1,S_1)$ policy with the cost under $(s_1,\{S^*(x):x\leqslant s_1\})$ policy; In Section \ref{sec:proof-thm4}, we prove Theorem \ref{thm4} by checking that the cost function $\mathsf{DC}(x,(s_1,\{S^*(x):x\leqslant s_1\}))$ under $(s_1,\{S^*(x):x\leqslant s_1\})$ policy satisfies all conditions in Proposition \ref{prop:lowerbound}.

\subsection{The optimality of $(s_2,S_2)$ policy when $A_1^*\leqslant A_2^*$}
\label{sec:proof-thm1}

Before proving Theorem \ref{thm1}, we first give some useful properties.
\begin{lemma}\label{lem:A2}
When $A_1^*\leqslant A_2^*$, we must have
\begin{equation}
\label{eq:S-s}
S_1-s_1=Q\quad \text{and}\quad S_2-s_2>Q.
\end{equation}
\end{lemma}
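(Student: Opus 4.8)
The plan is to prove Lemma~\ref{lem:A2} by exploiting the definitions of $(s_1,S_1)$ and $(s_2,S_2)$ as solutions to the constrained optimization problems $\mathcal{OP}_1$ and $\mathcal{OP}_2$, and the relation between these two problems. The key observation is that the constraints in $\mathcal{OP}_1$ ($0<S-s\leqslant Q$) and $\mathcal{OP}_2$ ($S-s\geqslant Q$) overlap exactly on the boundary $S-s=Q$. First I would establish the first claim $S_1-s_1=Q$ by contradiction: suppose $S_1-s_1<Q$, so the constraint in $\mathcal{OP}_1$ is not tight. Then by Lemma~\ref{lem:sS-A}$(d)$ we have $v_1'(s_1)=v_1'(S_1)=-k$, meaning $(s_1,S_1)$ is an unconstrained local (hence, by the uniqueness in Lemma~\ref{lem:sS-A}$(c)$, global) maximizer of $A_1(s,S)$. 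But then, since $K_1<K_2$ and $A_i(s,S)$ depends on $K_i$ through the additive term $\tfrac{\sigma^2(\lambda_1+\lambda_2)}{2}(K_i+k(S-s))$ inside the bracket of \eqref{eq:A(s,S)}, and $e^{-\lambda_2 S}-e^{-\lambda_2 s}<0$ for $s<S$, one gets $A_2(s_1,S_1)<A_1(s_1,S_1)=A_1^*$. Since $(s_1,S_1)$ with $S_1-s_1<Q$ is also feasible for $\mathcal{OP}_2$ (which requires $S-s\geqslant Q$)—wait, it is \emph{not} feasible there. So instead the argument is: the point $(s_1,S_1)$ is not feasible for $\mathcal{OP}_2$, but I would argue that $A_2^*\leqslant A_1^*$ leads to a contradiction via a perturbation that stays feasible for both, or directly via the known fact (stated in the paragraph after Theorem~\ref{thm1}) that $A_1^*\leqslant A_2^*$ forces tightness.

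More carefully, the cleaner route is to compare $A_1^*$ and $A_2^*$ directly using the nested structure of constraints. Consider any feasible $(s,S)$ for $\mathcal{OP}_1$ with $S-s=Q$; it is also feasible for $\mathcal{OP}_2$, and at such a point $A_2(s,S)-A_1(s,S)=\tfrac{\lambda_2^2}{e^{-\lambda_2 S}-e^{-\lambda_2 s}}\cdot\tfrac{\sigma^2(\lambda_1+\lambda_2)}{2}(K_2-K_1)>0$ since $K_2>K_1$ and the prefactor $\lambda_2^2/(e^{-\lambda_2 S}-e^{-\lambda_2 s})<0$—hmm, that gives $A_2<A_1$ at the shared boundary, so this needs care with signs. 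Let me instead reason through the \emph{value} comparison: $A_1^*=\max_{0<S-s\leqslant Q}A_1(s,S)$ and the unconstrained maximizer of $A_1$ is achieved at some $(\hat s,\hat S)$. If $\hat S-\hat s\leqslant Q$ then $S_1-s_1=\hat S-\hat s$ and the constraint is slack; in that case I claim $A_1^*>A_2^*$, because $A_2^*=\max_{S-s\geqslant Q}A_2(s,S)$, and one can show $A_2(s,S)<A_1^*$ for every $(s,S)$ with $S-s\geqslant Q$ by splitting: for $S-s>Q$, compare with $A_1$ at a nearby point with gap exactly $Q$ using monotonicity of the relevant expressions; for $S-s=Q$, use the direct relation between $A_1$ and $A_2$ and the fact that $A_1(s,S)\leqslant A_1^*$. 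Thus $\hat S-\hat s\leqslant Q$ would contradict $A_1^*\leqslant A_2^*$, forcing $\hat S-\hat s>Q$, i.e., the constraint in $\mathcal{OP}_1$ is active, so $S_1-s_1=Q$.

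For the second claim $S_2-s_2>Q$, I would again argue by contradiction: suppose $S_2-s_2=Q$. Then $(s_2,S_2)$ is feasible for $\mathcal{OP}_1$ as well (since $0<S_2-s_2=Q\leqslant Q$), so $A_1(s_2,S_2)\leqslant A_1^*$. Using the sign-corrected relation between $A_1$ and $A_2$ at a point with gap exactly $Q$, together with $K_2>K_1$, I would derive $A_2^*=A_2(s_2,S_2)<A_1(s_2,S_2)\leqslant A_1^*$, contradicting the hypothesis $A_1^*\leqslant A_2^*$. Alternatively, if $S_2-s_2=Q$ were the active constraint for $\mathcal{OP}_2$, Lemma~\ref{lem:sS-A}$(e)$ gives $v_2'(s_2)=v_2'(S_2)\geqslant -k$; combined with the first part ($S_1-s_1=Q$, so $v_1'(s_1)=v_1'(S_1)\leqslant -k$ by Lemma~\ref{lem:sS-A}$(d)$) and the derivative formula \eqref{eq:vi-derivate}, one can locate the relative positions of $s_1,s_2$ and reach a contradiction with $A_1^*\leqslant A_2^*$ through the quasi-convexity in Remark~\ref{rem:vi}$(a)$.

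The main obstacle I anticipate is handling the sign bookkeeping in the comparison $A_1(s,S)$ versus $A_2(s,S)$ at points where $S-s>Q$ (not just at the shared boundary $S-s=Q$): there, $A_1$ is not even defined as part of its own constrained problem's optimum, so the comparison must go through an intermediate point with gap exactly $Q$ and requires a monotonicity argument showing that increasing the order quantity beyond $Q$ (while paying only $K_1$) can only help—i.e., that the unconstrained optimum of $A_1$ has gap $\geqslant Q$ precisely when $A_1^*\leqslant A_2^*$. Making this monotonicity rigorous, rather than just intuitively appealing to ``larger orders are cheaper per unit under a fixed setup cost,'' is the delicate part; I would route it through the KKT stationarity conditions \eqref{eq:KKTcondition-1}--\eqref{eq:KKTcondition-2} and the explicit second-order information encoded in Lemma~\ref{lem:sS-A}$(b)$ (strict quasi-convexity of $v_{A_i}'$), which pins down the unique stationary point and its location relative to $x_i^*$.
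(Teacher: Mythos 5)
Your treatment of the second claim ($S_2-s_2>Q$) is correct and is exactly the paper's argument: if $S_2-s_2=Q$ then $(s_2,S_2)$ is feasible for $\mathcal{OP}_1$, and since the prefactor $\lambda_2^2/(e^{-\lambda_2 S}-e^{-\lambda_2 s})$ in \eqref{eq:A(s,S)} is negative while $K_2>K_1$ enlarges the bracket, $A_2(s_2,S_2)<A_1(s_2,S_2)\leqslant A_1^*$, contradicting $A_1^*\leqslant A_2^*$. (Your momentary sign worry is unfounded: $A_2<A_1$ pointwise is the direction you need.)

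The first claim, however, has a genuine gap. Everything hinges on the assertion that $S_1-s_1<Q$ (constraint slack, hence $v_1'(s_1)=v_1'(S_1)=-k$) implies $(s_1,S_1)$ is the \emph{global} maximizer of the unconstrained problem $\max_{S-s>0}A_1(s,S)$; once you have that, the pointwise inequality $A_1>A_2$ gives $A_1^*>A_2^*$ and you are done. You justify this globality by ``the uniqueness in Lemma~\ref{lem:sS-A}$(c)$,'' but that uniqueness concerns solutions of the KKT system \eqref{eq:KKTcondition-1}--\eqref{eq:KKTcondition-6}, which includes the constraint $S-s\leqslant Q$; it says nothing about an unconstrained stationary point with gap strictly larger than $Q$ and a strictly larger value of $A_1$. (Note also that stationarity of $(s,S)\mapsto A_1(s,S)$ is the self-referential condition $v_{A_1(s,S)}'(s)=v_{A_1(s,S)}'(S)=-k$, so quasi-convexity of a single $v_A'$ does not by itself yield uniqueness of stationary points across different values of $A_1(s,S)$.) Your alternative route runs into the same wall: the step ``$\hat S-\hat s>Q$ forces the constraint to be active, so $S_1-s_1=Q$'' again requires excluding a slack-constraint stationary point coexisting with a better unconstrained maximizer. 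The paper closes exactly this hole with a separate contradiction argument that uses the hypothesis $A_1^*\leqslant A_2^*$ a \emph{second} time: if a better unconstrained maximizer $(\tilde s_1,\tilde S_1)$ with $\tilde S_1-\tilde s_1>Q$ existed, then $\tilde A_1=A_1(\tilde s_1,\tilde S_1)\geqslant A_1(s_2,S_2)>A_2(s_2,S_2)=A_2^*\geqslant A_1^*$, whence by \eqref{eq:vi-derivate} $v_1'<\tilde v_1'=-k$ at $\tilde s_1$ and $\tilde S_1$; strict quasi-convexity of $v_1'$ together with $v_1'(s_1)=v_1'(S_1)=-k$ then traps $(\tilde s_1,\tilde S_1)$ strictly inside $(s_1,S_1)$, giving $S_1-s_1>\tilde S_1-\tilde s_1>Q$ and contradicting $S_1-s_1<Q$. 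Your closing paragraph correctly identifies quasi-convexity of $v_{A_1}'$ as the relevant tool, but you do not supply this argument, and without it the first half of the lemma is not proved.
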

\begin{proof}
We first prove the first part of  \eqref{eq:S-s}, i.e., when $A_1^*\leqslant A_2^*$, we have $S_1-s_1=Q$.
Recall that $(s_1,S_1)$ is the unique maximizer of $\mathcal{OP}_1: \max_{0<S-s\leqslant Q}A_1(s,S)$, thus we must have $S_1-s_1\leqslant Q$.
Suppose $S_1-s_1<Q$, if we can claim that
\begin{equation}
\label{eq:suppose-1}
\text{$(s_1,S_1)$ is a maximizer of }\max_{S-s>0}A_1(s,S),
\end{equation}
then
\begin{align*}
A_1^*&=A_1(s_1,S_1)=\max_{S-s>0}A_1(s,S)
>\max_{S-s>0}A_2(s,S)\geqslant \max_{S-s\geqslant Q}A_2(s,S)=A_2(s_2,S_2)=A_2^*,
\end{align*}
where the first inequality follows from the definition of $A_i$ in \eqref{eq:A(s,S)} and $K_2>K_1$. This contradicts with $A_2^*\geqslant A_1^*$. Thus,  $S_1-s_1=Q$ holds when $A_2^*\geqslant A_1^*$.

Next we prove that \eqref{eq:suppose-1} holds under condition $S_1-s_1<Q$.
If \eqref{eq:suppose-1} is not true, there must exist a maximizer $(\tilde{s}_1,\tilde{S}_1)$ with $Q<\tilde{S}_1-\tilde{s}_1<\infty$ (the finiteness of $\tilde{s}$ and $\tilde{S}$ can be proved in a similar way used in the proof of Lemma \ref{lem:sS-A} (c)) such that $A_1(\tilde{s}_1,\tilde{S}_1)>A_1(s_1,S_1)$ and
\begin{equation}
\label{eq:tilde-A}
\frac{\partial A_1}{\partial s}(\tilde{s}_1,\tilde{S}_1)=\frac{\partial A_1}{\partial S}(\tilde{s}_1,\tilde{S}_1)=0.
\end{equation}
Denote $\tilde{A}_1=A_1(\tilde{s}_1,\tilde{S}_1)$ and define
\[
\tilde{v}_1(x)=\frac{2}{\sigma^2}\frac{1}{\lambda_1+\lambda_2}
\Big[\int_x^{\infty} e^{\lambda_1(x-y)}g(y)\,\mathrm{d}y +\int_0^x e^{-\lambda_2(x-y)}g(y)\,\mathrm{d}y-\frac{1}{\lambda_2^2}\tilde{A}_1 e^{-\lambda_2x}
\Big],\ x\in\mathbb{R}.
\]
Then \eqref{eq:tilde-A} can be rewritten as
\begin{equation}
\label{eq:tilde-v'=-k}
\tilde{v}_1'(\tilde{s}_1)=\tilde{v}_1'(\tilde{S}_1)=-k,
\end{equation}
and we have
\begin{equation}
\label{eq:tilde-A>A2}
\tilde{A}_1=A_1(\tilde{s}_1,\tilde{S}_1)\geqslant A_1(s_2,S_2)>A_2(s_2,S_2)=A_2^*\geqslant A_1^*,
\end{equation}
where the second inequality follows from that the definition of $A_i$ in \eqref{eq:A(s,S)} and  $K_2>K_1$.
It follows from \eqref{eq:tilde-A>A2} and \eqref{eq:vi-derivate} that
$\tilde{v}_1'(x)>v_1'(x)$ for $x\in\mathbb{R}$,
which, together with \eqref{eq:tilde-v'=-k}, yields that
\begin{equation}
\label{eq:v1-tilde-v1}
v_1'(\tilde{s}_1)<\tilde{v}_1'(\tilde{s}_1)=-k\quad\text{and}\quad v_1'(\tilde{S}_1)<\tilde{v}_1'(\tilde{S}_1)=-k.
\end{equation}
Recall part ($d$) of Lemma \ref{lem:sS-A} that when $S_1-s_1<Q$, we have
$v_1'(s_1)=v_1'(S_1)=-k$,
which, together with \eqref{eq:v1-tilde-v1} and the strict quasi-convexity of $v_1'$,
implies that
\[
S_1-s_1>\tilde{S}_1-\tilde{s}_1>Q.
\]
This contradicts with the condition $S_1-s_1<Q$. Therefore,  when $S_1-s_1<Q$, $(s_1,S_1)$ must be a maximizer of optimization problem  $\max_{S-s>0}A_1(s,S)$.

It remains to prove the second part of \eqref{eq:S-s}, i.e., $S_2-s_2>Q$. Suppose $S_2-s_2=Q$, we have
\[
A_1^*=A_1(s_1,S_1)=\max_{0<S-s\leqslant Q}A_1(s,S)>A_1(s_2,S_2)>A_2(s_2,S_2)=A_2^*,
\]
where the first inequality follows from $S_2-s_2=Q$, and the second inequality follows from the definition of $A_i$ in \eqref{eq:A(s,S)} and $K_2>K_1$. This contradicts with $A_2^*\geqslant A_1^*$, and thus we have $S_2-s_2>Q$.
\end{proof}

Now we are ready to prove Theorem \ref{thm1}.
\begin{proof}[Proof of Theorem \ref{thm1}]
The function $V_2(x)$ defined in \eqref{eq:V2} is the discounted cost for initial level $x$ under $(s_2,S_2)$ policy. If we can show that $V_2$ satisfies the conditions in Corollary \ref{cor:lowerbound} (i.e., all conditions in Proposition \ref{prop:lowerbound} as well as continuity of the first derivative), then
\begin{equation*}
\mathsf{DC}(x,\phi)\geqslant V_2(x)\quad \text{for any $x\in\mathbb{R}$ and $\phi\in\mathcal{P}$},
\end{equation*}
i.e., $V_2(x)$ is the optimal cost for initial level $x$ and $(s_2,S_2)$ policy is an optimal policy.
What remains is to check that $V_2$ satisfies all conditions in Corollary \ref{cor:lowerbound}.

First, the definitions of $V_2$ and $v_2$ imply
\[
V_2(s_2)=v_2(s_2)=v_2(S_2)+K_2+k(S_2-s_2)=\lim_{x\uparrow s_2}V_2(x),
\]
then $V_2$ is continuous at $s_2$ and thus is continuous in $\mathbb{R}$.  Further, part $(e)$ in Lemma \ref{lem:sS-A} and $S_2-s_2>Q$ in \eqref{eq:S-s} imply
\[
\lim_{x\downarrow s_2}V_2'(x)=v_2'(s_2)=-k=\lim_{x\uparrow s_2}V_2'(x),
\]
thus, $V_2'$ is continuous in $\mathbb{R}$. In addition, $V_2''$ is continuous except  at  $x=s_2$,  $V_2''(s_2-)=\lim_{x\rightarrow s_2-}V_2''(x)=0$, and $V_2''(s_2+)=\lim_{x\rightarrow s_2+}V_2''(x)=v_2''(s_2)$.
Therefore, it remains to check that $V_2$ satisfies conditions \eqref{eq:lowerbound-1}-\eqref{eq:lowerbound-4}.

Check condition \eqref{eq:lowerbound-1}.
For $x\in[s_2,\infty)$, the definitions of $V_2$ and $v_2$ imply that
\begin{equation}
\label{eq:Gamma=0}
\Gamma V_2(x)-\beta V_2(x)+g(x)=\Gamma v_2(x)-\beta v_2(x)+g(x)=0.
\end{equation}
For $x\in(-\infty,s_2)$, we have
\[
\Gamma V_2(x)-\beta V_2(x)+g(x)=\mu k-\beta\big(v_2(S_2)+K_2+k(S_2-x)\big)+g(x).
\]
Let $\Phi_1(x)=\mu k-\beta\big(v_2(S_2)+K_2+k(S_2-x)\big)+g(x)$, then for $x\in(-\infty,s_2)$,
\[
\Phi_1'(x)=\beta k+g'(x)<0,
\]
where the inequality follows from Lemma \ref{lem:sS-A} $(f)$ by noting $s_2<x_2^*$ and $S_2-s_2>Q$ (see \eqref{eq:S-s}).
Thus, we have that for $x\in(-\infty,s_2)$,
\begin{equation}
\label{eq:Gamma>Phi}
\Gamma V_2(x)-\beta V_2(x)+g(x)=\Phi_1(x)>\Phi_1(s_2).
\end{equation}
Note that taking $x=s_2$ in \eqref{eq:Gamma=0}, we have
\begin{equation}
\label{eq:0=Gamma}
0=\Gamma V_2(s_2)-\beta V_2(s_2)+g(s_2)=\frac{1}{2}v_2''(s_2)-\mu v_2'(s_2)-\beta v_2(s_2)+g(s_2)
=\frac{1}{2}v_2''(s_2)+\Phi_1(s_2).
\end{equation}
Furthermore,  the strict quasi-convexity of $v_2'$ (see Remark \ref{rem:vi} (a)) and $s_2<x_2^*$ implies
$v_2''(s_2)<0$,
which, together with \eqref{eq:Gamma>Phi}-\eqref{eq:0=Gamma}, yields
\[
\Gamma V_2(x)-\beta V_2(x)+g(x)>0.
\]
The verification of condition \eqref{eq:lowerbound-1} is completed.

We check condition \eqref{eq:lowerbound-2} in three cases: $x_1<x_2<s_2$, $s_2\leqslant x_1<x_2$, and $x_1<s_2\leqslant x_2$.

\noindent$\mathsf{Case\ 1:}$ If $x_1<x_2< s_2$, we have
\[
V_2(x_2)-V_2(x_1)=-k\cdot(x_2-x_1)>-K(x_2-x_1)-k\cdot(x_2-x_1),
\]
thus \eqref{eq:lowerbound-2} holds.

\noindent$\mathsf{Case\ 2:}$ If $s_2\leqslant x_1<x_2$, we have
\begin{equation}
\label{eq:bar-v2-minus}
V_2(x_2)-V_2(x_1)=v_2(x_2)-v_2(x_1)=\int_{x_1}^{x_2}[v_2'(y)+k]\,\mathrm{d}y-k(x_2-x_1).
\end{equation}
When $x_2-x_1>Q$, we have
\begin{align}
\label{eq:int-v2}
\int_{x_1}^{x_2}[v_2'(y)+k]\,\mathrm{d}y
\geqslant \int_{s_2}^{S_2} [v_2'(y)+k]\,\mathrm{d}y
=v_2(S_2)-v_2(s_2)+k(S_2-s_2)
=-K_2
=-K(x_2-x_1),
\end{align}
where the inequality follows from the quasi-convexity of $v_2'$ and $v_2'(s_2)=v_2'(S_2)=-k$ (see Lemma \ref{lem:sS-A} ($e$) and  \eqref{eq:S-s}), and the last equality follows from $x_2-x_1>Q$. Thus, \eqref{eq:bar-v2-minus}-\eqref{eq:int-v2} imply that when $x_2-x_1>Q$,
\begin{equation}
\label{eq:bar-v-geq1}
V_2(x_2)-V_2(x_1)\geqslant -K(x_2-x_1)-k(x_2-x_1).
\end{equation}
When $x_2-x_1\leqslant Q$,  it follows from \eqref{eq:bar-v2-minus} that
\begin{equation}
\label{eq:bar-v1-minus}
V_2(x_2)-V_2(x_1)=\int_{x_1}^{x_2}[v_2'(y)+k]\,\mathrm{d}y-k(x_2-x_1)
\geqslant \int_{x_1}^{x_2}[v_1'(y)+k]\,\mathrm{d}y-k(x_2-x_1),
\end{equation}
where the inequality follows from the definition of $v_i'$  in \eqref{eq:vi-derivate} and $A_1^*\leqslant A_2^*$. In this case, if we can further prove
\begin{equation}
\label{eq:appendix-proof2}
\int_{x_1}^{x_2}[v_1'(y)+k]\,\mathrm{d}y\geqslant  \int_{s_1}^{S_1}[v_1'(y)+k]\,\mathrm{d}y=-K_1,
\end{equation}
then \eqref{eq:bar-v1-minus}-\eqref{eq:appendix-proof2} imply
\begin{equation}
\label{eq:bar-v-geq2}
V_2(x_2)-V_2(x_1)\geqslant -K_1-k(x_2-x_1)=-K(x_2-x_1)-k(x_2-x_1),
\end{equation}
where the equality follows from $x_2-x_1\leqslant Q$. Therefore, it follows from \eqref{eq:bar-v-geq1} and \eqref{eq:bar-v-geq2} that when $x_1<x_2<s_2$, condition \eqref{eq:lowerbound-2} holds.

We next prove \eqref{eq:appendix-proof2}.
Since $0<x_2-x_1\leqslant Q$, it follows from the strict quasi-convexity of $v_1'$ that there exist $\alpha_1$ and $\alpha_2$ with $\alpha_1<\alpha_2$ such that
\begin{equation}
\label{eq:alpha12}
\alpha_2-\alpha_1=x_2-x_1\quad \text{and}\quad v_1'(\alpha_1)=v_1'(\alpha_2),
\end{equation}
and then
\begin{equation}
\label{eq:v1-alpha}
v_1'(x)
\begin{cases}
<v_1'(\alpha_1)& \text{for $x\in(\alpha_1,\alpha_2)$},\\
>v_1'(\alpha_1)& \text{for $x\in[\alpha_1,\alpha_2]^c$},
\end{cases}
\end{equation}
where $A^c$ denotes the complementary of set $A$ with respect to $\mathbb{R}$. Therefore,
\begin{align}
\label{eq:int>int-1}
\int_{x_1}^{x_2}[v_1'(y)+k]\,\mathrm{d}y
&=\int_{[x_{1},x_{2}]\cap[\alpha_1,\alpha_2]}[v_{1}'(y)+k]\,\mathrm{d}y
+\int_{[x_{1},x_{2}]\cap[\alpha_1,\alpha_2]^c}[v_{1}'(y)+k]\,\mathrm{d}y\\
&\geqslant \int_{[x_{1},x_{2}]\cap[\alpha_1,\alpha_2]}[v_{1}'(y)+k]\,\mathrm{d}y
+[v_{1}'(\alpha_1)+k]\nu([x_{1},x_{2}]\cap[\alpha_1,\alpha_2]^c)\nonumber\\\
&= \int_{[x_{1},x_{2}]\cap[\alpha_1,\alpha_2]}[v_{1}'(y)+k]\,\mathrm{d}y+[v_{1}'(\alpha_1)+k]\nu([\alpha_1,\alpha_2]\cap[x_{1},x_{2}]^c)\nonumber\\\
&\geqslant\int_{[\alpha_1,\alpha_2]\cap[x_{1},x_{2}]}[v_{1}'(y)+k]\,\mathrm{d}y
+\int_{[\alpha_1,\alpha_2]\cap[x_{1},x_{2}]^c}[v_{1}'(y)+k]\,\mathrm{d}y\nonumber\\\
&= \int_{\alpha_{1}}^{\alpha_{2}}[v_{1}'(y)+k]\,\mathrm{d}y,\nonumber
\end{align}
where both inequalities follow from \eqref{eq:v1-alpha} and $\nu(A)$ denotes the Lebesgue measure of set $A$, the second equality follows from $\nu([x_{1},x_{2}]\cap[\alpha_1,\alpha_2]^c)=\nu([\alpha_1,\alpha_2]\cap[x_{1},x_{2}]^c)$ (due to $\alpha_2-\alpha_1=x_2-x_1$).
Recall that $S_1-s_1=Q$ (see \eqref{eq:S-s}), it follows from the strict quasi-convexity of $v_1'$, the second part of \eqref{eq:alpha12}, and $v_1'(s_1)=v_1'(S_1)$ (Lemma \ref{lem:sS-A} ($d$)) that
\[
s_1\leqslant \alpha_1\leqslant\alpha_2\leqslant S_2\quad \text{and}\quad
v_1'(\alpha_1)=v_1'(\alpha_2)\leqslant v_1'(s_1)=v_1'(S_1)\leqslant -k,
\]
where the last inequality in the second part is from Lemma \ref{lem:sS-A} ($d$) for the case when $S_1-s_1=Q$.
Then,
\begin{align}
\int_{\alpha_{1}}^{\alpha_{2}}[v_{1}'(y)+k]\,\mathrm{d}y
\geqslant \int_{s_1}^{S_1}[v_{1}'(y)+k]\,\mathrm{d}y
=v_1(S_1)-v_1(s_1)-k\cdot(S_1-s_1)
=-K_1.\label{eq:int>int-2}
\end{align}
Hence, \eqref{eq:int>int-1}-\eqref{eq:int>int-2} imply \eqref{eq:appendix-proof2}.

\noindent$\mathsf{Case\ 3:}$ If $x_1<s_2\leqslant x_2$, we have
\begin{align*}
V_2(x_2)-V_2(x_1)
&=v_2(x_2)-v_2(s_2)+k\cdot(x_2-s_2)-k\cdot(x_2-x_1)\\
&\geqslant -K(x_2-s_2)-k\cdot(x_2-x_1)\\
&\geqslant -K(x_2-x_1)-k\cdot(x_2-x_1),
\end{align*}
where the first inequality holds because $v_2(x_2)-v_2(s_2)\geqslant-K(x_2-s_2)-k\cdot(x_2-s_2)$ holds from \eqref{eq:bar-v-geq1} and \eqref{eq:bar-v-geq2}, and the last inequality follows from that $K(\cdot)$ is an increasing function.

Finally, we check \eqref{eq:lowerbound-3}-\eqref{eq:lowerbound-4}.
It follows from the definition of $V_2$ in \eqref{eq:V2} and \eqref{eq:vi-derivate} that
\begin{align*}
V_2'(x)
&=
\begin{cases}
v_2'(x), & x\geqslant s_2,\\
-k, & x<s_2,
\end{cases}\\
&=
\begin{cases}
\frac{2}{\sigma^{2}} \frac{1}{\lambda_{1}+\lambda_{2}}\Big[\int_x^{\infty} e^{\lambda_1(x-y)}g'(y)\,\mathrm{d}y+\int_0^x e^{-\lambda_2(x-y)}g'(y)\,\mathrm{d}y+\frac{1}{\lambda_2} A_2^*e^{-\lambda_2 x}\Big], & x\geqslant s_2,\\
-k, & x<s_2,
\end{cases}
\end{align*}
where the expression of $v_2'(x)$ is obtained by integration by parts.
Since $g'$ is polynomially bounded (see Remark \ref{rem:h}), we can easily obtain \eqref{eq:lowerbound-3}-\eqref{eq:lowerbound-4}.
\end{proof}

\subsection{The optimality of $(s_1,S_1)$ policy for $x\in[S_1-Q,\infty)$ when $A_1^*>A_2^*$}
\label{sec:proof-thm2}

In this section, we aim to prove Theorem \ref{thm2}.
If we can construct a function, embodied by $V_1$, satisfies all conditions in Corollary \ref{cor:lowerbound}, then
\[
\mathsf{DC}(x,\phi)\geqslant V_1(x)\quad \text{for any $x\in\mathbb{R}$ and any $\phi\in\mathcal{P}$}.
\]
If $V_1$ further satisfies the following properties
\begin{equation}
\label{eq:V1-con2}
V_1(x)=\mathsf{DC}(x, (s_1,S_1)) \quad \text{if and only if $x\in[S_1-Q,\infty)$},
\end{equation}
then $\mathsf{DC}(x,\phi)\geqslant \mathsf{DC}(x, (s_1,S_1))$ for any $x\in[S_1-Q,\infty)$ and any $\phi\in\mathcal{P}$,
i.e., $(s_1,S_1)$ policy is optimal in  $\mathcal{P}$ for any initial level $x\in[S_1-Q,\infty)$.
Therefore, what remains is to construct the function $V_1$, and then to prove that it satisfies \eqref{eq:V1-con2} and all conditions in Corollary \ref{cor:lowerbound}.

First, we construct $V_1$. It follows from \eqref{eq:vi-derivate} and integration by parts that
\begin{equation}
\label{eq:v1'}
v_{1}'(x)=\frac{2}{\sigma^{2}} \frac{1}{\lambda_{1}+\lambda_{2}}\Big[\int_x^{\infty} e^{\lambda_1(x-y)}g'(y)\,\mathrm{d}y+\int_0^x e^{-\lambda_2(x-y)}g'(y)\,\mathrm{d}y+\frac{1}{\lambda_2} A_1^*e^{-\lambda_2 x}\Big],
\end{equation}
which implies that
\begin{align*}
\lim_{x\to-\infty}\frac{v_1'(x)}{e^{-\lambda_2 x}}
&=\frac{2}{\sigma^{2}} \frac{1}{\lambda_{1}+\lambda_{2}}\Big[ \lim_{x\to-\infty}\frac{\int_{x}^{\infty}e^{-\lambda_1 y}g'(y)\,\mathrm{d}y}{e^{-(\lambda_1+\lambda_2)x}}+ \frac{1}{\lambda_{2}}(A_1^*-\underline{A})\Big]\\
&=\frac{2}{\sigma^{2}} \frac{1}{\lambda_{1}+\lambda_{2}}\frac{1}{\lambda_{2}}
(A_1^*-\underline{A})\\
&>0,
\end{align*}
where the first equality follows from the definition of $\underline{A}$ in \eqref{eq:underline-A},  the second equality follows from
the polynomial boundedness of $g'$, and  the inequality follows from $A_1^*>\underline{A}$.
Therefore, we have
\begin{equation}
\label{eq:v'infty}
\lim_{x\to-\infty}v_1'(x)=\infty.
\end{equation}
It also follows from \eqref{eq:v1'} that
\begin{align}
\label{eq:lim-v1'}
\lim_{x\to\infty}v_{1}'(x)=\lim_{x\to\infty}\frac{g'(x)}{\beta}>0,
\end{align}
where the equality follows from the L'Hospital rule, and the inequality follows from the convexity of $g$ and $g'(x)>0$ for $x>0$.
Therefore, \eqref{eq:v'infty}-\eqref{eq:lim-v1'}, $v_1'(s_1)=v_1'(S_1)\leqslant -k$ (see Lemma \ref{lem:sS-A} ($d$)) and the quasi-convexity of $v_1'$ imply that
there exist unique $\bar{s}_1$ and $\bar{S}_1$ with $\bar{s}_1\leqslant s_1<S_1\leqslant \bar{S}_1$ such that
\begin{equation}
\label{eq:v1(bar-s)}
v_1'(\bar{s}_1)=v_1'(\bar{S}_1)=-k.
\end{equation}
Let
\begin{equation}
\label{eq:V1-def}
V_1(x)=
\begin{cases}
v_1(x), & \text{for $x\geqslant \bar{s}_1$},\\
v_1(\bar{s}_1)+k\cdot(\bar{s}_1-x), &\text{for $x<\bar{s}_1$}.
\end{cases}
\end{equation}

Next, we show that $V_1$ satisfies \eqref{eq:V1-con2}.  We prove this in two cases: $S_1-s_1<Q$ and $S_1-s_1=Q$.\\
\textsf{Case 1:} If $S_1-s_1<Q$, since $v_1'$ is strictly quasi-convex,
Lemma \ref{lem:sS-A} ($d$) and \eqref{eq:v1(bar-s)} imply that $s_1=\bar{s}_1$ and $S_1=\bar{S}_1$. Thus,
\begin{equation}
\label{eq:bar-V1-case1}
V_1(x)=
\begin{cases}
v_1(x), & \text{for $x\geqslant s_1$},\\
v_1(s_1)+k\cdot(s_1-x)=v_1(S_1)+K_1+k\cdot(S_1-x), &\text{for $x<s_1$},
\end{cases}
\end{equation}
where the equality in the case when $x<s_1$ follows from $v_1(s_1)=v_1(S_1)+K_1+k\cdot(S_1-s_1)$.
Furthermore, the discounted cost under $(s_1,S_1)$ policy is
\begin{equation}
\label{eq:DC-sS-1}
\mathsf{DC}(x, (s_1,S_1)) =
\begin{cases}
v_1(x), & \text{for $x\geqslant s_1$},\\
v_1(S_1)+K_1+k\cdot(S_1-x), &\text{for $S_1-Q\leqslant x<s_1$},\\
v_1(S_1)+K_2+k\cdot(S_1-x), &\text{for $x<S_1-Q$},
\end{cases}
\end{equation}
Comparing \eqref{eq:bar-V1-case1} with \eqref{eq:DC-sS-1}, we obtain that $V_1(x)=\mathsf{DC}(x, (s_1,S_1)) $ holds if and only if $x\in[S_1-Q,\infty)$.\\
\textsf{Case 2:} If $S_1-s_1=Q$, we have $v_1'(s_1)\leqslant -k$ and thus $\bar{s}_1\leqslant s_1$,
which implies that for $x\geqslant S_1-Q=s_1$
\[
V_1(x)=v_1(x)=\mathsf{DC}(x, (s_1,S_1)),
\]
and for $x<S_1-Q$,
\begin{align*}
V_1(x)&=v_1(s_1)+k\cdot(s_1-x)=v_1(S_1)+K_1+k\cdot(S_1-x)\\
&<v_1(S_1)+K_2+k\cdot(S_1-x)\\
&=\mathsf{DC}(x, (s_1,S_1)).
\end{align*}
Therefore, $V_1(x)=\mathsf{DC}(x, (s_1,S_1)) $ holds if and only if $x\in[S_1-Q,\infty)$.

Finally, we need to check that $V_1$ satisfies all conditions in Corollary \ref{cor:lowerbound}.
The proof of this part is very similar to that of Theorem \ref{thm1},
and thus it is put into the appendix; see Appendix \ref{app:supplement-thm2}.

\subsection{Proof of Theorem \ref{thm3}}
\label{sec:proof-thm3}

($a$) If $v_1'(s_1)=v_1'(S_1)=-k$, we need to show that for any $x\in(\underline{s},S_1-Q)$,
\begin{equation}
\label{eq:DC>DC}
\mathsf{DC}(x,(s_1,S_1))>\mathsf{DC}(x,(s_1,\{S^*(x):x\leqslant s_1\})).
\end{equation}
Recalling from \eqref{eq:S=S}, we have $\bar{S}=S_1$ in this case.  Then, we have that
for $x\in [\underline{s},S_1-Q)$,
\begin{align}
\label{eq:DC-}
&\mathsf{DC}(x,(s_1,S_1))-\mathsf{DC}(x,(s_1,\{S^*(x):x\leqslant s_1\})) \\
& \ \ \ =\big[v_1(S_1)+K_2+k\cdot(S_1-x)\big]-\big[v_1(x+Q)+K_1+kQ\big],\nonumber
\end{align}
where $\mathsf{DC}(x,(s_1,S_1))=v_1(S_1)+K_2+k\cdot(S_1-x)$ is due to $x<S_1-Q$, and
$\mathsf{DC}(x,(s_1,\{S^*(x):x\leqslant s_1\}))=v_1(x+Q)+K_1+kQ$ follows from \eqref{eq:underline-s<}-\eqref{eq:valfun}.
Denote $\Phi_2(x)=\big[v_1(S_1)+K_2+k\cdot(S_1-x)\big]-\big[v_1(x+Q)+K_1+kQ\big]$, we have
\begin{equation}
\label{eq:Psi}
\Phi_2(\underline{s})=0\quad \text{and}\quad \Phi_2'(x)=-v_1'(x+Q)-k>0\quad \text{for $x\in (\underline{s},S_1-Q)$},
\end{equation}
where $\Phi_2(\underline{s})=0$ is due to  the definition of $\underline{s}$ in \eqref{eq:underline-s} and $\bar{S}=S_1$, and the inequality follows from $v_1'(x+Q)<-k$ because $x+Q\in (\underline{s}+Q,S_1)\subset(s_1,S_1)$ (noting that $\underline{s}\geqslant s_1-Q$ from Lemma \ref{lem:bar-S} (b)).
Thus, \eqref{eq:DC-}-\eqref{eq:Psi} imply \eqref{eq:DC>DC}.

($b$) If $v_1'(s_1)=v_1'(S_1)<-k$,
when $x\in [\underline{s},S_1-Q)$ (this interval may be empty in this case), it is similar to \eqref{eq:Psi} that we also have
\begin{align*}
&\mathsf{DC}(x,(s_1,S_1))=\mathsf{DC}(x,(s_1,\{S^*(x):x\leqslant s_1\}))\quad \text{for $x=\underline{s}$},\\
&\mathsf{DC}(x,(s_1,S_1))>\mathsf{DC}(x,(s_1,\{S^*(x):x\leqslant s_1\}))\quad \text{for $x\in (\underline{s},S_1-Q)$}.
\end{align*}
When $x\in(-\infty,\underline{s})$, we have
\begin{align*}
\mathsf{DC}(x,(s_1,S_1))-\mathsf{DC}(x,(s_1,\{S^*(x):x\leqslant s_1\}))
&=\big[v_1(S_1)+K_2+k\cdot(S_1-x)\big]-\big[v_1(\bar{S})+K_2+k\cdot(\bar{S}-x)\big]\\
&=v_1(S_1)-v_1(\bar{S})+k\cdot(S_1-\bar{S})\\
&>0,
\end{align*}
where the inequality follows from $v_1'(x)+k<v_1'(\bar{S})+k=0$ for $x\in[S_1,\bar{S})$.
\qed

\subsection{The optimality of $(s_1,\{S^*(x):x\leqslant s_1\})$ policy for $x\in(-\infty,S_1-Q)$ when $A_1^*>A_2^*$}
\label{sec:proof-thm4}

In this section, we aim to prove Theorem \ref{thm4}.
Let
\begin{equation}
\label{eq:bar-V1}
\bar{V}_1(x)=\mathsf{DC}(x,(s_1,\{S^*(x):x\leqslant s_1\})),
\end{equation}
where $\mathsf{DC}(x,(s_1,\{S^*(x):x\leqslant s_1\}))$ is given in \eqref{eq:valfun}.
We will show that $\bar{V}_1(x)$ satisfies all the conditions specified in Proposition \ref{prop:lowerbound}.

First, we check the continuity of $\bar{V}_1$, $\bar{V}_1'$, and $\bar{V}_1''$, and prove $\mathcal{S}_{\bar{V}_1}=\{\underline{s}\}$ by considering two cases: $S_1-s_1<Q$ and $S_1-s_1=Q$. \\
\textsf{Case 1:} $S_1-s_1<Q$. It follows from the definition of $\bar{V}_1(x)$ and the definition of $\underline{s}$ in \eqref{eq:underline-s} that $\bar{V}_1(x)$ is continuous. Furthermore, it follows from $S_1-s_1<Q$ and Lemma \ref{lem:sS-A} ($d$) that
\begin{equation*}
v_1'(s_1)=v_1'(S_1)=-k,
\end{equation*}
which implies that $\bar{V}_1'$ is continuous at $s_1$ and $S_1-Q$, and thus it is continuous except at $\underline{s}$.
In addition,  $\bar{V}_1'(\underline{s}-)=\lim_{x\rightarrow \underline{s}-}\bar{V}_1'(x)=-k$ and $\bar{V}_1'(\underline{s}+)=\lim_{x\rightarrow \underline{s}+}\bar{V}_1'(x)=v_1'(\underline{s}+Q)$. Furthermore, $\bar{V}_1''$ is continuous except at $\{s_1,S_1-Q,\underline{s}\}$ and $\bar{V}_1''$ has finite left and right limits at these three points.
Note that $\bar{V}_1'$ is continuous except at $\underline{s}$, thus it is the unique possible point in $\mathcal{S}_{\bar{V}_1}$.
Further, it follows from \eqref{eq:valfun},  \eqref{eq:bar-V1} and the strict quasi-convexity of $v_1'$ that
\begin{equation}
\label{eq:bar-V1'}
\bar{V}_1'(\underline{s}+)=v_1'(\underline{s}+Q)< v_1'(s_1)=-k=\bar{V}_1'(\underline{s}-),
\end{equation}
which implies $\mathcal{S}_{\bar{V}_1}=\{\underline{s}\}$.\\
\textsf{Case 2:} $S_1-s_1=Q$. In this case, $(S_1-Q,s_1]$ is an empty set, and then
\[
\bar{V}_1(x)=
\begin{cases}
v_1(x), & \text{for $x\in(s_1,\infty)$},\\
v_1(x+Q)+K_1+kQ,  & \text{for $x\in[\underline{s},s_1]$},\\
v_1(\bar{S})+K_2+k(\bar{S}-x), & \text{for $x\in(-\infty,\underline{s})$}.
\end{cases}
\]
It follows from $v_1(s_1)=v_1(S_1)+K_1+k\cdot(S_1-s_1)=v_1(s_1+Q)+K_1+k Q$ and the definition of $\underline{s}$ that $\bar{V}_1$ is continuous in $\mathbb{R}$.
Furthermore, since $\bar{V}_1'(s_1+)=v_1'(s_1)=v_1'(S_1)=\bar{V}_1'(s_1-)$, we have that $\bar{V}_1'$ is continuous except at $\underline{s}$. In addition,
$\bar{V}_1'(\underline{s}-)=\lim_{x\rightarrow \underline{s}-}\bar{V}_1'(x)=-k$ and $\bar{V}_1'(\underline{s}+)=\lim_{x\rightarrow \underline{s}+}\bar{V}_1'(x)=v_1'(\underline{s}+Q)$. Furthermore, $\bar{V}_1''$ is continuous except at $\{s_1,\underline{s}\}$ and $\bar{V}_1''$ has finite left and right limits at these two points.
It is similar to \eqref{eq:bar-V1'} that we also have $\bar{V}_1'(\underline{s}+) <\bar{V}_1'(\underline{s}-)$, which implies $\mathcal{S}_{\bar{V}_1}=\{\underline{s}\}$.

It remains to check conditions \eqref{eq:lowerbound-1}-\eqref{eq:lowerbound-4}. The detailed proof is similar to that in Theorems \ref{thm1}-\ref{thm2}, and can be found in Appendix \ref{app:supplement-thm4}.
\qed

\section{The condition $\Xi(\underline{s})\geqslant0$ in Theorem \ref{thm4}}
\label{sec:condition-Q}

In this section, we analyze the condition $\Xi(\underline{s})\geqslant0$ in Theorem \ref{thm4} first
by some numerical studies and then prove that it always holds when the threshold $Q$ is large; see Example \ref{example:Q} and Propositions \ref{prop:condition-Q}.

Recalling the definition of $\underline{s}$ in \eqref{eq:underline-s}, we can then take $\Xi(\underline{s})$ as a function of $Q$.
\begin{example}
\label{example:Q}
In this example, we set $\mu=0.2$, $\sigma=0.6$, $\beta=0.01$, $k=0.85$, $K_{1}=4$, $K_{2}=7$. We set $g(x)=hx^+-px^-$ with $h=0.08$, $p=0.12$ in Table \ref{table:Q1}, and set $g(x)=\alpha x^2$ with $\alpha=0.01$ in Table \ref{table:Q2}.
It can be observed that when $Q$ is small (i.e., $Q=1,2$ in Table \ref{table:Q1} and $Q=1,2,3$ in Table \ref{table:Q2}), we get $A_1^*\leqslant A_2^*$, thus $(s_2,S_2)$ policy is optimal; When $Q$ becomes moderate (i.e., $Q=3$ in Table \ref{table:Q1} and $Q=4$ in Table \ref{table:Q2}), we have $A_1^*>A_2^*$ but $\Xi(\underline{s})<0$; When  $Q$ becomes large (i.e., $Q=4,\ldots,10$ in Table \ref{table:Q1} and $Q=5,\ldots,10$ in Table \ref{table:Q2}), we get $A_1^*>A_2^*$ and $\Xi(\underline{s})\geqslant0$. We also find that $\underline{s}$ is decreasing in $Q$ and  $\Xi(\underline{s})$ is strictly increasing in $Q$ when $Q$ becomes large.
In fact, the following proposition shows that there exists a $\underline{Q}$ such that
$\Xi(\underline{s})\geqslant 0$ for any $Q\in [\underline{Q},\infty)$ and $\Xi(\underline{s})$ is strictly increasing in $[\underline{Q},\infty)$.
\end{example}

\begin{table}[htbp]
 \caption{The value of $\Xi(\underline{s})$ varies with the threshold $Q$ when we set $g(x)=hx^+-px^-$ with $h=0.08$, $p=0.12$}{\label{table:Q1}}
 \centering
\resizebox{\textwidth}{!}{
 \begin{tabular}{cccccccccc}
  \toprule
$Q$&$s_1$& $S_1$& $A_1^{*}$& $s_2$&$S_2$& $A_2^*$& $\bar{S}$& $\underline{s}$&$\Xi(\underline{s})$\\
  \midrule
1&-1.3536&-0.3536&-0.0446&-4.4183&3.4704&-0.0201 &-&- &-\\
2&-1.7653&0.2347&-0.0251&-4.4183&3.4704&-0.0201 &-&- &-\\
3&-2.2178&0.7822&-0.0193&-4.4183&3.4704&-0.0201 &3.3042&-2.8178 &-0.0650\\
4&-2.6508&1.3492&-0.0170&-4.4183&3.4704&-0.0201 &2.8046&-4.3217 &0.1540\\
5&-3.0733&1.9267&-0.0160&-4.4183&3.4704&-0.0201 &2.5975&-5.5441 &0.3159\\
6&-3.4897&2.5103&-0.0157&-4.4183&3.4704&-0.0201 &2.5418&-6.6070 &0.4465\\
7&-3.5118&2.5416&-0.0157&-4.4183&3.4704&-0.0201 &2.5416&-7.6072 &0.5651\\
8&-3.5118&2.5416&-0.0157&-4.4639&3.5361&-0.0201 &2.5416&-8.6072 &0.6837\\
9&-3.5118&2.5416&-0.0157&-5.5255&3.4745&-0.0339 &2.5416&-9.6072 &0.8023\\
10&-3.5118&2.5416&-0.0157&-6.0177&3.9823&-0.0368 &2.5416&-10.6072 &0.9209\\
 \bottomrule
 \end{tabular}
}
\end{table}

\begin{table}[htbp]
 \caption{The value of $\Xi(\underline{s})$ varies with the threshold $Q$ when we set $g(x)=\alpha x^2$ with $\alpha=0.01$}{\label{table:Q2}}
 \centering
\resizebox{\textwidth}{!}{
 \begin{tabular}{cccccccccc}
  \toprule
$Q$&$s_1$& $S_1$& $A_1^{*}$& $s_2$&$S_2$& $A_2^*$& $\bar{S}$& $\underline{s}$&$\Xi(\underline{s})$\\
  \midrule
1&-3.7879&-2.7879&-0.0461&-6.4436&3.1352&-0.0194 &-&- &-\\
2&-3.2882&-1.2882&-0.0280&-6.4436&3.1352&-0.0194 &-&- &-\\
3&-3.4550&-0.4550&-0.0219&-6.4436&3.1352&-0.0194 &-&- &-\\
4&-3.7885&0.2115&-0.0190&-6.4436&3.1352&-0.0194 &3.0757&-5.4063 &-0.0197\\
5&-4.1887&0.8112&-0.0175&-6.4436&3.1352&-0.0194 &2.7504& -7.1070&0.2060\\
6&-4.6221&1.3779&-0.0166&-6.4436&3.1352&-0.0194 &2.5584&-8.5993 &0.4428\\
7&-5.0746&1.9254&-0.0162&-6.4436&3.1352&-0.0194 &2.4609&-9.8886 &0.6776\\
8&-5.5136&2.4345&-0.0160&-6.4436&3.1352&-0.0194 &2.4345&-10.9747 &0.8970\\
9&-5.5136&2.4345&-0.0160&-6.4436&3.1352&-0.0194 &2.4345&-11.9747 &1.1180\\
10&-5.5136&2.4345&-0.0160&-6.6377&3.3623&-0.0194 &2.4345&-12.9747 &1.3586\\
\bottomrule
\end{tabular}
}
\end{table}

\begin{proposition}
\label{prop:condition-Q}
There exists a $\underline{Q}\in(0,\infty)$ such that $\Xi(\underline{s})\geqslant0$ holds for any $Q\in[\underline{Q},\infty)$ and $\Xi(\underline{s})$ is strictly increasing in $[\underline{Q},\infty)$.
\end{proposition}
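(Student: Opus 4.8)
\emph{Overview and Step 1 (stabilization of the $\mathcal{M}_1$-data).} The plan is to show that once $Q$ is large every quantity entering $\Xi(\underline{s})$ either stabilizes or has a fully explicit dependence on $Q$, reducing $\Xi(\underline{s})$ to a transparent function of $Q$. Let $(\hat s_1,\hat S_1)$ be the finite maximizer of the \emph{unconstrained} problem $\max_{s<S}A_1(s,S)$ (its existence is classical, cf.\ \cite{benkherouf2007,Sulem1986}, or follows from the argument of Lemma \ref{lem:sS-A}$(c)$ with the constraint dropped), and put $\hat A_1:=A_1(\hat s_1,\hat S_1)$ and $Q_0:=\hat S_1-\hat s_1$. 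For every $Q>Q_0$ the constraint $S-s\leqslant Q$ in $\mathcal{OP}_1$ is inactive, so $(s_1,S_1)=(\hat s_1,\hat S_1)$ and $A_1^*=\hat A_1$; hence $s_1,S_1,A_1^*$ and the function $v_1$ (through \eqref{eq:v-Ai}) no longer depend on $Q$, and $S_1-s_1<Q$ together with Lemma \ref{lem:sS-A}$(d)$ gives $v_1'(s_1)=v_1'(S_1)=-k$. Because $e^{-\lambda_2 S}-e^{-\lambda_2 s}<0$ and $K_2>K_1$, formula \eqref{eq:A(s,S)} yields $A_2(s,S)<A_1(s,S)$ for all $s<S$, so with $(s_2,S_2)$ the finite maximizer of $\mathcal{OP}_2$ from Lemma \ref{lem:sS-A}$(c)$,
\[
A_2^*=A_2(s_2,S_2)<A_1(s_2,S_2)\leqslant\hat A_1=A_1^*\qquad\text{for every }Q>Q_0 .
\]
Thus $A_1^*>A_2^*$ throughout $(Q_0,\infty)$, so $\underline{s}$ and $\bar S$ are well-defined there, and \eqref{eq:S=S} gives $\bar S=S_1$, again independent of $Q$.

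\emph{Step 2 (the $Q$-invariant $y^\ast=\underline{s}+Q$).} With $\bar S=S_1$, the equation $\mathcal{H}(x)=0$ of Lemma \ref{lem:bar-S}$(b)$ rearranges to $v_1(x+Q)+k(x+Q)=v_1(S_1)+kS_1+(K_2-K_1)$. Put $\psi(y):=v_1(y)+ky$; by the strict quasiconvexity of $v_1'$ with $v_1'(s_1)=v_1'(S_1)=-k$ we have $\psi'(y)=v_1'(y)+k<0$ on $(s_1,S_1)$, while \eqref{eq:boundary-condition} gives $\psi(s_1)-\psi(S_1)=K_1$. Since $0<K_2-K_1\leqslant K_1$ by Assumption \ref{ass:K}, there is a unique $y^\ast\in[s_1,S_1)$ with $\psi(y^\ast)=\psi(S_1)+(K_2-K_1)$, and $y^\ast$ does not depend on $Q$; as $y^\ast-Q\in[s_1-Q,S_1-Q)$ solves $\mathcal{H}(\cdot)=0$, the uniqueness in Lemma \ref{lem:bar-S}$(b)$ forces $\underline{s}=y^\ast-Q$ for every $Q>Q_0$.

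\emph{Step 3 (monotonicity and sign).} Substituting $\bar S=S_1$ and $\underline{s}=y^\ast-Q$ into the definition of $\Xi$ gives, for $Q>Q_0$,
\[
\Xi(\underline{s})=\mu k+g(y^\ast-Q)-\beta\bigl(v_1(S_1)+K_2+k(S_1-y^\ast)\bigr)-\beta kQ ,
\]
hence
\[
\frac{\mathrm{d}}{\mathrm{d}Q}\Xi(\underline{s})=-g'(y^\ast-Q)-\beta k .
\]
As $g$ is convex, $g'(y^\ast-Q)$ is nonincreasing in $Q$ with limit $\lim_{x\to-\infty}g'(x)<-\beta k$ by Assumption \ref{ass:h}(A4) (see Remark \ref{rem:h}$(a)$); hence there is $Q_1>Q_0$ with $\mathrm{d}\Xi(\underline{s})/\mathrm{d}Q>0$ for all $Q\geqslant Q_1$, i.e.\ $\Xi(\underline{s})$ is strictly increasing on $[Q_1,\infty)$. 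Writing $x=y^\ast-Q$ we have $g(y^\ast-Q)-\beta kQ=g(x)+\beta kx-\beta ky^\ast$ and $\frac{\mathrm{d}}{\mathrm{d}x}\bigl(g(x)+\beta kx\bigr)=g'(x)+\beta k<0$ for $x$ sufficiently negative, so $g(x)+\beta kx\to+\infty$ as $x\to-\infty$ and therefore $\Xi(\underline{s})\to+\infty$ as $Q\to\infty$. Combining strict monotonicity on $[Q_1,\infty)$ with $\Xi(\underline{s})\to+\infty$ yields some $\underline{Q}\geqslant Q_1$ with $\Xi(\underline{s})\geqslant0$ on $[\underline{Q},\infty)$, where $\Xi(\underline{s})$ is also strictly increasing since $[\underline{Q},\infty)\subseteq[Q_1,\infty)$.

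\emph{Main obstacle.} I expect Step 1 to be the delicate part: one must argue rigorously that the $\mathcal{M}_1$-data genuinely becomes $Q$-free once the $\mathcal{OP}_1$-constraint is inactive and, above all, that $A_1^*>A_2^*$ — hence the very existence of $\underline{s},\bar S$ and the identity $\bar S=S_1$ — persists for \emph{all} $Q>Q_0$, not merely on a bounded range; the pointwise inequality $A_2(s,S)<A_1(s,S)$ is what makes this clean. Once this reduction is secured, isolating the $Q$-invariant $y^\ast=\underline{s}+Q$ and reading off the monotonicity and divergence of $\Xi(\underline{s})$ are short computations driven entirely by the convexity of $g$ and Assumption \ref{ass:h}(A4).
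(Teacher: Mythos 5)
Your proposal is correct and follows essentially the same route as the paper's proof: freeze the $\mathcal{M}_1$ data by passing to the unconstrained maximizer for $Q$ large (your $(\hat s_1,\hat S_1)$ is the paper's $(s_1^{\dagger},S_1^{\dagger})$), observe that $\underline{s}+Q$ is then a $Q$-invariant constant, and read off the monotonicity and divergence of $\Xi(\underline{s})$ from $g'+\beta k<0$. The only (harmless) differences are that you explicitly verify $A_1^*>A_2^*$ persists for all large $Q$ — a point the paper leaves implicit — and that you obtain the sign of $\mathrm{d}\Xi/\mathrm{d}Q$ from Assumption (A4) for $Q$ large, whereas the paper invokes Lemma \ref{lem:sS-A}$(f)$ to get strict monotonicity on all of $[Q^{\dagger},\infty)$; both suffice for the stated conclusion.
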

\begin{proof}[Proof] 
We consider the following unconstrained problem
\[
A_1^{\dagger}=\min_{s<S} A_1(s,S).
\]
Let $v_1^{\dagger}(x)=v_{A_1^{\dagger}}(x)$ for $x\in\mathbb{R}$.
It is very similar to Lemma \ref{lem:sS-A} that there exists unique finite $(s_1^{\dagger},S_1^{\dagger})$ such that $A_1^{\dagger}=A_1(s_1^{\dagger},S_1^{\dagger})$ and
\[
\frac{\mathrm{d} v_1^{\dagger}(s_1^{\dagger})}{\mathrm{d} x}
=\frac{\mathrm{d} v_1^{\dagger}(S_1^{\dagger})}{\mathrm{d} x}
=-k.
\]
Let $Q^{\dagger}=S_1^{\dagger}-s_1^{\dagger}$, then $(s_1^{\dagger},S_1^{\dagger})$ is a feasible solution of the constrained  problem $\min_{0<S-s\leqslant Q} A_1(s,S) $ for any $Q\geqslant Q^{\dagger}$,
and thus it is the optimal solution, i.e., for any $Q\geqslant Q^{\dagger}$,
\[
s_1=s_1^{\dagger}\quad  S_1=S_1^{\dagger},\quad\text{and}\quad v_1(x)=v_1^{\dagger}(x), \quad x\in\mathbb{R}.
\]
Therefore,
\begin{equation}
\label{eq:S1-constant}
\text{$S_1$ is a constant for any $Q\in[Q^{\dagger},\infty)$},
\end{equation}
and it follows from the definition of $\bar{S}$ in Lemma \ref{lem:bar-S} ($a$) and the strict quasi-convexity of $v_1$ that
\begin{equation}
\label{eq:bar-S=S1}
\bar{S}=S_1=S_1^{\dagger}, \quad \text{for any $Q\geqslant Q^{\dagger}$}.
\end{equation}
Recall that $\underline{s}$ is defined by \eqref{eq:underline-s},
which can be rewritten as
\begin{equation}
\label{eq:underline-s-Q}
v_1(\underline{s}+Q)+k\cdot(\underline{s}+Q)=v_1(S_1)+K_2+kS_1-K_1,
\end{equation}
where we used $\bar{S}=S_1$ in \eqref{eq:bar-S=S1}.
We have proven the existence of $\underline{s}$ in $[s_1-Q,S_1-Q]$, thus $\underline{s}+Q\in[s_1,S_1]$. Let
$\rho=\underline{s}+Q$.
In equation \eqref{eq:underline-s-Q}, the right-side is a constant for any $Q\in[Q^{\dagger},\infty)$, then the left-side is also a constant, i.e., $\rho=\underline{s}+Q$ is a fixed constant.
Thus, for any $Q\in[Q^{\dagger},\infty)$,
\begin{align*}
\Xi(\underline{s})
&=\mu k+g(\underline{s})-\beta \big(v_1(\bar{S})+K_2+k\cdot(\bar{S}-\underline{s})\big)\\
&=\mu k+g(\rho-Q)+\beta k(\rho-Q)-\beta \big(v_1(S_1)+K_2+k S_1\big),
\end{align*}
which, together with $g'(\rho-Q)+\beta k<0$ ($\rho-Q=\underline{s}\leqslant s_1<x_1^*$ and Lemma \ref{lem:sS-A} ($f$))  and \eqref{eq:S1-constant}, implies
that $\Xi(\underline{s})$ is strictly increasing in $Q\in[Q^{\dagger},\infty)$ and tends to $\infty$ as $Q\to\infty$, and then there exist a $\underline{Q}\in[Q^{\dagger},\infty)$ such that $\Xi(\underline{s})\geqslant0$ for $Q\in[\underline{Q},\infty)$.
\end{proof}





\section{Concluding remarks}
\label{sec:conclusion}

In this paper, we studied a continuous-review stochastic inventory model, in which the inventory process is modeled as a drifted Brownian motion, the holding and shortage cost function is convex, and a discontinuous quantity-dependent setup cost is incurred by each order.
The setup cost discontinuity makes the optimal policy more complicated under the discounted cost criterion. To completely characterize the optimal policy,
we first selected two ``good" $(s,S)$ policies, denoted by $(s_i,S_i)$ policy, $i=1,2$, which were obtained by solving two models. Namely,
 $(s_1,S_1)$ is determined by solving the Brownian inventory problem with the order quantity constraint bounded by $Q$ and constant setup cost $K_1$, and  $(s_2,S_2)$ is given by solving the Brownian inventory problem with the order quantity constraint larger than $Q$ and constant setup cost $K_2$.
We showed that $(s_2,S_2)$ policy is optimal for our problem when it is better than $(s_1,S_1)$ policy, otherwise, $(s_1,S_1)$ policy is optimal for initial level $x\in[S_1-Q,\infty)$ but is worse than the state-dependent policy, $(s_1,\{S^*(x):x\geqslant s_1\})$ for $x\in(-\infty,S_1-Q)$.
We further proved the optimality of $(s_1,\{S^*(x):x\geqslant s_1\})$ policy for $x\in(-\infty,S_1-Q)$ in a large class of admissible policies.
To prove the optimality of the selected policies, we established a generalized lower bound theorem to handle the discontinuity issue for the derivative of the cost function.

It is worth to mention that the convexity in Assumption \ref{ass:h} for holding/shortage cost function $g$ might be generalized to the weakly quasi-convex case (see \cite{YaoChaoWu2015}) or the case when the convexity holds only on $(-\infty,0)$ (see  \cite{BensoussanLiuSethi2005}). These generalizations would lead the characterization of optimal policy parameters to become more tedious.


One natural extension to our model is to consider a more general setup cost function with $N$ steps. The approach developed in this paper may still work but the analysis would be very tedious. In view of what we analyzed for the two-step case,  we conjecture that the similar results hold: some $(s,S)$ policies are optimal in some cases while they can be dominated by a more generalized state-dependent policy in other cases.
Further, the approach could apply to the inventory problems in which the demand processes are modeled as more general stochastic processes with continuous paths, e.g., the mean-reverting processes (cf.  \cite{CadenillasLaknerPinedo2010OR}) and general diffusions (cf.  \cite{HelmesStockbridgeZhu2015}), because the inventory level can be controlled into a desired region.
Finally, our approach provides a building block toward studying the models with jump processes, e.g., Poisson processes (cf. \cite{PresmanSethi2006}), compound Poisson plus diffusion processes (cf.  \cite{BenkheroufBensoussan2009} and \cite{BensoussanLiuSethi2005}), jump diffusions (cf. \cite{DavisGuoWu2010} and  \cite{Liu2018}), and general L\'{e}vy processes (cf. \cite{Yamazaki2017}).
For example, when the demand process is the mixture of compound Poisson processes and Brownian motions, even under a given policy, the inventory level at any ordering time could drop to any level like the initial level in this work.
However, it would become very difficult to obtain and compare the cost functions under different policies.

\section{Appendix}
\label{sec:appendix}

\subsection{Proof of Lemma \ref{lem:sS-A}}
\label{app:proof-lem-sS-A}
 ($a$) As the proof is slightly longer, we first outline its main procedures. To prove $\underline{A}<A_i^*<\bar A$,  we construct a finite $(s,S)$ policy for each $i$ such that its corresponding $A_i(s,S)$ defined in \eqref{eq:A(s,S)} is larger than $\underline{A}$, which, by the definition of $A_i^*$, implies $\underline{A}<A_i^*$. In order to show $A_i^*<\bar A$, for any finite $(s,S)$ policy, we decompose $A_i(s,S)$ defined in \eqref{eq:A(s,S)} into two parts, $\bar A$ and a function of $(s,S)$. Then  $A_i^*<\bar A$ is established by verifying the positivity of this function.

 Following the above outline, we now show that there exists a finite $(s,S)$ such that $A_i(s,S)>\underline{A}$, and thus $A_i^*>\underline{A}$.
For $s<S\leqslant 0$, using integration by parts, we can rewrite $A_i(s,S)$ defined in \eqref{eq:A(s,S)} as
\begin{align}
\label{eq:Ai(s,S)-re}
A_i(s,S)=
\frac{\lambda_2^2}{e^{-\lambda_2 S}-e^{-\lambda_2 s}}b_i(s,S)+\underline{A},
\end{align}
where
\begin{align}
\label{eq:b-i}
b_i(s,S)=a_i(s,S)+a_{\lambda_1}(s,S)-a_{\lambda_2}(s,S)
\end{align}
 with
\begin{align*}
a_i(s,S)=&
\frac{1}{2}\left(\sigma^2(\lambda_1+\lambda_2)
\left[K_i+k(S-s)+\frac{g(S)-g(s)}{\beta}\right]\right), \ i=1,2;\\
a_{\lambda_1}(s,S)=&\frac{1}{\lambda_1}\left(e^{\lambda_1S}\int_S^{\infty}e^{-\lambda_1y}g'(y)\,\mathrm{d}y
-e^{\lambda_1s}\int_s^{\infty}e^{-\lambda_1y}g'(y)\,\mathrm{d}y\right);
\\
a_{\lambda_2}(s,S)=&\frac{1}{\lambda_2}\left(e^{-\lambda_2S}\int^S_{-\infty}e^{\lambda_2y}g'(y)\,\mathrm{d}y
-e^{-\lambda_2s}\int^s_{-\infty}e^{\lambda_2y}g'(y)\,\mathrm{d}y\right),
\end{align*}
and $\underline{A}$ is defined in \eqref{eq:underline-A}.
Since $e^{-\lambda_2 S}-e^{-\lambda_2 s}<0$ for any $s<S$, we only need to prove that there exists a finite $(s,S)$ such that $b_i(s,S)<0$ for $i=1,2$.

First we consider $b_1(s,S)$. Since $S-s\in [0, Q]$, we just choose $S=s+Q$ and show that
\begin{align}
\label{eq:b1-0}
\mbox{there exists $s$ with $s<-Q$ such that} \ \ b_1(s,s+Q)<0.
\end{align}
Note that from the convexity of $g$, there are two possibilities for the limit of $g'(s)$ as $s\rightarrow -\infty$, namely,
\begin{align*}
\lim_{s\rightarrow -\infty}g'(s)>-\infty \ \ \mbox{or} \ \  \lim_{s\rightarrow -\infty}g'(s)=-\infty.
\end{align*}
In the following, the proof of \eqref{eq:b1-0} is given according to these two possible cases.

{\bf Case 1:} $\lim_{s\rightarrow -\infty}g'(s)>-\infty$. For the first term in (\ref{eq:b-i}) with $i=1$,
\begin{align}
\label{eq:a-1}
\lim_{s\rightarrow -\infty}a_1(s,s+Q)\leqslant \lim_{s\rightarrow -\infty}
\frac{\sigma^2(\lambda_1+\lambda_2)}{2}\left(K_1+kQ+g'(s+Q)\frac{Q}{\beta}\right)<0,
\end{align}
where the first inequality follows from the convexity of $g$, and the second one is from  $\lim_{s\rightarrow-\infty}[g'(s+Q)+\beta k]<-\beta K_1/Q$ given by Assumption \ref{ass:h} (A4).

For the second and third terms in (\ref{eq:b-i}),
\begin{align}
a_{\lambda_1}(s,s+Q)=&\frac{\int_{s+Q}^{\infty}e^{-\lambda_1y}g'(y)\,\mathrm{d}y
-e^{-\lambda Q}\int_s^{\infty}e^{-\lambda_1y}g'(y)\,\mathrm{d}y}{\lambda_1 e^{-\lambda_1(s+Q)}}\label{eq:a-lambda-1}\\
=&\frac{\int_{s+Q}^{\infty}e^{-\lambda_1y}g'(y)\,\mathrm{d}y
-\int_{s+Q}^{\infty}e^{-\lambda_1y}g'(y-Q)\,\mathrm{d}y}{\lambda_1 e^{-\lambda_1(s+Q)}};\nonumber\\
 a_{\lambda_2}(s,s+Q)=&\frac{\int^{s+Q}_{-\infty}e^{\lambda_2y}g'(y)\,\mathrm{d}y
-e^{\lambda_2Q}\int^s_{-\infty}e^{\lambda_2y}g'(y)\,\mathrm{d}y}{\lambda_2e^{\lambda_2(s+Q)}}\label{eq:a-lambda-2}\\
=&\frac{\int^{s+Q}_{-\infty}e^{\lambda_2y}g'(y)\,\mathrm{d}y -\int^{s+Q}_{-\infty}e^{\lambda_2y}g'(y-Q)\,\mathrm{d}y}{\lambda_2e^{\lambda_2(s+Q)}}.\nonumber
\end{align}
Let
\begin{align*}
c_1(s,s+Q)=&\frac{\int_{s+Q}^{\infty}e^{-\lambda_1y}g'(y)\,\mathrm{d}y}{\lambda_1 e^{-\lambda_1(s+Q)}}
-\frac{\int^{s+Q}_{-\infty}e^{\lambda_2y}g'(y)\,\mathrm{d}y }{\lambda_2e^{\lambda_2(s+Q)}};\\
c_2(s,s+Q)=&-\frac{\int_{s+Q}^{\infty}e^{-\lambda_1y}g'(y-Q)\,\mathrm{d}y}{\lambda_1 e^{-\lambda_1(s+Q)}}
+\frac{\int^{s+Q}_{-\infty}e^{\lambda_2y}g'(y-Q)\,\mathrm{d}y }{\lambda_2e^{\lambda_2(s+Q)}}.
\end{align*}
Then,
\begin{align}
\label{eq:other-exp}
a_{\lambda_1}(s,s+Q)-a_{\lambda_2}(s,s+Q)=c_1(s,s+Q)+c_2(s,s+Q).
\end{align}

By the L'Hospital Rule and denoting $-\frac{1}{\lambda_1^2}+\frac{1}{\lambda_2^2}$ by $\Delta$, we have
\begin{align}
\lim_{s\rightarrow -\infty}c_1(s,s+Q)=&\lim_{s\rightarrow -\infty}\left(-\Delta\right) \cdot g'(s+Q); \label{eq:diff-1}\\
\lim_{s\rightarrow -\infty}c_2(s,s+Q)=&\lim_{s\rightarrow -\infty}\Delta \cdot g'(s).\label{eq:diff-2}
\end{align}
When $\lim_{s\rightarrow -\infty}g'(s)>-\infty$, $\lim_{s\rightarrow -\infty}g'(s)=\lim_{s\rightarrow -\infty}g'(s+Q)$.
Therefore, it follows from \eqref{eq:a-1}, \eqref{eq:other-exp}-\eqref{eq:diff-2} that
\begin{align*}
\lim_{s\rightarrow -\infty}\left(a_1(s,s+Q)+a_{\lambda_1}(s,s+Q)-a_{\lambda_2}(s,s+Q)\right)<0.
\end{align*}
This makes \eqref{eq:b1-0} hold.

{\bf Case 2:} $\lim_{s\rightarrow -\infty}g'(s)=-\infty$.
In view of Assumption \ref{ass:h} (A3), $0<\frac{g'(s+Q)}{g'(s)}\leqslant 1$ for $s<-Q$.  First we have
\begin{align*}
&\limsup_{s\rightarrow -\infty}\frac{1}{g'(s+Q)}\left[\frac{\sigma^2(\lambda_1+\lambda_2)}{2}\left(K_1+kQ+g'(s+Q)\frac{Q}{\beta}\right)+c_1(s,s+Q)+c_2(s,s+Q)
\right]\\
& \ \ \ =\limsup_{s\rightarrow -\infty}\left[\frac{\sigma^2(\lambda_1+\lambda_2)}{2g'(s+Q)}(K_1+kQ)+\frac{\sigma^2(\lambda_1+\lambda_2)Q}{2\beta}+\frac{c_1(s,s+Q)}{g'(s+Q)}
+\frac{c_2(s,s+Q)}{g'(s)}\frac{g'(s)}{g'(s+Q)}\right]\\
& \ \ \ = \frac{\sigma^2(\lambda_1+\lambda_2)Q}{2\beta}-\Delta+\Delta \times \limsup_{s\rightarrow -\infty}\frac{g'(s)}{g'(s+Q)} \\
& \ \ \ \geqslant \frac{\sigma^2(\lambda_1+\lambda_2)Q}{2\beta}.
\end{align*}
Here the second equality is guaranteed by $\lim_{s\rightarrow -\infty} g'(s+Q)=-\infty$ and \eqref{eq:diff-1}-\eqref{eq:diff-2}.  With considering \eqref{eq:a-1}, the above equation also gives \eqref{eq:b1-0}.
Hence we complete the proof for the negativeness of $b_1(s,S)$.

Next consider $b_2(s,S)$. Since $S-s\in [Q, \infty)$, we just take $S=s+3Q$ and show that there exists $s$ with $s<-3Q$ such that
$b_2(s,s+3Q)<0$. For the first term in \eqref{eq:b-i} with $i=2$, similar to \eqref{eq:a-1},
\begin{align}
\label{eq:a-2}
\lim_{s\rightarrow-\infty}a_2(s,s+3Q)\leqslant &\lim_{s\rightarrow-\infty}
\frac{\sigma^2(\lambda_1+\lambda_2)}{2}\left(K_2+3kQ+g'(s+3Q)\frac{3Q}{\beta}\right)\\
<&\frac{\sigma^2(\lambda_1+\lambda_2)}{2}(K_2-3K_1)<0,\nonumber
\end{align}
where again the convexity of $g$ and Assumption \ref{ass:h} (A4) are applied in the first and second inequalities, respectively, and the last inequality follows from  $K_2\leqslant 2K_1$ in Assumption \ref{ass:K}.

Consider the second and third terms in \eqref{eq:b-i}, similar to \eqref{eq:a-lambda-1}-\eqref{eq:a-lambda-2},
\begin{align*}
a_{\lambda_1}(s,s+3Q)=&\frac{\int^\infty_{s+3Q}e^{-\lambda_1y} g'(y)\,\mathrm{d}y-
\int^\infty_{s+3Q}e^{-\lambda_1 y}g'(y-3Q)\,\mathrm{d} y}{\lambda_1 e^{-\lambda_1(s+3Q)}};\\
a_{\lambda_2}(s,s+3Q)=&\frac{\int_{-\infty}^{s+3Q}e^{\lambda_2 y}g'(y)\,\mathrm{d}y-\int_{-\infty}^{s+3Q}e^{\lambda_2y}g'(y-3Q)\,\mathrm{d}y}
{\lambda_2 e^{\lambda_2(s+3Q)}}.
\end{align*}
Then going along the same lines of the proof for the case $b_1(s,s+Q)$, we can prove that there exists $s$ with $s<-3Q$ such that $b_2(s,s+3Q)<0$.

Therefore, we have proved that there exists a finite $(s,S)$ such that $A_i(s,S)>\underline{A}$ and thus $A_i^*>\underline{A}$ for $i=1,2$.

We next prove $A_i(s,S)<\bar{A}$ for any $s<S$. We provide the details of the case when $s<S\leqslant0$, the proof of other cases is similar and thus is omitted. When $s<S\leqslant 0$, we can rewrite $A_i(s,S)$ as
\begin{align*}
A_i(s,S)=\frac{\lambda_2^2}{e^{-\lambda_2 S}-e^{-\lambda_2 s}}\Big[\frac{\sigma^2(\lambda_1+\lambda_2)}{2}(K_i+k(S-s))+r(s,S)\Big]+\bar{A},
\end{align*}
where
\begin{align*}
r(s,S)=&\Big(\frac{1}{\lambda_1}+\frac{1}{\lambda_2}\Big)(g(S)-g(s))
+\frac{1}{\lambda_1}\Big(e^{\lambda_1 S}\int_{S}^{\infty}e^{-\lambda_1 y}g'(y)\,\mathrm{d}y-e^{\lambda_1s}\int_{s}^{\infty}e^{-\lambda_1 y}g'(y)\,\mathrm{d}y\Big)\\
&+\frac{1}{\lambda_2}\Big(e^{-\lambda_2 S}\int_{S}^{0}e^{\lambda_2 y}g'(y)\,\mathrm{d}y-e^{-\lambda_2s}\int_{s}^{0}
e^{\lambda_2 y}g'(y)\,\mathrm{d}y\Big)\\
&-\frac{1}{\lambda_2^2}\Big(\lambda_1(e^{-\lambda_2S}-e^{-\lambda_2s})
\int_0^{\infty}e^{-\lambda_1y}g'(y)\,\mathrm{d}y\Big).
\end{align*}
If we can prove $r(s,S)\geqslant0$ for any $s<S\leqslant0$, since $e^{-\lambda_2 S}-e^{-\lambda_2 s}<0$ and
$\frac{1}{2}\sigma^2(\lambda_1+\lambda_2)(K_i+k(S-s))>0$, we must have $A_i(s,S)<\bar{A}$.
It remains to show $r(s,S)\geqslant0$ for any $s<S\leqslant0$.
We have
\begin{align}
\label{eq:dr/dS}
\frac{\partial r(s,S)}{\partial S}=e^{\lambda_1S}\int_{S}^{\infty}e^{-\lambda_1 y}g'(y)\,\mathrm{d}y -e^{-\lambda_2S}\int_{S}^{0}e^{\lambda_2 y}g'(y)\,\mathrm{d}y
+\frac{\lambda_1}{\lambda_2}e^{-\lambda_2S}
\int_0^{\infty}e^{-\lambda_1y}g'(y)\,\mathrm{d}y
\end{align}
and
\begin{align}
\label{eq:dr/dS-2}
\frac{\partial^2 r(s,S)}{\partial S^2}
=&\lambda_1e^{\lambda_1S}\int_{S}^{\infty}e^{-\lambda_1 y}g'(y)\,\mathrm{d}y+\lambda_2e^{-\lambda_2S}\int_{S}^{0}e^{\lambda_2 y}g'(y)\,\mathrm{d}y -\lambda_1e^{-\lambda_2S}
\int_0^{\infty}e^{-\lambda_1y}g'(y)\,\mathrm{d}y\\
=&\lambda_1\left[e^{\lambda_1S}-e^{-\lambda_2S}\right]\int_{0}^{\infty}e^{-\lambda_1 y}g'(y)\,\mathrm{d}y\nonumber\\
&+\lambda_1e^{\lambda_1S}\int_{S}^{0}e^{-\lambda_1 y}g'(y)\,\mathrm{d}y+\lambda_2e^{-\lambda_2S}\int_{S}^{0}e^{\lambda_2 y}g'(y)\,\mathrm{d}y\nonumber \\
\leqslant & 0,\nonumber
\end{align}
where the inequality follows from $S\in(s,0]$ and $g'(y)<0$ for $y<0$.
Furthermore, it follows from \eqref{eq:dr/dS} that
\[
\frac{\partial r(s,S)}{\partial S} \Big|_{S=0}=\frac{\lambda_1+\lambda_2}{\lambda_2}\int_0^{\infty}e^{-\lambda_1y}g'(y)\,\mathrm{d}y>0,
\]
which, together with \eqref{eq:dr/dS-2}, implies that
\[
\frac{\partial r(s,S)}{\partial S}>0\quad \text{for any $S\in(s,0]$}.
\]
Note that
\[
\lim_{S\to s}r(s,S)=0,
\]
thus we obtain $r(s,S)>0$ for any $s<S\leqslant0$.

($b$) As usual, the quasi-convexity of $v'_{A_i}(x)$ is established by considering its second order derivative. To this end,  using integration by parts, the definition of $v_{A_i}$ in \eqref{eq:v-Ai} implies that
\begin{align*}
v'_{A_i}(x)&=
\begin{cases}
\frac{1}{\beta} g^{\prime}(x)+\frac{2}{\sigma^{2}} \frac{1}{\lambda_{1}+\lambda_{2}} \cdot\big[\frac{1}{\lambda_{1}} \int_{x}^{\infty} e^{-\lambda_{1} y} g^{\prime \prime}(y) \,\mathrm{d}y \cdot e^{\lambda_{1} x}\\
\quad +\frac{1}{\lambda_{2}}\left(A_i-g^{\prime}(0+)-\int_{0}^{x} e^{\lambda_{2} y} g^{\prime \prime}(y) \,\mathrm{d}y\right) e^{-\lambda_{2} x}\big], & \text {for  $x\geqslant0$},\\[0.1in]
\frac{1}{\beta} g^{\prime}(x)+\frac{2}{\sigma^{2}} \frac{1}{\lambda_{1}+\lambda_{2}} \cdot\big[\frac{1}{\lambda_{1}}\left(g^{\prime}(0+)-g^{\prime}(0-)
+\int_{x}^{\infty} e^{-\lambda_{1} y} g^{\prime \prime}(y) \,\mathrm{d}y\right) e^{\lambda_{1} x} \\
\quad +\frac{1}{\lambda_{2}}\big(A_i-g^{\prime}(0-)+\int_{x}^{0} e^{\lambda_{2} y} g^{\prime \prime}(y) \,\mathrm{d}y\big) e^{-\lambda_{2} x}\big], & \text {for $x<0$},
\end{cases}
\end{align*}
\begin{align}
\label{eq:v''}
v''_{A_i}(x)&=
\begin{cases}
\frac{2}{\sigma^{2}} \frac{1}{\lambda_{1}+\lambda_{2}}\big[\int_{x}^{\infty} e^{-\lambda_{1} y} g^{\prime \prime}(y) \,\mathrm{d}y \cdot e^{\lambda_{1} x} \\
\quad-\big(A_i-g^{\prime}(0+)-\int_{0}^{x} e^{\lambda_{2} y} g^{\prime \prime}(y) \,\mathrm{d}y\big) e^{-\lambda_{2} x}\big], &\quad \text {for $x \geqslant 0$},\\[0.1in]
\frac{2}{\sigma^{2}} \frac{1}{\lambda_{1}+\lambda_{2}}\big[\big(g^{\prime}(0+)-g^{\prime}(0-)
+\int_{x}^{\infty} e^{-\lambda_{1} y} g^{\prime \prime}(y) \,\mathrm{d}y\big) e^{\lambda_{1} x} \\
\quad-\big(A_i-g^{\prime}(0-)+\int_{x}^{0} e^{\lambda_{2} y} g^{\prime \prime}(y) \,\mathrm{d}y\big) e^{-\lambda_{2} x}\big], &\quad \text {for  $x<0$},
\end{cases}
\end{align}
and
\begin{align}
\label{eq:v'''}
v'''_{A_i}(x)&=
\begin{cases}
\frac{2}{\sigma^{2}} \frac{1}{\lambda_{1}+\lambda_{2}}\big[\lambda_{1} \int_{x}^{\infty} e^{-\lambda_{1} y} g^{\prime \prime}(y) \,\mathrm{d}y \cdot e^{\lambda_{1} x} \\
\quad+\lambda_{2}\big(A_i-g^{\prime}(0+)-\int_{0}^{x} e^{\lambda_{2 y} y} g^{\prime \prime}(y) \,\mathrm{d}y\big) e^{-\lambda_{2} x}\big], &\quad \text {for $x \geqslant 0$},\\[0.1in]
\frac{2}{\sigma^{2}} \frac{1}{\lambda_{1}+\lambda_{2}}\big[\lambda_{1}
\big(g^{\prime}(0+)-g^{\prime}(0-)+\int_{x}^{\infty} e^{-\lambda_{1} y} g^{\prime \prime}(y) \,\mathrm{d}y\big) e^{\lambda_{1} x} \\
\quad+\lambda_{2}\big(A_i-g^{\prime}(0-)+\int_{x}^{0} e^{\lambda_{2} y} g^{\prime \prime}(y) \,\mathrm{d}y\big) e^{-\lambda_{2} x}\big], &\quad \text {for $x<0$}.
\end{cases}
\end{align}
Clearly, both $v_{A_i}'$ and $v_{A_i}''$ are continuous in $\mathbb{R}$.
When $A_i\in(\underline{A},\bar{A})$, we have that for any $x\geqslant 0$,
\begin{align}
\label{eq:v''>0}
v_{A_i}''(x)&> v''_{\bar{A}}(x)\\
&= \frac{2}{\sigma^{2}} \frac{1}{\lambda_{1}+\lambda_{2}}\left[\int_{x}^{\infty} e^{-\lambda_{1} y} g^{\prime \prime}(y) \,\mathrm{d}y \cdot e^{\lambda_{1} x}-\left(\int_{0}^{\infty}e^{-\lambda_1 y}g^{\prime\prime}(y)\,\mathrm{d}y-\int_{0}^{x} e^{\lambda_{2} y} g^{\prime \prime}(y) \,\mathrm{d}y\right) e^{-\lambda_{2} x}\right] \nonumber\\
&=\frac{2}{\sigma^{2}} \frac{1}{\lambda_{1}+\lambda_{2}}\left[\int_{x}^{\infty} e^{-\lambda_{1} y} g^{\prime \prime}(y) \,\mathrm{d}y \cdot\big( e^{\lambda_{1} x}-e^{-\lambda_2x}\big)+e^{-\lambda_2 x}\int_{0}^{x} \big(e^{\lambda_2 y}-e^{-\lambda_{1} y}\big) g^{\prime \prime}(y) \,\mathrm{d}y \right]\nonumber\\
&\geqslant0,\nonumber
\end{align}
where the first inequality follows from \eqref{eq:v''}, the second equality follows from the definition of $\bar{A}$ in \eqref{eq:bar-A}.
Furthermore, it follows from \eqref{eq:v''} that
\begin{align*}
\lim _{x \rightarrow-\infty} \frac{v''_{A_i}(x)}{e^{-\lambda_{2} x}}
&=\frac{2}{\sigma^{2}} \frac{1}{\lambda_{1}+\lambda_{2}} \lim _{x \rightarrow-\infty} \frac{g'(0+)-g'(0-)+\int_{x}^{\infty} e^{-\lambda_{1} y} g''(y) \,\mathrm{d}y}{e^{-(\lambda_{1}+\lambda_{2}) x}}-\frac{2}{\sigma^{2}} \frac{1}{\lambda_{1}+\lambda_{2}}(A_i-\underline{A})\\
&=\frac{2}{\sigma^{2}} \frac{1}{\lambda_{1}+\lambda_{2}} \lim _{x \rightarrow-\infty} \frac{\lambda_1\int_{x}^{\infty} e^{-\lambda_{1} y} g'(y) \,\mathrm{d}y-e^{-\lambda_1x}g'(x)}{e^{-(\lambda_{1}+\lambda_{2}) x}}-\frac{2}{\sigma^{2}} \frac{1}{\lambda_{1}+\lambda_{2}}(A_i-\underline{A}) \\
&=\frac{2}{\sigma^{2}} \frac{1}{\lambda_{1}+\lambda_{2}}(\underline{A}-A_i)\\
&< 0,
\end{align*}
where the second equality follows from integration by parts and the polynomially bounded property of $g'(x)$ in Remark \ref{rem:h} ($b$), and the third equality follows from the polynomially bounded property of $g'(x)$.
Thus,
\begin{equation}
\label{eq:lim-v''}
\lim _{x \rightarrow-\infty}v''_{A_i}(x)=-\infty.
\end{equation}
In addition, it follows from \eqref{eq:v'''} that for $x<0$,
\begin{align*}
v'''_{A_i}(x)
=&\frac{2}{\sigma^{2}} \frac{1}{\lambda_{1}+\lambda_{2}}\Bigg[\lambda_{1}
\left(g^{\prime}(0+)-g^{\prime}(0-)+\int_{x}^{\infty} e^{-\lambda_{1} y} g^{\prime \prime}(y) \,\mathrm{d}y\right) e^{\lambda_{1} x}\\
&+\lambda_{2}\left(A_i-g^{\prime}(0-)+\int_{x}^{0} e^{\lambda_{2} y} g^{\prime \prime}(y) \,\mathrm{d}y\right) e^{-\lambda_{2} x}\Bigg].
\end{align*}
If $A_i\in[g'(0-),\bar{A})$, we have
\[
v'''_{A_i}(x)>0\quad \text{for $x<0$},
\]
which, together with \eqref{eq:v''>0}-\eqref{eq:lim-v''}, implies that there exists a unique $x_{A_i}^*$ with $x_{A_i}^*<0$ satisfying
\begin{align*}
v_{A_i}''(x)
\begin{cases}
<0, & x<x_{A_i}^*,\\
=0, & x=x_{A_i}^*,\\
>0, & x>x_{A_i}^*,
\end{cases}
\end{align*}
i.e., the function $v_{A_i}'$ is strictly quasi-convex with a unique negative minimizer $x_{A_i}^*$.
If $A_i\in(\underline{A},g'(0-))$, since $A_i-g^{\prime}(0-)+\int_{x}^{0} e^{\lambda_{2} y} g^{\prime \prime}(y) \,\mathrm{d}y$ is decreasing  in $x\in(-\infty,0)$,
\begin{align*}
\lim_{x\to-\infty}\left(A_i-g^{\prime}(0-)+\int_{x}^{0} e^{\lambda_{2} y} g^{\prime \prime}(y) \,\mathrm{d}y\right)
&=A_i-\underline{A}>0
\quad \text{and}\\
\lim_{x\to0-}\left(A_i-g^{\prime}(0-)+\int_{x}^{0} e^{\lambda_{2} y} g^{\prime \prime}(y) \,\mathrm{d}y\right)
&=A_i-g^{\prime}(0-)<0,
\end{align*}
there exists an $x_0$ with $x_0<0$ such that
\begin{align*}
A_i-g^{\prime}(0-)+\int_{x}^{0} e^{\lambda_{2} y} g^{\prime \prime}(y) \,\mathrm{d}y&>0\quad \text{for $x<x_0$}\quad\text{and}\\
A_i-g^{\prime}(0-)+\int_{x}^{0} e^{\lambda_{2} y} g^{\prime \prime}(y) \,\mathrm{d}y&\leqslant0\quad \text{for $x_0\leqslant x<0$},
\end{align*}
which imply that
\begin{align}
v_{A_i}''(x)&\geqslant\frac{2}{\sigma^{2}} \frac{1}{\lambda_{1}+\lambda_{2}}\big(g^{\prime}(0+)-g^{\prime}(0-)
+\int_{x}^{\infty} e^{-\lambda_{1} y} g^{\prime \prime}(y) \,\mathrm{d}y\big) e^{\lambda_{1} x} >0\quad \text{for $x_0\leqslant x<0$
 and }\label{eq:v''>0-2}\\
v'''_{A_i}(x)&>0\quad \text{for $x<x_0$},\label{eq:v'''>0-2}
\end{align}
where the second inequality in \eqref{eq:v''>0-2} follows from that for any $x\leqslant 0$
\begin{align}
\label{eq:>0-convex}
g^{\prime}(0+)-g^{\prime}(0-)
+\int_{x}^{\infty} e^{-\lambda_{1} y} g^{\prime \prime}(y) \,\mathrm{d}y
\geqslant g^{\prime}(0+)-g^{\prime}(0-)
+\int_{0}^{\infty} e^{-\lambda_{1} y} g^{\prime \prime}(y) \,\mathrm{d}y>0
\end{align}
by noting that one of $g^{\prime}(0+)$ and $\int_{0}^{\infty} e^{-\lambda_{1} y} g^{\prime \prime}(y) \,\mathrm{d}y$ must be strictly larger than 0 (see Assumption \ref{ass:h}).
Therefore, it follows from \eqref{eq:v''>0}-\eqref{eq:v'''>0-2} that
there exists a unique $x_{A_i}^*$ with $x_{A_i}^*<x_0<0$ satisfying
\begin{align*}
v_{A_i}''(x)
\begin{cases}
<0, & x<x_{A_i}^*,\\
=0, & x=x_{A_i}^*,\\
>0, & x>x_{A_i}^*,
\end{cases}
\end{align*}
i.e., the function $v_{A_i}'$ is strictly quasi-convex with a unique negative minimizer $x_{A_i}^*$.

($c$)  We only prove the case when $i=1$, and the proof of the case when $i=2$ is similar and thus is omitted.
Note that $A_1^*\in(\underline{A},\bar{A})$ and $S-s\in(0,Q]$, thus if we can prove that
\begin{align}
\label{eq:lim-A1}
\lim_{s\to-\infty}A_1(s,S)=\underline{A},\quad \lim_{S\downarrow s}A_1(s,S)=-\infty,\quad \text{and}\quad \lim_{S\to\infty}A_1(s,S)= -\infty,
\end{align}
there must exist at least one finite local maximizer of $\mathcal{OP}_1$, i.e., there exists at least one finite solution, denoted by $(s_1,S_1)$, of the KKT conditions satisfying $A_1(s_1,S_1)\in(\underline{A},\bar{A})$.
We next prove \eqref{eq:lim-A1}. First, recall the expression of $A_i(s,S)$ in \eqref{eq:Ai(s,S)-re}, it is similar to the analysis in the proof of part ($a$), the polynomial boundness of $g'$ implies that
\[
\lim_{s\to-\infty}A_1(s,S)=\lim_{s\to-\infty}\frac{\lambda_2^2}{e^{-\lambda_2 S}-e^{-\lambda_2 s}}b_1(s,S)+\underline{A}=\underline{A},
\]
i.e., we obtain the first part in \eqref{eq:lim-A1}.
To prove the last two ones in \eqref{eq:lim-A1}, we rewrite $A_1(s,S)$ as
\begin{align*}
A_1(s,S)&=\frac{\lambda_2^2}{e^{-\lambda_2 S}-e^{-\lambda_2 s}}\Big[d_1(s,S)+\frac{\sigma^2(\lambda_1+\lambda_2)}{2}(K_1+k(S-s))\Big],
\end{align*}
with
\begin{align}
\label{eq:c1}
d_1(s,S)=&(g(S)-g(s))\Big(\frac{1}{\lambda_1}+\frac{1}{\lambda_2}\Big)
+\frac{1}{\lambda_1}\left(e^{\lambda_1S}\int_S^{\infty}e^{-\lambda_1y}g'(y)\,\mathrm{d}y
-e^{\lambda_1s}\int_s^{\infty}e^{-\lambda_1y}g'(y)\,\mathrm{d}y\right)\\
&-\frac{1}{\lambda_2}\left(e^{-\lambda_2S}\int^S_{0}e^{\lambda_2y}g'(y)\,\mathrm{d}y
-e^{-\lambda_2s}\int^s_{0}e^{\lambda_2y}g'(y)\,\mathrm{d}y\right).\nonumber
\end{align}
Since
\begin{equation}
\label{eq:lim-S}
\lim_{S\downarrow s}d_1(s,S)=0,
\end{equation}
we have $\lim_{S\downarrow s}A_1(s,S)=-\infty$, i.e., the second one in \eqref{eq:lim-A1} holds.
Furthermore, it follows from \eqref{eq:c1} that for $S>0$,
\[
\frac{\partial d_1(s,S)}{\partial S}=e^{\lambda_1 S}\int_S^{\infty} e^{-\lambda_1 y}g'(y)\,\mathrm{d}y
+e^{-\lambda_2 S}\int_0^{S} e^{\lambda_2 y}g'(y)\,\mathrm{d}y>0,
\]
which, together with \eqref{eq:lim-S}, implies that for any $0<s<S$,
\begin{equation}
\label{eq:c1>0}
d_1(s,S)>0.
\end{equation}
Note that when $S-s\in(0,Q]$,
\[
0\geqslant\lim_{S\to\infty}\left[e^{-\lambda_2 S}-e^{-\lambda_2 s}\right]=\lim_{S\to\infty}e^{-\lambda_2 S}\left[1-e^{\lambda_2 (S-s)}\right]\geqslant \lim_{S\to\infty}e^{-\lambda_2 S}\left[1-e^{\lambda_2 Q}\right]=0,
\]
and then $\lim_{S\to\infty}[e^{-\lambda_2 S}-e^{-\lambda_2 s}]=0$, which yields
\[
\lim_{S\to\infty}A_1(s,S)\leqslant \lim_{S\to\infty}\frac{\lambda_2^2}{e^{-\lambda_2 S}-e^{-\lambda_2 s}}\frac{\sigma^2(\lambda_1+\lambda_2)\big(K_1+k(S-s)\big)}{2}=-\infty,
\]
where the inequality follows from \eqref{eq:c1>0}. Therefore, we have proven the last one in  \eqref{eq:lim-A1}.

It remains to show the uniqueness of $(s_1,S_1)$.
It follows from the definition of $A_1$ in \eqref{eq:A(s,S)} and $v_{A_1}$ in \eqref{eq:v-Ai} that
\begin{align}
\frac{\partial A_1(s_1,S_1)}{\partial s}&=-\frac{\lambda_2^2}{e^{-\lambda_2 S_1}-e^{-\lambda_2s_1}}\frac{\sigma^2(\lambda_1+\lambda_2)}{2}\Big(v_{A_1(s_1,S_1)}'(s_1)+k\Big),\label{eq:dA1/ds}\\
\frac{\partial A_1(s_1,S_1)}{\partial S}&=\frac{\lambda_2^2}{e^{-\lambda_2 S_1}-e^{-\lambda_2s_1}}\frac{\sigma^2(\lambda_1+\lambda_2)}{2}\Big(v_{A_1(s_1,S_1)}'(S_1)+k\Big).\label{eq:dA1/dS}
\end{align}
Furthermore, the KKT conditions  \eqref{eq:KKTcondition-1}-\eqref{eq:KKTcondition-2} imply
\[
\frac{\partial A_1(s_1,S_1)}{\partial s}=-\frac{\partial A_1(s_1,S_1)}{\partial S},
\]
which, together with \eqref{eq:dA1/ds}-\eqref{eq:dA1/dS}, implies
\begin{equation}
\label{eq:v1=v1}
v_{A_1(s_1,S_1)}'(s_1)=v_{A_1(s_1,S_1)}'(S_1).
\end{equation}
Since $A_1(s_1,S_1)\in(\underline{A},\bar{A})$, it follows from part ($b$) that $v_{A_1(s_1,S_1)}'$ is strictly quasi-convex, which yields that there exists at most one pair $(s_1,S_1)$ satisfying \eqref{eq:v1=v1}, i.e.,
such $(s_1,S_1)$ is unique (because we have obtained the existence).

The fact $A_1^*\in(\underline{A},\bar{A})$ and the uniqueness of $(s_1,S_1)$ for $\mathcal{OP}_1$ such that local maximum $A_1(s_1,S_1)\in (\underline{A},\bar{A})$ imply that
$A_1^*=A_1(s_1,S_1)$ and $v_1(x)=v_{A_1^*}(x)=v_{A_1(s_1,S_1)}(x)$.

($d$) If $S_1-s_1<Q$, the complementary slackness condition \eqref{eq:KKTcondition-3} implies $\eta_1=0$, which together with
\eqref{eq:dA1/ds}-\eqref{eq:dA1/dS}, implies that \eqref{eq:KKTcondition-1}-\eqref{eq:KKTcondition-2} are equivalent to
\[
v_{A_1(s_1,S_1)}'(s_1)=v_{A_1(s_1,S_1)}'(S_1)=-k,
\]
which implies $v_1'(s_1) =v_1'(S_1)=-k$ by noting $A_1^*=A_1(s_1,S_1)$.

If $S_1-s_1=Q$, the complementary slackness condition \eqref{eq:KKTcondition-3} implies $\eta_1\geqslant0$, which together with
\eqref{eq:dA1/ds}-\eqref{eq:dA1/dS}, implies that \eqref{eq:KKTcondition-1}-\eqref{eq:KKTcondition-2} are equivalent to
\[
v_{A_1(s_1,S_1)}'(s_1)=v_{A_1(s_1,S_1)}'(S_1)\leqslant -k.
\]
i.e., $v_1'(s_1) =v_1'(S_1)\leqslant-k$.

($e$) The proof for this part is very similar to the one of ($d$) and thus is omitted.

$(f)$ It follows from part $(b)$ that $v_i''(x_i^*)=0$, which, together with \eqref{eq:v''}, implies that $x_i^*<0$ is the solution of equation
\begin{equation}\label{eq:xA*}
\Big(g^{\prime}(0+)-g^{\prime}(0-)
+\int_{x}^{\infty} e^{-\lambda_{1} y} g^{\prime \prime}(y) \,\mathrm{d}y\Big) e^{\lambda_{1} x}
=\Big(A_i^*-g^{\prime}(0-)+\int_{x}^{0} e^{\lambda_{2} y} g^{\prime \prime}(y)\,\mathrm{d}y\Big) e^{-\lambda_{2} x}.
\end{equation}
Furthermore, we have
\begin{align}
-k\geqslant& v'_1(s_1)\label{eq:g1xA*}\\
>&v'_1(x_1^*)\nonumber\\
=& \frac{1}{\beta} g^{\prime}(x_1^*)+\frac{2}{\sigma^{2}} \frac{1}{\lambda_{1}+\lambda_{2}} \Big[\frac{1}{\lambda_{1}}\Big(g^{\prime}(0+)-g^{\prime}(0-)
+\int_{x_1^*}^{\infty} e^{-\lambda_{1} y} g^{\prime \prime}(y) \,\mathrm{d}y\Big) e^{\lambda_{1} x_1^*}\nonumber\\
 &+\frac{1}{\lambda_{2}}\Big(A_1^*-g^{\prime}(0-)+\int_{x_1^*}^{0} e^{\lambda_{2} y} g^{\prime \prime}(y) \,\mathrm{d}y\Big) e^{-\lambda_{2} x_1^*}\Big]\nonumber\\
=& \frac{1}{\beta} g^{\prime}(x_1^*)+\frac{e^{\lambda_{1} x_1^*}}{\beta}
\Big[g^{\prime}(0+)-g^{\prime}(0-)
+\int_{x_1^*}^{\infty} e^{-\lambda_{1} y} g^{\prime \prime}(y) \,\mathrm{d}y\Big]\nonumber\\
>&\frac{1}{\beta} g^{\prime}(x_1^*)\nonumber,
\end{align}
where the first inequality follows from part ($d$), the second inequality follows from $s_1<x_1^*$ and that $v_1'$ is strictly quasi-convex with unique minimizer $x_1^*$,
the second  equality follows from \eqref{eq:xA*}, and the last inequality follows from \eqref{eq:>0-convex}.
Then,  \eqref{eq:g1xA*} and the convexity of $g(x)$ imply that
\begin{equation*}
g'(x)+\beta k\leqslant g'(x_1^*)+\beta k<0 \quad \text{for $x\leqslant x_1^*$.}
\end{equation*}
When $S_2-s_2>Q$, it follows from part $(e)$ that $-k=v_2'(s_2)$, then it is similar to \eqref{eq:g1xA*} that
\[
-k>\frac{1}{\beta} g^{\prime}(x_2^*),
\]
and then we have
\begin{equation*}
g'(x)+\beta k\leqslant g'(x_2^*)+\beta k<0 \quad \text{for $x\leqslant x_2^*$}.
\end{equation*}

\subsection{Proof of Lemma \ref{lem:bar-S}}
$(a)$ It can be concluded from Lemma \ref{lem:sS-A} $(d)$  and Remark \ref{rem:vi} $(a)$ that $v_1'(S_1)\leqslant -k$ and $v_1''(x)>0$ for $x>S_1>x_1^*$.  Thus, it follows from $\lim_{x\rightarrow +\infty}v_{1}'(x)>0$ (see \eqref{eq:lim-v1'}) that there exists a unique solution $\bar{S}\in[S_1,\infty)$ to equation $v_{1}'(x)=-k$.

$(b)$  Lemma \ref{lem:sS-A} ($d$) and Remark \ref{rem:vi} ($a$)  imply that
\begin{equation}
\label{eq:H'(x)<0}
\mathcal{H}'(x)=v_1'(x+Q)+k<0\quad \text{ for $x\in(s_1-Q,S_1-Q)$.}
\end{equation}
Thus, if we can prove that
\begin{equation}
\label{eq:delta}
\mathcal{H}(s_1-Q)\geqslant 0 \quad \text{and} \quad \mathcal{H}(S_1-Q)\leqslant 0,
\end{equation}
then there exists a unique point $\underline{s}$ in $[s_1-Q,S_1-Q]$, such that $\mathcal{H}(\underline{s})=0$.
We next prove \eqref{eq:delta} by two cases: $v_1'(s_1)=v_1'(S_1)=-k$ and $v_1'(s_1)=v_1'(S_1)<-k$.

\noindent\textsf{Case 1}: When $v_1'(s_1)=v_1'(S_1)=-k$, it follows from the definition of $\bar{S}$ in part (a) and $v_1''(x)>0$ for $x>S_1>x_1^*$ (Remark \ref{rem:vi} $(a)$) that $\bar{S}=S_1$, and then
\begin{align*}
\mathcal{H}(s_1-Q)
=v_1(s_1)-(v_1(S_1)+K_1+k(S_1-s_1))+2K_1-K_2
=2K_1-K_2
\geqslant0,
\end{align*}
where $v_1(s_1)=v_1(S_1)+K_1+k(S_1-s_1)$ in the last equality follows from $v_1(x)=v_1^{(s_1,S_1)}(x)$ for $x\geqslant s_1$  (see \eqref{eq:solution-vi}-\eqref{eq:v=v+K}, \eqref{eq:v-Ai}, and \eqref{eq:A-star}) and \eqref{eq:boundary-condition} with $s=s_1$ and $S=S_1$.
Furthermore, we have
\begin{align}
\label{eq:H(S1-Q)<0}
\mathcal{H}(S_1-Q)
=v_1(S_1)+K_1+kQ-(v_1(\bar{S})+K_2+k(\bar{S}-S_1+Q))
=K_1-K_2
<0.
\end{align}
Thus, \eqref{eq:delta} is proven. In addition, $\mathcal{H}(\underline{s})=0$, \eqref{eq:H'(x)<0}, and \eqref{eq:H(S1-Q)<0} imply \eqref{eq:underline-s<}.

\noindent\textsf{Case 2}: When $v_1'(s_1)=v_1'(S_1)<-k$,  it follows from the definition of $\bar{S}$ that $\bar{S}>S_1$ and
\begin{equation}
\label{eq:dv1<-k}
v_1'(x)<-k\quad \text{for any $x\in(S_1,\bar{S})$}.
\end{equation}
Thus,
\begin{align*}
\mathcal{H}(s_1-Q)
&=v_1(s_1)-v_1(\bar{S})+K_1-K_2-k(\bar{S}-s_1)\\
&=v_1(S_1)-v_1(\bar{S})-k(\bar{S}-S_1)+2K_1-K_2\\
&=(v_1'(\theta)+k)(S_1-\bar{S})+2K_1-K_2\\
&>0,
\end{align*}
where the second equality follows from $v_1(s_1)=v_1(S_1)+K_1+k(S_1-s_1)$, and the inequality follows from $2K_1\geqslant K_2$ and \eqref{eq:dv1<-k}.
Furthermore, it follows from Lemma \ref{lem:sS-A} ($d$) that $S_1-s_1= Q$ and then
\begin{align*}
\mathcal{H}(S_1-Q)=\mathcal{H}(s_1)=\big(v_1(S_1)+K_1+kQ\big)-\big(v_1(\bar{S})+K_2+k\cdot(\bar{S}-s_1)\big).
\end{align*}
Note that $v_1(S_1)+K_1+kQ$ is the discounted cost at $x=s_1$ under $(s_1,S_1)$ policy, i.e.,
\[
\mathsf{DC}(s_1,(s_1,S_1))=v_1(S_1)+K_1+k\cdot(S_1-s_1)=v_1(S_1)+K_1+kQ,
\]
while, $v_1(\bar{S})+K_2+k(\bar{S}-s_1)$ can be regarded as the discounted cost at level $s_1$ under an admissible policy $\phi$ that increases the inventory to level $\bar{S}$ (from level $s_1$) immediately at time 0 and then follows $(s_1,S_1)$ policy, i.e.,
\[
\mathsf{DC}(s_1,\phi)=v_1(\bar{S})+K_2+k(\bar{S}-s_1).
\]
It follows from Theorem \ref{thm2} that $\mathsf{DC}(s_1,(s_1,S_1))\leqslant \mathsf{DC}(s_1,\phi)$, and thus $\mathcal{H}(S_1-Q)=\mathsf{DC}(s_1,(s_1,S_1))-\mathsf{DC}(s_1,\phi)\leqslant 0$.
\qed

\subsection{Supplement of Section \ref{sec:proof-thm2}}
\label{app:supplement-thm2}

In this part, we prove that function $V_1$ defined in \eqref{eq:V1-def} satisfies all conditions in Corollary \ref{cor:lowerbound}.

First, the definition of $V_1$ and \eqref{eq:v1(bar-s)} imply that $V_1$ and $V_1'$ are continuous in whole $\mathbb{R}$, $V_1''$ is continuous except at $\bar{s}_1$,  $V_1''(\bar{s}_1-)=\lim_{x\rightarrow \bar{s}_1-}V_1''(x)=0$, and $V_1''(\bar{s}_1+)=\lim_{x\rightarrow \bar{s}_1+}V_1''(x)=v_1''(\bar{s}_1)$.

Check condition \eqref{eq:lowerbound-1}. The definitions of $V_1$ and $v_1$ imply that for $x\in[\bar{s}_1,\infty)$,
\begin{equation}
\label{eq:Gamma=0-V1}
\Gamma V_1(x)-\beta V_1(x)+g(x)=\Gamma v_1(x)-\beta v_1(x)+g(x)=0.
\end{equation}
For $x\in(-\infty,\bar{s}_1)$, we have
\[
\Gamma V_1(x)-\beta V_1(x)+g(x)=\mu k-\beta\big(v_1(\bar{s}_1)+k(\bar{s}_1-x)\big)+g(x).
\]
Let $\Phi_3(x)=\mu k-\beta\big(v_1(\bar{s}_1)+k(\bar{s}_1-x)\big)+g(x)$, then for $x<\bar{s}_1$,
\[
\Phi_3'(x)=g'(x)+\beta k<0,
\]
where the inequality follows from Lemma \ref{lem:sS-A} ($f$)
and the fact that $\bar{s}_1\leqslant s_1<x_1^*$.
Thus, for $x<\bar{s}_1$,
\begin{equation}
\label{eq:Gamma>Phi-2}
\Gamma V_1(x)-\beta V_1(x)+g(x)=\Phi_3(x)>\Phi_3(\bar{s}_1).
\end{equation}
Taking $x=\bar{s}_1$ in \eqref{eq:Gamma=0-V1}, we have
\[
0=\Gamma v_1(\bar{s}_1)-\beta v_1(\bar{s}_1)+g(\bar{s}_1)=\frac{1}{2}v_1''(\bar{s}_1)-\mu v_1'(\bar{s}_1)-\beta v_1(\bar{s}_1)+g(\bar{s}_1)=\frac{1}{2}v_1''(\bar{s}_1)+\Phi_3(\bar{s}_1),
\]
which, together with $v_1''(\bar{s}_1)<0$ (since $v_1'$ is strictly quasi-convex and $\bar{s}_1<x_1^*$), implies
$\Phi_3(\bar{s}_1)>0$.
Therefore, \eqref{eq:Gamma>Phi-2} implies that for $x<\bar{s}_1$,
$\Gamma V_1(x)-\beta V_1(x)+g(x)>0$.

Check condition \eqref{eq:lowerbound-2}. This is done by considering three cases: $x_1<x_2<\bar{s}_1$, $\bar{s}_1\leqslant x_1<x_2$, and $x_1<\bar{s}_1\leqslant x_2$.

\noindent$\mathsf{Case\ 1:}$ If $x_1<x_2<\bar{s}_1$, we have $V_1(x_2)-V_1(x_1)=-k\cdot(x_2-x_1)>-K(x_2-x_1)-k\cdot(x_2-x_1)$.

\noindent$\mathsf{Case\ 2:}$ If $\bar{s}_1\leqslant x_1<x_2$, we have
\begin{equation}
\label{eq:V1=}
V_1(x_2)-V_1(x_1)=v_1(x_2)-v_1(x_1)=\int_{x_1}^{x_2}[v_1'(y)+k]\,\mathrm{d}y-k\cdot (x_2-x_1).
\end{equation}

\noindent$\mathsf{Subcase\ 2.1:}$ When $S_1-s_1<Q$, we have $v_1'(s_1)=v_1'(S_1)=-k$ and then
\begin{align*}
\int_{x_1}^{x_2}[v_1'(y)+k]\,\mathrm{d}y&\geqslant \int_{s_1}^{S_1}[v_1'(y)+k]\,\mathrm{d}y\\
&=v_1(S_1)-v_1(s_1)+k\cdot(S_1-s_1)\\
&=-K_1\\
&\geqslant -K(x_2-x_1),
\end{align*}
which implies $V_1(x_2)-V_1(x_1)\geqslant -K(x_2-x_1)-k\cdot (x_2-x_1)$, i.e., condition \eqref{eq:lowerbound-2} holds.

\noindent$\mathsf{Subcase\ 2.2:}$ When $S_1-s_1=Q$, we have $v_1'(s_1)=v_1'(S_1)\leqslant -k$.
If $x_2-x_1\leqslant Q$, we have
\begin{align*}
\int_{x_1}^{x_2}[v_1'(y)+k]\,\mathrm{d}y
&\geqslant \int_{s_1}^{S_1}[v_1'(y)+k]\,\mathrm{d}y\\
&=v_1(S_1)-v_1(s_1)+k\cdot(S_1-s_1)\\
&=-K_1\\
&=-K(x_2-x_1),
\end{align*}
which, together with \eqref{eq:V1=}, implies $V_1(x_2)-V_1(x_1)\geqslant -K(x_2-x_1)-k\cdot (x_2-x_1)$.
Now consider the case when $x_2-x_1>Q$. We first prove that $S_1-s_1=Q$ implies
\begin{equation}
\label{eq:v2'=-k}
v_2'(s_2)=v_2'(S_2)=-k.
\end{equation}
It follows from Lemma \ref{lem:sS-A} ($d$) that $v_1'(s_1)=v_1'(S_1)\leqslant -k$,
which, together with $A_1^*>A_2^*$ and \eqref{eq:vi-derivate}, implies that
\[
v_2'(s_1)<-k\quad\text{and}\quad v_2'(S_1)<-k.
\]
Recall $v_2'(s_2)=v_2'(S_2)\geqslant-k$ (see Lemma \ref{lem:sS-A} ($e$)), the quasi-convexity of $v_2'$ implies $S_2-s_2>S_1-s_1=Q$, and then \eqref{eq:v2'=-k} holds.
Thus, we have
\begin{align}
\label{eq:int>-K}
\int_{x_1}^{x_2}[v_1'(y)+k]\,\mathrm{d}y
\geqslant  \int_{x_1}^{x_2}[v_2'(y)+k]\,\mathrm{d}y
\geqslant \int_{s_2}^{S_2}[v_2'(y)+k]\,\mathrm{d}y=-K_2=-K(x_2-x_1),
\end{align}
where the first inequality follows from $A_1^*>A_2^*$ and \eqref{eq:vi-derivate}, the second inequality follows from \eqref{eq:v2'=-k} and the quasi-convexity of $v_2'$, and the last equality follows from $x_2-x_1>Q$.
Therefore, \eqref{eq:V1=} and \eqref{eq:int>-K} imply condition \eqref{eq:lowerbound-2}.

\noindent$\mathsf{Case\ 3:}$ If $x_1<\bar{s}_1\leqslant x_2$, we have
\begin{align*}
V_1(x_2)-V_1(x_1)
&=v_1(x_2)-v_1(\bar{s}_1)+k\cdot(x_2-\bar{s}_1)-k\cdot(x_2-x_1)\\
&\geqslant-K(x_2-\bar{s}_1)-k\cdot(x_2-x_1)\\
&\geqslant-K(x_2-x_1)-k\cdot(x_2-x_1),
\end{align*}
where the first inequality follows from $\mathsf{Case\ 2}$, and the last inequality follows from that $K(\cdot)$ is increasing.

Finally, the proof of conditions \eqref{eq:lowerbound-3}-\eqref{eq:lowerbound-4} is very similar to that in the proof of Theorem \ref{thm1}, thus we omit the details.

\subsection{Supplement of Section \ref{sec:proof-thm4}}
\label{app:supplement-thm4}

In this part, we check that $\bar{V}_1$ defined in \eqref{eq:bar-V1} satisfies conditions \eqref{eq:lowerbound-1}-\eqref{eq:lowerbound-4}.

We first check condition \eqref{eq:lowerbound-1}. For $x\in[S_1-Q, \infty)$, since $\bar{V}_1(x)$ is the same as $V_1(x)$ defined in Theorem \ref{thm2}, condition \eqref{eq:lowerbound-1} has been proven in the proof of Theorem \ref{thm2}, and in particular,
\begin{align}
\label{eq:Gamma-barV1}
\Gamma \bar{V}_1(x)-\beta \bar{V}_1(x)+g(x)\geqslant0 \quad \text{for $x>S_1-Q$}.
\end{align}
We next check the cases when
$x\in [\underline{s},S_1-Q)$ and $x\in(-\infty,\underline{s})$.

\noindent\textsf{Case 1:} $x\in [\underline{s},S_1-Q)$. In this case, we have
\begin{align*}
&\Gamma \bar{V}_1(x)-\beta \bar{V}_1(x)+g(x)\\
&\quad=\big[\Gamma v_1(x+Q)-\beta v_1(x+Q)+g(x+Q)\big] +g(x)-g(x+Q)-\beta(K_1+kQ)\\
&\quad\geqslant  g(x)-g(x+Q)-\beta(K_1+kQ),
\end{align*}
where the inequality follows from \eqref{eq:Gamma-barV1} at $x+Q\geqslant \underline{s}+Q\geqslant s_1$.
Let $\Phi_4(x)=g(x)-g(x+Q)-\beta(K_1+kQ)$, we have $\Phi_4'(x)=g'(x)-g'(x+Q)\leqslant 0$ for $x\in [\underline{s},S_1-Q)$. If we can prove
\begin{equation}
\label{eq:Phi4}
\Phi_4(S_1-Q)>0,
\end{equation}
then we get $\Phi_4(x)\geqslant0$ for $x\in [\underline{s},S_1-Q)$, i.e., $\Gamma \bar{V}_1(x)-\beta \bar{V}_1(x)+g(x)\geqslant 0$ for $x\in [\underline{s},S_1-Q)$. We next prove \eqref{eq:Phi4}.  It follows from \eqref{eq:Gamma-barV1} that
\begin{align*}
0&\leqslant \Gamma \bar{V}_1((S_1-Q)+)-\beta \bar{V}_1(S_1-Q)+g(S_1-Q)\\
&=\mu k -\beta \big(v_1(S_1)+K_1+kQ\big)+g(S_1-Q)\\
&\leqslant -\mu v_1'(S_1)-\beta \big(v_1(S_1)+K_1+kQ\big)+g(S_1-Q),
\end{align*}
where the last inequality follows from $v_1'(S_1)\leqslant -k$. Thus, we have
\[
g(S_1-Q)-\beta \big(K_1+kQ\big)\geqslant \mu v_1'(S_1)+\beta v_1(S_1),
\]
which implies that
\begin{align*}
\Phi_4(S_1-Q)
&=g(S_1-Q)-g(S_1)-\beta(K_1+kQ)\\
&\geqslant \mu v_1'(S_1)+\beta v_1(S_1)-g(S_1)\\
&=\frac{1}{2}\sigma^2 v_1''(S_1)\\
&>0,
\end{align*}
where the last equality follows from $\Gamma v_1(S_1)-\beta v_1(S_1)+g(S_1)=0$ and the last inequality follows from the quasi-convexity of $v_1'$ and that $S_1$ is on the right of the minimum point.

\noindent\textsf{Case 2:} $x\in(-\infty,\underline{s})$. In this case, we have
\begin{align*}
\Gamma \bar{V}_1(x)-\beta \bar{V}_1(x)+g(x)=\mu k-\beta \big(v_1(\bar{S})+K_2+k\cdot(\bar{S}-x)\big)+g(x).
\end{align*}
Let $\Phi_5(x)=\mu k-\beta \big(v_1(\bar{S})+K_2+k\cdot(\bar{S}-x)\big)+g(x)$, we have
\[
\Phi_5(\underline{s}) =\Xi(\underline{s})\geqslant 0\quad \text{and}\quad
\Phi_5'(x)=\beta k+g'(x)<0 \quad \text{for $x<\underline{s}<0$},
\]
where the inequality in the first part follows from the assumption in Theorem \ref{thm4}, and the inequality in the second part follows from Lemma \ref{lem:sS-A} ($f$) and $x<\underline{s}\leqslant s_1<x_1^*<0$.
Thus, for $x\in(-\infty,\underline{s})$,
\[
\Gamma \bar{V}_1(x)-\beta \bar{V}_1(x)+g(x)=\Phi_5(x)>\Phi_5(\underline{s}) \geqslant 0.
\]

We next check condition \eqref{eq:lowerbound-2}.
According to the expression of $\bar{V}_1$ given by \eqref{eq:valfun} and \eqref{eq:bar-V1}, the verification of condition \eqref{eq:lowerbound-2} is divided into ten cases: 1) $s_1<x_1<x_2$, 2) $S_1-Q<x_1\leqslant s_1<x_2$, 3) $S_1-Q < x_1<x_2\leqslant s_1$, 4) $\underline{s}\leqslant x_1\leqslant S_1-Q<s_1<x_2$,
5) $x_1<\underline{s}<s_1<x_2$,  6) $\underline{s}\leqslant x_1\leqslant S_1-Q<x_2\leqslant s_1$, 7) $x_1<\underline{s}< S_1-Q<x_2\leqslant s_1$, 8) $\underline{s}\leqslant x_1<x_2 \leqslant S_1-Q$, 9)  $x_1<\underline{s}\leqslant x_2\leqslant S_1-Q$, and 10)  $x_1<x_2<\underline{s}$.

First note that for $x\in[S_1-Q, \infty)$,  $\bar{V}_1(x)$ is  the same as $V_1(x)$ defined in Theorem \ref{thm2}.
In \textsf{Cases 1), 2), and 3)}, we have $x_2>x_1\geqslant S_1-Q$, thus $\bar{V}_1(x_i)=V_1(x_i)$, $i=1,2$,
and then condition \eqref{eq:lowerbound-2} for these three cases has been proven in Theorem \ref{thm2}.
We next consider \textsf{Cases 4)-10)}.

\noindent \textsf{Case 4):} $\underline{s}\leqslant x_1\leqslant S_1-Q<s_1<x_2$. In this case, we have
\begin{align}
\label{eq:barV1-barV1}
\bar{V}_1(x_2)-\bar{V}_1(x_1)
&=v_1(x_2)-\big(v_1(x_1+Q)+K_1+kQ\big)\\
&=\int_{x_1+Q}^{x_2}[v_1'(y)+k]\,\mathrm{d}y-K_1-k\cdot(x_2-x_1).\nonumber
\end{align}
With considering the discontinuity of the setup cost at $Q$, this case is further divided into two subcases.
\noindent \textsf{Case 4.1):} $x_2\leqslant x_1+Q$. In this subcase, we have $K(x_2-x_1)=K_1$ and
\[
\int_{x_1+Q}^{x_2}[v_1'(y)+k]\,\mathrm{d}y=-\int_{x_2}^{x_1+Q}[v_1'(y)+k]\,\mathrm{d}y\geqslant 0,
\]
where the inequality follows from $s_1<x_2\leqslant x_1+Q\leqslant S_1$ and $v_1'(x)\leqslant -k$ for any $x\in[s_1,S_1]$.
Then,
\[
\bar{V}_1(x_2)-\bar{V}_1(x_1)\geqslant -K_1-k\cdot(x_2-x_1)=-K(x_2-x_1)-k\cdot(x_2-x_1).
\]
\noindent \textsf{Case 4.2):}  $x_2>x_1+Q$.  Since $s_1\leqslant \underline{s}+Q\leqslant x_1+Q\leqslant S_1\leqslant \bar{S}$, we have
\[
v_1'(y)
\begin{cases}
<-k, & \text{for $x\in(\underline{s}+Q,\bar{S})$},\\
=-k, & \text{for $x=\bar{S}$},\\
>-k, & \text{for $x\in(\bar{S},\infty)$},
\end{cases}
\]
which implies that
\begin{equation}
\label{eq:int-x1+Q}
\int_{x_1+Q}^{x_2}[v_1'(y)+k]\,\mathrm{d}y\geqslant \int_{\underline{s}+Q}^{\bar{S}}[v_1'(y)+k]\,\mathrm{d}y
=v_1(\bar{S})-v_1(\underline{s}+Q)+k\cdot (\bar{S}-\underline{s}-Q)
=K_1-K_2,
\end{equation}
where the last equality follows from the definition of $\underline{s}$ in \eqref{eq:underline-s}.
Therefore, \eqref{eq:barV1-barV1}-\eqref{eq:int-x1+Q} imply that
\[
\bar{V}_1(x_2)-\bar{V}_1(x_1)\geqslant -K_2-k\cdot(x_2-x_1)=-K(x_2-x_1)-k\cdot(x_2-x_1).
\]

\noindent \textsf{Case 5):} $x_1<\underline{s}<s_1<x_2$. In this case, we have
\begin{align*}
\bar{V}_1(x_2)-\bar{V}_1(x_1)
&=v_1(x_2)-\big(v_1(\bar{S})+K_2+
k\cdot(\bar{S}-x_1)\big)\\
&=\int_{\bar{S}}^{x_2}[v_1'(y)+k]
\,\mathrm{d}y-K_2-k\cdot(x_2-x_1).
\end{align*}
The rest analysis for this case is provided by three subcases according to the order relation of $x_2$ with $\bar{S}$, $\underline{s}+Q$ and  $s_1$.

\noindent \textsf{Case 5.1):} If $x_2\geqslant \bar{S}$, we have $x_2-x_1>\bar{S}-\underline{s}\geqslant S_1-\underline{s}\geqslant Q$, and $
\int_{\bar{S}}^{x_2}[v_1'(y)+k]
\,\mathrm{d}y\geqslant 0$ by $v_1'(y)\geqslant -k$ for $y\geqslant \bar{S}$. Hence, we have
\begin{align*}
\bar{V}_1(x_2)-\bar{V}_1(x_1)\geqslant -K_2-k\cdot(x_2-x_1)=-K(x_2-x_1)-k\cdot(x_2-x_1).
\end{align*}

\noindent \textsf{Case 5.2):} If $\underline{s}+Q<x_2<\bar{S}$, we have  $x_2-x_1>\underline{s}+Q-\underline{s}=Q$, and then
\begin{align*}
\bar{V}_1(x_2)-\bar{V}_1(x_1)
&=-\int^{\bar{S}}_{x_2}[v_1'(y)+k]
\,\mathrm{d}y-K_2-k\cdot(x_2-x_1)\\
&\geqslant-K_2-k\cdot(x_2-x_1)\\
&= -K(x_2-x_1)-k\cdot(x_2-x_1),
\end{align*}
where the inequality follows from that $v_1'(x)\leqslant -k$ for $x\in (\underline{s}+Q,\bar{S})\subset(s_1, \bar{S})$.

\noindent \textsf{Case 5.3):} If $s_1<x_2\leqslant \underline{s}+Q$,
we have
\begin{align*}
\bar{V}_1(x_2)-\bar{V}_1(x_1)
&=-\int^{\bar{S}}_{x_2}[v_1'(y)+k]
\,\mathrm{d}y-K_2-k\cdot(x_2-x_1)\\
&\geqslant -\int^{\bar{S}}_{\underline{s}+Q}[v_1'(y)+k]
\,\mathrm{d}y-K_2-k\cdot(x_2-x_1)\\
&=-(K_1-K_2)-K_2-k\cdot(x_2-x_1)\\
&=-K_1-k\cdot(x_2-x_1)\\
&\geqslant -K(x_2-x_1)-k\cdot(x_2-x_1),
\end{align*}
where the first inequality follows from $v_1'(x)\leqslant -k$ for any $x\in(s_1, \underline{s}+Q]$, the second equality follows from the definition of $\underline{s}$ in \eqref{eq:underline-s}, i.e.,
$\big(v_1(\underline{s}+Q)+K_1+kQ\big)-\big(v_1(\bar{S})+K_2+k\cdot(\bar{S}-\underline{s})\big)=0$.

\noindent \textsf{Case 6):} $\underline{s}\leqslant x_1\leqslant S_1-Q<x_2\leqslant s_1$. In this case, we have $x_1+Q\in[\underline{s}+Q,S_1]\subseteq[s_1,S_1]$ and $x_2-x_1\leqslant s_1-\underline{s}\leqslant Q$. Thus,
\begin{align*}
\bar{V}_1(x_2)-\bar{V}_1(x_1)
&=\big(v_1(S_1)+K_1+k\cdot(S_1-x_2)\big)-
\big(v_1(x_1+Q)+K_1+kQ\big)\\
&=\int^{S_1}_{x_1+Q}[v_1'(y)+k]
\,\mathrm{d}y-k\cdot(x_2-x_1)\\
&\geqslant \int^{S_1}_{s_1}[v_1'(y)+k]
\,\mathrm{d}y-k\cdot(x_2-x_1)\\
&=-K_1-k\cdot(x_2-x_1)\\
&=-K(x_2-x_1)-k\cdot(x_2-x_1),
\end{align*}
where the inequality follows from $v_1'(x)\leqslant -k$ for any $x\in[s_1,S_1]$.

\noindent \textsf{Case 7):} $x_1<\underline{s}< S_1-Q<x_2\leqslant s_1$. In this case, we have
\begin{align*}
\bar{V}_1(x_2)-\bar{V}_1(x_1)
&=\big(v_1(S_1)+K_1+k\cdot(S_1-x_2)\big)-\big(v_1(\bar{S})+K_2+k\cdot(\bar{S}-x_1)\big)\\
&=-\int_{S_1}^{\bar{S}}[v_1'(y)+k]\,\mathrm{d}y
+K_1-K_2-k\cdot(x_2-x_1)\\
&\geqslant -K_1-k\cdot(x_2-x_1)\\
&\geqslant-K(x_2-x_1)-k\cdot(x_2-x_1),
\end{align*}
where the first inequality follows from $v_1'(x)\leqslant -k$ for $x\in[S_1,\bar{S}]$
and $2K_1\geqslant K_2$.

\noindent \textsf{Case 8):} $\underline{s}\leqslant x_1<x_2 \leqslant S_1-Q$. In this case, we have $s_1\leqslant x_1+Q<x_2+Q\leqslant S_1$ and $x_2-x_1\leqslant S_1-s_1\leqslant Q$. Hence,
\begin{align*}
\bar{V}_1(x_2)-\bar{V}_1(x_1)
&=v_1(x_2+Q)+K_1+kQ-(v_1(x_1+Q)+K_1+kQ)\\
&=\int_{x_1+Q}^{x_2+Q}[v_1'(y)+k]
\,\mathrm{d}y-k\cdot(x_2-x_1)\\
&\geqslant\int_{s_1}^{S_1}[v_1'(y)+k]
\,\mathrm{d}y-k\cdot(x_2-x_1)\\
&=-K_1-k\cdot(x_2-x_1)\\
&=-K(x_2-x_1)-k\cdot(x_2-x_1),
\end{align*}
where the inequality follows from $v_1'(x)\leqslant -k$ for $x\in[s_1,S_1]$.

\noindent \textsf{Case 9):} $x_1<\underline{s}\leqslant x_2\leqslant S_1-Q$. In this case, we have $s_1\leqslant x_2+Q\leqslant S_1\leqslant \bar{S}$. Hence,
\begin{align*}
\bar{V}_1(x_2)-\bar{V}_1(x_1)
&=\big(v_1(x_2+Q)+K_1+kQ\big)-\big(v_1(\bar{S})+K_2+k\cdot(\bar{S}-x_1)\big)\\
&=-\int_{x_2+Q}^{\bar{S}}[v_1'(y)+k]
\,\mathrm{d}y+K_1-K_2-k\cdot(x_2-x_1)\\
&\geqslant K_1-K_2-k\cdot(x_2-x_1)\\
&\geqslant -K_1-k\cdot(x_2-x_1)\\
&\geqslant-K(x_2-x_1)-k\cdot(x_2-x_1),
\end{align*}
where the first inequality holds since $x_2+Q\in [s_1, \bar{S}]$ and $v_1'(x)\leqslant -k$ for $x\in[s_1,\bar{S}]$, the second inequality follows from $2K_1\geqslant K_2$.

\noindent \textsf{Case 10):} $x_1<x_2<\underline{s}$. In this case, we have
\begin{align*}
\bar{V}_1(x_2)-\bar{V}_1(x_1)
&=\big(v_1(\bar{S})+K_2+k\cdot(\bar{S}-x_2)\big)-\big(v_1(\bar{S})+K_2+k\cdot(\bar{S}-x_1)\big)\\
&=-k\cdot(x_2-x_1)\\
&>-K(x_2-x_1)-k\cdot(x_2-x_1).
\end{align*}

Combining \textsf{Cases 1)--10)}, we know that $\bar{V}_1$ defined in \eqref{eq:bar-V1} satisfies condition \eqref{eq:lowerbound-2}.

Finally, we prove that conditions \eqref{eq:lowerbound-3}-\eqref{eq:lowerbound-4} hold for $\bar{V}_1$ defined in \eqref{eq:bar-V1}.
This directly follows from
\begin{align*}
 \bar{V}_1'(x)=
\begin{cases}
\frac{1}{\beta} g^{\prime}(x)+\frac{2}{\sigma^{2}} \frac{1}{\lambda_{1}+\lambda_{2}} \cdot\big[\frac{1}{\lambda_{1}} \int_{x}^{\infty} e^{-\lambda_{1} y} g^{\prime \prime}(y)\,\mathrm{d}y \cdot e^{\lambda_{1} x}\\
\quad+\frac{1}{\lambda_{2}}\left(A_1^*-g^{\prime}(0+)-\int_{0}^{x} e^{\lambda_{2} y} g^{\prime \prime}(y) \,\mathrm{d}y\right) e^{-\lambda_{2} x}\big], & \text { for } x\geqslant0,\\
\frac{1}{\beta} g^{\prime}(x)+\frac{2}{\sigma^{2}} \frac{1}{\lambda_{1}+\lambda_{2}} \cdot\big[\frac{1}{\lambda_{1}}\left(g^{\prime}(0+)-g^{\prime}(0-)
+\int_{x}^{\infty} e^{-\lambda_{1} y} g^{\prime \prime}(y) \,\mathrm{d}y\right) e^{\lambda_{1} x} \\
\quad+\frac{1}{\lambda_{2}}\big(A_1^*-g^{\prime}(0-)+\int_{x}^{0} e^{\lambda_{2} y} g^{\prime \prime}(y) \,\mathrm{d}y\big) e^{-\lambda_{2} x}\big],  &\text { for } x\in(s_1,0),\\
 -k, &\text { for } x\in(S_1-Q,s_1],\\
 v_1'(x+Q), &\text { for } x\in [\underline{s},S_1-Q],\\
 -k, &\text { for } x\in(-\infty,\underline{s}),
\end{cases}
\end{align*}
which is given by  \eqref{eq:vi-derivate} and \eqref{eq:valfun}.

Therefore, we have completed the verification of conditions \eqref{eq:lowerbound-1}-\eqref{eq:lowerbound-4}
for $\bar{V}_1$ defined in \eqref{eq:bar-V1}. \qed

\section*{Acknowledgments}
The authors thank the anonymous
associate editor, and the referees for their valuable suggestions
and corrections.

\bibliographystyle{ormsv080}
\bibliography{refs}
\end{document}